\def\eqref#1{equation~\ref{#1}}
\def\floor#1{\lfloor #1 \rfloor}
\def\1{\bm{1}}
\DeclareMathAlphabet{\mathsfit}{\encodingdefault}{\sfdefault}{m}{sl}
\SetMathAlphabet{\mathsfit}{bold}{\encodingdefault}{\sfdefault}{bx}{n}
\DeclareMathOperator*{\argmin}{arg\,min}
\newcommand{\step}{\alpha_{k}}
\newcommand{\direction}{d}
\def\balign#1\ealign{\begin{align}#1\end{align}}
\def\baligns#1\ealigns{\begin{align*}#1\end{align*}}
\def\balignat#1\ealign{\begin{alignat}#1\end{alignat}}
\def\balignats#1\ealigns{\begin{alignat*}#1\end{alignat*}}
\newenvironment{talign*}
 {\csname align*\endcsname}
 {\endalign}
\newenvironment{talign}
 {\csname align\endcsname}
 {\endalign}
\def\balignst#1\ealignst{\begin{talign*}#1\end{talign*}}
\def\balignt#1\ealignt{\begin{talign}#1\end{talign}}
\newtheorem*{itheorem*}{Informal Theorem}
\newtheorem{proposition}{Proposition}
\newtheorem*{proposition*}{Proposition}
\newtheorem{lemma}{Lemma}
\newtheorem{definition}{Definition}
\newtheorem{assumption}{Assumption}
\newtheorem{example}{Example}
\DeclareMathSymbol{\shortminus}{\mathbin}{AMSa}{"39}
\definecolor{mygreen}{HTML}{238B45}
\definecolor{mygreen2}{HTML}{005824}
\newcommand{\rosentab}{
    \begin{tabular}{crcc}
        \toprule
        Method &  $\# f$ & \text{loss} & ET [s] \\
        \midrule
        \text{GD+BLS} & 4992& 7.30e-03 & 1.04e-1  \\
        \text{GD+ABLS} & 2754 & 7.21e-12 & {\bf \color{mygreen}9.98e-2}\\
        \midrule
        \text{AGD+BLS} & 42263& 9.25e-11 & 5.59e-1  \\
        \text{AGD+ABLS} & 2991 & 4.01e-13 & {\bf \color{mygreen} 6.40e-2}\\
        \bottomrule
    \end{tabular}
}
\newtheorem*{prop*}{Proposition}
\renewcommand{\eqref}[1]{(\ref{#1})}
\title{Adaptive Backtracking Line Search}
\author{Joao V. Cavalcanti\textsuperscript{1}, Laurent Lessard\textsuperscript{2} \& Ashia C. Wilson\textsuperscript{1} 
\\
\textsuperscript{1} MIT,
\textsuperscript{2} Northeastern University
\\
\texttt{\{caval,ashia\}@mit.edu, l.lessard@northeastern.edu}
}
\begin{document}

\maketitle

\begin{abstract}
Backtracking line search is foundational in numerical optimization. 
The basic idea is to adjust the step-size of an algorithm by a {\em constant} factor until some chosen criterion (e.g. Armijo, Descent Lemma) is satisfied. 
We propose a novel way to adjust step-sizes, replacing the constant factor used in regular backtracking with one that takes into account the degree to which the chosen criterion is violated, with no additional computational burden. This light-weight adjustment leads to significantly faster optimization, which we confirm by performing a variety of experiments on over fifteen real world datasets.
For convex problems, we prove adaptive backtracking requires no more adjustments to produce a feasible step-size than regular backtracking does.
For nonconvex smooth problems, we prove adaptive backtracking enjoys the same guarantees of regular backtracking. 
Furthermore, we prove adaptive backtracking preserves the convergence rates of gradient descent and its accelerated variant.
\end{abstract}

\section{Introduction}

We consider learning settings that can be posed as the unconstrained optimization problem 
\begin{align}
 \underset{x \in \mathbb{R}^d}{\argmin}\,\, F(x).
    \label{def:problem}
\end{align}
Typically, algorithms solve \eqref{def:problem} iteratively, refining the current iterate $x_{k}$ by taking a step \(\alpha_{k}d_{k}\):
\begin{align}
    x_{k} + \alpha_{k} \direction_{k}.
    \label{def:canonical-step}
\end{align}
Here, $\step$ is the size of the step taken in the direction $\direction_{k}$. 
Examples of iteration algorithms include gradient descent (GD), Newton's method, quasi-Newton methods \citep{More1982,Nocedal2006}, Nesterov's accelerated gradient method (AGD) \citep{Nesterov1983}, adaptive gradient methods \citep{Ruder2016} and their stochastic and coordinate-update variants \citep{Boyd2011}.
To find an appropriate step-size, iterative algorithms typically call a {\em line search} (LS) subroutine, which adopts some criterion and adjusts a tentative step-size until this criterion is satisfied.
For many popular criteria, if the direction \(d_{k}\) selected by the base algorithm is somewhat aligned with the gradient of \(F\), then a feasible step-size can be produced in a finite number of updates by successively reducing an initial tentative step-size until the criteria are satisfied.
The standard practice for this process, known as \emph{backtracking}, is to multiply the tentative step-size by a \emph{predefined constant factor} to update it. We propose a simple alternative to standard practice:

{\begin{center} \emph{to adjust the step-size by an {\bf \emph{online variable}} factor that depends on the line search criterion violation.} \end{center} }

While in principle this idea can be applied to many criteria, this paper focuses on illustrating it in the context of two line search criteria: the Armijo condition~\citep{Armijo1966}, arguably the most popular example of such criteria, and the descent lemma \cite[proposition A.24]{Bertsekas2016} in the context of composite objectives~\citep{Beck2009}. 
After motivating our choices of online adaptive factors, we show that they enjoy the best theoretical guarantees one can hope for.
Moreover, we prove that adaptive backtracking preserves the convergence rates of GD and AGD.
To conclude, we present numerical experiments on several real-world problems confirming that using online adaptive factors in line search subroutines can {\em  produce higher-quality step-sizes} and {\em significantly reduce} the total number of function evaluations standard backtracking subroutines require.


\paragraph{Contributions.} 
Our contributions can be summarized as follows.
\begin{itemize}[leftmargin=*]
    \item In \cref{sec:adaptive_strategy}, we propose a template for adaptive backtracking procedures with broad applicability.
    \item In \cref{subsec:armijo}, we apply the template to enforce the Armijo condition and, in \cref{sec:experiments}, present experiments on real-world problems showcasing that the adaptive subroutine outperforms regular backtracking and improves the performance of standard baseline optimization algorithms.
    \item In \cref{subsec:quadratic_upper_bound}, we apply the template on proximal-based algorithms to satisfy the descent lemma and present more real-world problems in \cref{sec:experiments} illustrating that the adaptive subroutine improves the performance of FISTA.
    \item For both subroutines, in \cref{sec:theory} we prove that for convex problems, adaptive backtracking takes no more function evaluations to terminate than regular backtracking in any single iteration.
    We also give global theoretical guarantees for adaptive backtracking in nonconvex smooth problems, which match those of regular backtracking.
    Furthermore, we show that adaptive backtracking preserves the convergence rates of gradient descent and its accelerated variant.
    In particular, the proof of accelerated convergence is based on a novel technical argument, to the best of our knowledge.
\end{itemize}

\section{Adaptive backtracking }
\label{sec:adaptive_strategy}

\subsection{Line search: criteria and search procedures}

Every line search subroutine can be decomposed into the \emph{criteria} that it enforces and the \emph{procedure} it uses to return a feasible step-size.
We now briefly discuss each component and provide examples.

\paragraph{Criteria.}
The most popular of line search criteria is the Armijo condition~\citep{Armijo1966}, which requires that the objective function sufficiently decrease along consecutive iterates.
Other popular sets of criteria are the weak and strong Wolfe conditions~\citep{Wolfe1969}, which comprise the Armijo condition and an additional curvature condition that prevents excessively small step-sizes and induces step-sizes for which the objective function decreases even more.
In contrast, nonmonotone criteria \citep{grippo1986nonmonotone, zhang2004nonmonotone} only require that some aggregate metric of the objective function values (e.g., an exponential moving average) decrease along consecutive iterates. 

\paragraph{Search procedures.}
The second component of a line search subroutine is the procedure that finds a step-size satisfying the target criteria. 
For example, Wolfe line search is often implemented by bracketing procedures based on polynomial interpolation \cite[pp.~60--61]{Nocedal2006}.
In contrast, several criteria consisting in a single condition such as Armijo and nonmonotone, are provably satisfied by sufficiently small step-sizes.
For them, the standard procedure to compute step-sizes fixes an initial tentative step-size and then consecutively multiplies it by a constant $\rho \in (0,1)$ until the criteria are satisfied.
This procedure is generally known as \emph{backtracking line search} (BLS).


\subsection{Adaptive backtracking}

BLS often enforces an inequality that is \emph{affine} in the step-size. 
%
In this case, BLS can be reformulated as
computing \(v(\alpha_{k})\), which is less than \(1\) when the line search criterion evaluated at the tentative step-size \(\alpha_{k}\) is violated, and then scaling \(\alpha_{k}\) by a factor until \(v(\alpha_{k})\) is greater than \(1\).
BLS (\cref{alg:BLS}) employs a fixed factor \(\rho\in \mathopen{(}0, 1\mathclose{)}\).
We propose a simple modification of this procedure:

{\begin{center} \emph{to replace $\rho$ with an adaptive factor $\hat\rho(v(\alpha_{k}))$ chosen as a nontrivial function of the violation $v(\alpha_{k})$. } \end{center} }

\noindent \begin{minipage}{0.45\textwidth}
\begin{algorithm}[H]
    \centering
    \caption{Backtracking Line Search}\label{alg:BLS}
    \begin{algorithmic}[1]
		\Require \hspace{0em}\hspace{-.3em}$\alpha_0\hspace{-.2em}>\hspace{-.2em}0, v\hspace{-.2em}:\mathbb{R}_+ \hspace{-.4em}\rightarrow \mathbb{R},\rho\in\mathopen{(}0,1\mathclose{)}$
		\Ensure $\step$
		\State $\step\gets \alpha_0$
		\While{$v({\alpha_{k}}) < 1$}
			\State $\step\gets \rho \cdot \step$
    		\EndWhile
    \end{algorithmic}
\end{algorithm}
\end{minipage}
\hfill
\begin{minipage}{0.5\textwidth}
\begin{algorithm}[H]
    \centering
    \caption{Adaptive Backtracking Line Search}\label{alg:ABLS}
    \begin{algorithmic}[1]
		\Require \hspace{-.4em}$\alpha_0\hspace{-.2em}>0, v\hspace{-.2em}:\mathbb{R}_+ \hspace{-.4em}\rightarrow \mathbb{R}, \hat{\rho}\hspace{-.2em}:\mathbb{R} \hspace{-.2em}\rightarrow (0,1)$
		\Ensure $\step$
		\State $\step\gets \alpha_0$
 		\While{$v(\alpha_{k}) < 1 $}
 			\State$\step\gets \hat\rho(v(\alpha_{k})) \cdot \alpha_{k} $
        	\EndWhile
    \end{algorithmic}
\end{algorithm}
\end{minipage}
\vspace{7pt}

\subsection{Related Work}
\label{subsec:related_work}

Backtracking is a simple and effective alternative to exact line search subroutines~\cite[Ch. 3]{Nocedal2006}, which helps to explain why it remains a popular procedure to enforce various line search criteria \citep{Beck2009,Nesterov2013,vaswani2019painless,galli2023dont,Aujol2024}.
Notwithstanding, there is no standard practice to select the adjustment factor \(\rho\), but rather some rough guidelines suggesting that the parameter \(\rho\) ``is often chosen to be between 0.1 (which corresponds to a very crude search) and 0.8 (which corresponds to a less crude search)'' \citep[p. 466]{Boyd2004} and that it ``is usually chosen from \(1/2\) to \(1/10\), depending on the confidence we have on the quality of the initial step-size'' \citep[p. 36]{Bertsekas2016}.
Indeed, we find that \(\rho\) varies somewhat arbitrarily depending on empirical performance and the conditions enforced by backtracking.
For example, \(\rho=0.8\) is used in \citep{Aujol2024} to enforce the descent lemma, while \(\rho=0.5\) is used in \citep{vaswani2019painless} to enforce the Armijo conditions.
With that in mind, in our experimental validation, we compare our proposed adaptive subroutine with regular backtracking for several values of \(\rho\).
Our goal is to exhibit compelling evidence for a simple alternative to a classic method that remains popular, rather than champion a particular line search subroutine.
Accordingly, we do not compare against subroutines that do not enforce similar line search criteria, such as \citep{FridovichKeil2020, Orseau2023}, nor subroutines that do not release code \citep{Oliveira2021}.
Likewise, we leave recent twists on backtracking, such as \citep{Truong2021,Calatroni2019,Rebegoldi2022}, for future work, since these methods could in principle also benefit from adaptive adjustments (see \cref{sec:future_work}.)

\subsection{Case study: Armijo condition}
\label{subsec:armijo}

The most popular criterion used in line search is the \textit{Armijo condition}~\citep{Armijo1966}, which is specified by a hyperparameter $c\in\mathopen{(}0,1\mathclose{)}$ and requires sufficient decrease in the objective function:
\begin{align}
	F(x_{k}+\step \direction_{k}) - F(x_{k})
	&\leq c\cdot \alpha_{k}\langle \nabla F(x_{k}),d_{k}\rangle.
	\label{ineq:Armijo}
\end{align}
For the Armijo condition, the direction \(d_{k}\) is usually assumed to be a descent direction:
\begin{assumption}[descent direction]
    The direction $\direction_{k}$ satisfies $\langle \nabla F(x_{k}), d_{k}\rangle <0$.
    \label{ass:descent-direction}
\end{assumption}

We define the \emph{violation} of \eqref{ineq:Armijo} as
\begin{subequations}\label{def:armijo-params}
\begin{align}
    v(\alpha_{k}) 
    \coloneqq  \frac{F(x_{k}+\step d_{k}) - F(x_{k})}{c \cdot \step \langle \nabla F(x_{k}),d_{k}\rangle}
    \label{def:violation_armijo}.
\end{align}
Under \cref{ass:descent-direction}, \eqref{ineq:Armijo} can be written as \(v(\alpha_{k}) \geq 1\). 
To account for the information conveyed by \eqref{ineq:Armijo} when violated, we choose the corresponding adaptive geometric factor \(\hat{\rho}(v(\alpha_{k}))\) as
\setlength{\fboxrule}{1pt}
\setlength{\fboxsep}{5pt}
\begin{align}
\fcolorbox{black}{gray!15}{ $\hat\rho(v(\alpha_{k})) \coloneqq   \max \left(\epsilon, \rho \frac{1 - c}{1 - c \cdot v(\step)} \right), $}
\label{def:adaptive_rho_armijo}
\end{align}
\end{subequations}
where \(\epsilon > 0\) is a small factor that prevents occasional numerical errors in \(v(\alpha_{k})\) from spreading to \(\hat{\rho}(v(\alpha_{k}))\).
Although \eqref{def:adaptive_rho_armijo} is parameterized by \(\epsilon\) and \(\rho\), for each method we fix their values on all experiments, effectively making \cref{alg:ABLS} parameter free. 
We use our adaptive BLS procedure to find suitable step-sizes for three standard base methods: gradient descent (GD), Nesterov's accelerated gradient descent (AGD) \citep{Nesterov1983} and Adagrad \citep{Duchi2011}.
The standard implementations that we use for these algorithms are given in \cref{subsec:app-log-reg-methods}.
Incorporating line search into GD and Adagrad is straightforward, but the case of AGD merits further comment.

\paragraph{Backtracking and AGD.}
Unlike GD, AGD is not necessarily a monotone method in the sense that \(F(x_{k} + \alpha_k d_k) \leq F(x_{k})\) need not hold. 
But AGD is a \emph{multistep} method, one being a GD step, for which line search can help to compute a step-size or, equivalently, to estimate the Lipschitz constant \(L\). 
If the estimate of \(L\) satisfies \eqref{ineq:Armijo} with \(c=1/2\) and is increasingly multiplied by a lower bounded positive geometric factor, then AGD with line search enjoys essentially the same theoretical guarantees of AGD tuned with constant parameters.
We also consider AGD with memoryless line search with fixed predetermined initial step-sizes.
Then, unless some variant such as \cite{Scheinberg2014} is used, the theoretical guarantees are not necessarily preserved when AGD is combined with memoryless line search.
For some values of \(\rho\), however, we find empirically that not only does the resulting method converge, but it does so much faster than the monotone line search variant, which in turn typically converges faster than AGD tuned with a pre-computed estimate \(\bar{L}\) (see~\cref{sec:app-logreg-monotone}.)

\subsection{Case study: descent lemma}
\label{subsec:quadratic_upper_bound}

A standard assumption in the analysis and design of several optimization algorithms is that gradients are Lipschitz-smooth, which implies~\cite[Thm. 2.1.5.]{Nesterov2018} that there is some \(L>0\) such that
\begin{talign}
    f(y) \leq f(x) + \langle \nabla f(x), y-x \rangle + \frac{L}{2}\Vert y-x \Vert^{2},
    && \forall x,y.
    \label{ineq:descent_lemma}
\end{talign}
Inequality \eqref{ineq:descent_lemma} is commonly known as the \emph{descent lemma} \citep{Bertsekas2016}.
In particular, it is commonly assumed \eqref{ineq:descent_lemma} holds for algorithms that solve problems with composite objective functions \(F\coloneqq  f + \psi\), where \(f\) is Lipschitz-smooth convex and \(\psi\) is continuous, possibly nonsmooth, convex.
A prototypical example of such an algorithm is FISTA \citep{Beck2009}, which is an extension of Nesterov's AGD to composite problems (for details see \cref{subsec:app-log-reg-methods}.)
FISTA assumes that it produces points \(y_{k}\) and \(p_{\step}(y_{k})\) satisfying \eqref{ineq:descent_lemma} applied to \(F\) with \(x=y_{k}\) and \(y=p_{\alpha_k}(y_{k})\), where \(p_{\alpha}\) denotes the proximal operator \citep{Parikh2014} parameterized by \(\alpha>0\) and defined by
\begin{align}
    p_{\alpha}(y)
    \coloneqq  \argmin_{x}\bigg\{ \psi(x) + \frac{1}{2\alpha} \left\| x - \bigl( y-\alpha\nabla f(y) \bigr) \right\|^{2} \bigg\}.
    \label{def:prox_op_FISTA}
\end{align}

In practice, \(\alpha=1/L\) is seldom known for a given \(f\), and FISTA estimates it with some \(\step\) by checking
\begin{align}
    F(p_{\alpha_k}(y_{k})) 
    \leq f(y_{k}) + \langle \nabla f(y_{k}), p_{\alpha_k}(y_{k})-y_{k} \rangle + \tfrac{1}{2\step}\Vert p_{\alpha_k}(y_{k})-y_{k} \Vert^{2} +\psi(p_{\alpha_k}(y_{k})).
    \label{ineq:lipschitz_FISTA}
\end{align}
Since \eqref{ineq:lipschitz_FISTA} holds for any \(\step \leq 1/L\), an estimate \(\step\) can be precomputed from analytical upper bounds on \(L\) for particular cases of \(f\), but these bounds tend to be overly conservative and can lead to poor performance.
A better alternative, adopted by FISTA and many methods \citep{Nesterov2013,Scheinberg2014}, is to backtrack: reduce \(\step\) by some constant factor \(\rho<1\) until \eqref{ineq:lipschitz_FISTA} holds.

We define the \emph{violation} of \cref{ineq:lipschitz_FISTA} as
\begin{subequations}
\begin{talign}
    v(\alpha_{k}) 
    \coloneqq  \frac{1}{2\step} \Vert p_{\alpha_k}(y_{k}) - y_{k} \Vert^{2}
    \Big/ \Big( f(p_{\alpha_k}(y_{k})) - f(y_{k}) - \langle \nabla f(y_{k}), p_{\alpha_k}(y_{k}) - y_{k} \rangle \Big),
    \label{def:violation_lipschitz}
\end{talign}
and the corresponding adaptive factor as:
\setlength{\fboxrule}{1pt} 
\setlength{\fboxsep}{5pt}
\begin{align}
    \fcolorbox{black}{gray!15}{
    $\hat{\rho}(v(\alpha_{k})) 
    \coloneqq
    \rho v(\alpha_{k}).$
    }
    \label{def:adaptive_rho_FISTA}
\end{align}
\end{subequations}
In experiments below, we use \eqref{def:adaptive_rho_FISTA} to find suitable step-sizes for FISTA, with a fixed \(\rho < 1\) value. 

\section{Empirical performance}
\label{sec:experiments}

We present four experiments illustrating different ways and scenarios in which our adaptive backtracking line search (ABLS) subroutine (\cref{alg:ABLS}) can outperform regular backtracking (\cref{alg:BLS}).

\looseness=-1

\subsection{Convex objective: logistic regression + Armijo}
\label{subsec:logreg}

First, we consider the logistic regression objective with \(L_{2}\) regularization, defined by 
    \begin{align} 
        F(x) = -\dfrac{1}{n}\sum_{i=1}^n \left(y_i \log (\sigma(a_i^\top x)) + (1 - y_i) \log (1 - \sigma(a_i^\top x))\right) + \dfrac{\gamma}{2}\Vert x \Vert^{2},
            \label{prob:logreg}
    \end{align}
where \(\sigma(z)=1/(1+\exp(-z))\) is the sigmoid function, \(\gamma >0\) and \((A_{i},b_{i})\in\mathbb{R}^{d}\times\left\{ 0,1 \right\}\) are \(n\) observations from a given dataset. 
For each dataset, \(\bar{L}=\lambda_{\max}(A^{\top}A)/(4n)\) provides an upper bound on the true Lipschitz parameter of the first term in \eqref{prob:logreg}, with which we fix \(\gamma = \bar{L} / (10n)\) and the step-size of gradient descent to \(1/(\bar{L}+\gamma)\).
In all experiments, the initial point \(x_{0}\) is the origin as is standard.\\

\noindent{\bf Result Summary.} 
A succinct summary of our results contained in~\cref{tab:logreg} and~\cref{sec:app-log-reg} is that
{\begin{center} 
{\em 
across datasets and step-size initializations considered,
adaptive backtracking is more robust than regular backtracking and often leads to significant improvements with respect to base methods.
}
\end{center} }

\begin{table}[!ht]
    \centering
    \resizebox{\textwidth}{!}
    {
        \begin{tabular}{cl|rrr|rrr|rrr|r}
        \toprule
        &  &  \multicolumn{6}{c|}{Backtracking Line Search (BLS)} & \multicolumn{3}{c|}{Adaptive BLS (ABLS)} & \\
        &  &  \multicolumn{6}{c|}{} & \multicolumn{3}{c|}{}  & \\
        &  &    \multicolumn{3}{c|}{$\rho=0.2$}  &  \multicolumn{3}{c|}{$\rho=0.3$}    & \multicolumn{3}{c|}{$\rho=0.3$} &  \\
        Method &Dataset& $\# f$ & $\# \nabla f $ & ET [s] & $\# f$ & $\# \nabla f $ & ET [s] &  $\# f$ & $\# \nabla f $ & ET [s] & \multicolumn{1}{l}{\em \bf gain} \\
        \cmidrule(lr){1-2} \cmidrule(lr){3-5}\cmidrule(lr){6-8}\cmidrule(lr){9-12}
        GD & \textsc{adult} & 148597.2 & 32582.0 & 1319.0 & 74258.5 & 18749.0 & {\bf 694.7} &  37296.0 & 13583.8 & {\bf \color{mygreen} 370.0} & {\bf \color{mygreen2} 46.7\% } \\
        ~ & \textsc{g\_scale} & 58488.5 & 18252.2 & {\bf 3050.4 } & 65429.2 & 17111.5 & 3059.7 & 25917.2 & 11700.5 & {\bf \color{mygreen} 1607.0} & {\bf \color{mygreen2} 47.3\% } \\
        ~ & \textsc{mnist} & 148469.8 & 41726.2 & {\bf 10986.4} & 207786.8 & 46475.5 & 13474.8 &  52616.5 & 22385.8 & {\bf \color{mygreen} 4841.9} & {\bf \color{mygreen2} 55.9\% } \\
        ~ & \textsc{mushrooms} & 14170.5 & 6507.2 & {\bf 31.7} & 14800.8 & 6628.5 & 33.9 & 7611.0 & 3693.8 & {\bf \color{mygreen} 17.3} & {\bf \color{mygreen2} 45.5\% } \\
        ~ & \textsc{phishing} & 33388.2 & 8059.8 & 63.0 & 34193.5 & 7938.5 & {\bf 62.4} &16543.2 & 6434.5 & {\bf \color{mygreen} 26.4} & {\bf \color{mygreen2} 57.6\% } \\
        ~ & \textsc{protein} & 28011.0 & 13260.5 & {\bf 4481.6} & 35733.2 & 14868.8 & 5282.0 &  27656.0 & 13207.2 & {\bf \color{mygreen} 4137.4} & {\bf \color{mygreen2} 7.7\% } \\
        ~ & \textsc{web-1} & 6721.5 & 3139.0 & 9.0 & 6156.8 & 2972.8 & {\bf 8.4} &  6192.2 & 3024.8 & {\bf \color{mygreen} 7.9} & {\bf \color{orange} (5.4\%)* } \\
        \cmidrule(lr){1-12}
        
              	         &  &  \multicolumn{6}{c|}{} & \multicolumn{3}{c|}{}  & \\
        &  &    \multicolumn{3}{c|}{$\rho=0.5$}  &  \multicolumn{3}{c|}{$\rho=0.6$}   & \multicolumn{3}{c|}{$\rho=0.9$} &\\
        Method &Dataset& $\# f$ & $\# \nabla f $ & ET [s] & $\# f$ & $\# \nabla f $ & ET [s] &  $\# f$ & $\# \nabla f $ & ET [s] & \multicolumn{1}{l}{\em \bf gain} \\
        \cmidrule(lr){1-2} \cmidrule(lr){3-5}\cmidrule(lr){6-8}\cmidrule(lr){9-12}
        AGD & \textsc{adult} & 17288.8 & 2014.2 & 116.6 & 49331.2 & 3806.2 & 247.0 & 7999.0 & 2275.0 & {\bf \color{mygreen} 49.2} & {\bf \color{mygreen2} 52.5\% } \\
        ~ & \textsc{g\_scale} & 3580.2 & 592.5 & {\bf 114.9} & 18530.8 & 1964.0 & 512.9 & 2204.5 & 728.5 & {\bf \color{mygreen} 84.1} & {\bf \color{mygreen2} 26.8\% } \\
        ~ & \textsc{mnist}  & 8934.2 & 1524.5 & {\bf 452.3} & 14365.8 & 1846.5 & 643.7 & 4943.2 & 1666.2 & {\bf \color{mygreen} 283.0} & {\bf \color{mygreen2} 37.4\% } \\
        ~ & \textsc{mushrooms}& 1100.5 & 372.2 & 1.7 & 1146.8 & 367.8 & {\bf 1.6} & 850.0 & 376.2 & {\bf \color{mygreen} 1.4} & {\bf \color{mygreen2} 15.5\% } \\
        ~ & \textsc{phishing}  & 6763.2 & 944.2 & 9.1 & 8344.2 & 944.8 & {\bf 8.6} & 3699.0 & 1058.0 & {\bf \color{mygreen} 4.0} & {\bf \color{mygreen2} 53.6\% } \\
        ~ & \textsc{protein} & 2865.0 & 1232.2 & 396.3 & 3397.5 & 1272.5 & {\bf 346.5} & 2743.2 & 1257.0 & {\bf \color{mygreen} 291.6} & {\bf \color{mygreen2} 15.8\% } \\
        ~ & \textsc{web-1} & 651.8 & 208.5 & 0.6 & 699.8 & 202.2 & {\bf 0.5} & 519.2 & 217.8 & {\bf \color{mygreen} 0.4} & {\bf \color{mygreen2} 3.3\% } \\
        \cmidrule(lr){1-12}
        
        &  &  \multicolumn{6}{r|}{} & \multicolumn{3}{r|}{}  & \\
        &  &    \multicolumn{3}{c|}{$\rho=0.2$}  &  \multicolumn{3}{c|}{$\rho=0.3$}   & \multicolumn{3}{c|}{$\rho=0.3$} &\\
        Method &Dataset& $\# f$ & $\# \nabla f $ & ET [s] & $\# f$ & $\# \nabla f $ & ET [s] &  $\# f$ & $\# \nabla f $ & ET [s] & \multicolumn{1}{l}{\em \bf gain} \\
        \cmidrule(lr){1-2} \cmidrule(lr){3-5}\cmidrule(lr){6-8}\cmidrule(lr){9-12}
        Adagrad & \textsc{adult} & 124102.0 & 20001.0 & {\bf 699.5} & 145159.8 & 19789.8 & 746.5 & 27179.0 & 8000.5 & {\bf \color{mygreen} 178.8} & {\bf \color{mygreen2} 74.4\% } \\
        ~ & \textsc{g\_scale} & 274023.2 & 33071.5 & {\bf 7396.5} & 361852.0 & 34933.8 & 9740.9  & 84201.0 & 17176.2 & {\bf \color{mygreen} 2425.4} & {\bf \color{mygreen2} 67.2\% } \\
        ~ & \textsc{mnist} & 86240.5 & 13843.8 & {\bf 3921.2} & 99021.8 & 14760.8 & 4377.2 &  12366.2 & 3521.8 & {\bf \color{mygreen} 679.5} & {\bf \color{mygreen2} 82.7\% } \\
        ~ & \textsc{mushrooms} & 7794.8 & 1967.8 & {\bf 9.0} & 7239.5 & 1693.0 & 9.3 & 3751.5 & 1446.0 & {\bf \color{mygreen} 6.0} & {\bf \color{mygreen2} 33.2\% } \\
        ~ & \textsc{phishing} & 74737.5 & 15833.5 & {\bf 68.9} & 117564.0 & 20001.0 & 96.4 & 18053.0 & 6375.0 & {\bf \color{mygreen} 19.4} & {\bf \color{mygreen2} 71.8\% } \\
        ~ & \textsc{protein} & 6103.0 & 809.2 & 420.6 & 4040.2 & 429.0 & {\bf 257.9} &  1845.8 & 446.5 & {\bf \color{mygreen} 164.4} & {\bf \color{mygreen2} 36.2\% } \\
        ~ & \textsc{web-1} & 4384.2 & 1027.0 & 2.9 & 3857.8 & 745.2 & {\bf 2.5} &  1726.5 & 568.8 & {\bf \color{mygreen} 1.3} & {\bf \color{mygreen2} 50.1\% } \\
        \bottomrule
    \end{tabular}
    }
    \vspace{.3cm}
    \caption{Logistic regression.
    $\# f$ and $\# \nabla f$ denote the number of function and gradient evaluations and ET refers to elapsed time in seconds. The {\bf gain} is given by $1-(\text{ET of ABLS})/(\text{ET of BLS})$ with the best ET for BLS across \(\rho\) in each experiment, which is bolded. We ran each BLS experiment with a grid of four $\rho$'s and present the best two in the table.
    The gain for GD on \textsc{web-1} is colored orange because although ABLS terminated before the best performing BLS variant, it required more function and gradient evaluations.
    This anomaly can be attributed to the relatively small ET for this problem.}
    \label{tab:logreg}
\end{table}

\noindent {\bf Datasets and methods.} 
We take observations from seven datasets, whose details can be found in \cref{sec:app-log-reg}.
We consider GD, AGD, and Adagrad, described in in \cref{subsec:app-log-reg-methods}, with BLS for \(\rho \in \{0.2, 0.3, 0.5, 0.6\}\) and our ABLS with a pre-set $\rho$.

\noindent {\bf Initialization.}
We set the starting point \(x_{0}\) as the origin and fix $\epsilon = 0.01$ in \eqref{def:adaptive_rho_armijo} on all experiments. 
We also fix \(\rho\), but change it according to the base method.
For more details, see \cref{sec:app-log-reg}.

\noindent {\bf Evaluation.} 
We run all methods for long enough to produce solutions with designated precision, then average various metrics over different initial step-sizes.
For more details, see \cref{sec:app-log-reg}.
All experiments were run on a supercomputer cluster containing Intel Xeon Platinum 8260 and Intel Xeon Gold 6248 CPUs \cite{Reuther2018}.

\noindent {\bf Remarks.}
\cref{tab:logreg} shows that ABLS significantly outperform BLS.
For GD and Adagrad, ABLS variants outperforms BLS for almost every combination of \(\rho\) and \(\alpha_{0}\) by saving function evaluations and returning better step-sizes, which speed up convergence.
\cref{fig:logreg_time_comparison_gd} illustrates this point by showing how the suboptimality gap evolves with time for the baseline GD, its ABLS variant and BLS variants for two choices of initial step-size.
In particular, increasing the initial step-size helps BLS in some datasets but not in others.
\cref{fig:logreg_time_comparison_agd} shows a similar trend for the case of AGD.
In general, ABLS is more robust to the choice of initial step-size and that is the main reason why the ABLS variant of AGD outperforms its BLS counterparts.
In \cref{sec:app-logreg-memoryless}, \cref{fig:logreg_gd_sample_step_sizes,fig:logreg_agd_sample_step_sizes} show the corresponding step-sizes.
Initially, ABLS returns smaller GD step-sizes than BLS, but this trend quickly reverses.
A plausible explanation is that BLS returns the largest step-sizes that satisfy \eqref{ineq:Armijo}, within a factor of \(\rho\).
If the step-sizes are excessively large initially, they can lead to worse optimization paths (e.g., more zig-zagging).
For AGD, the step-sizes follow a similar trend initially, but then the adaptive step-size seems to converge while the regular step-sizes not always do.
This can be indicative of another shortcoming of regular backtracking, namely that it can only return step-sizes that are powers of \(\rho\) times the initial step-size, in contrast with adaptive backtracking.


\begin{figure}[!ht]
    \centering
    \includegraphics[width=0.9\linewidth]{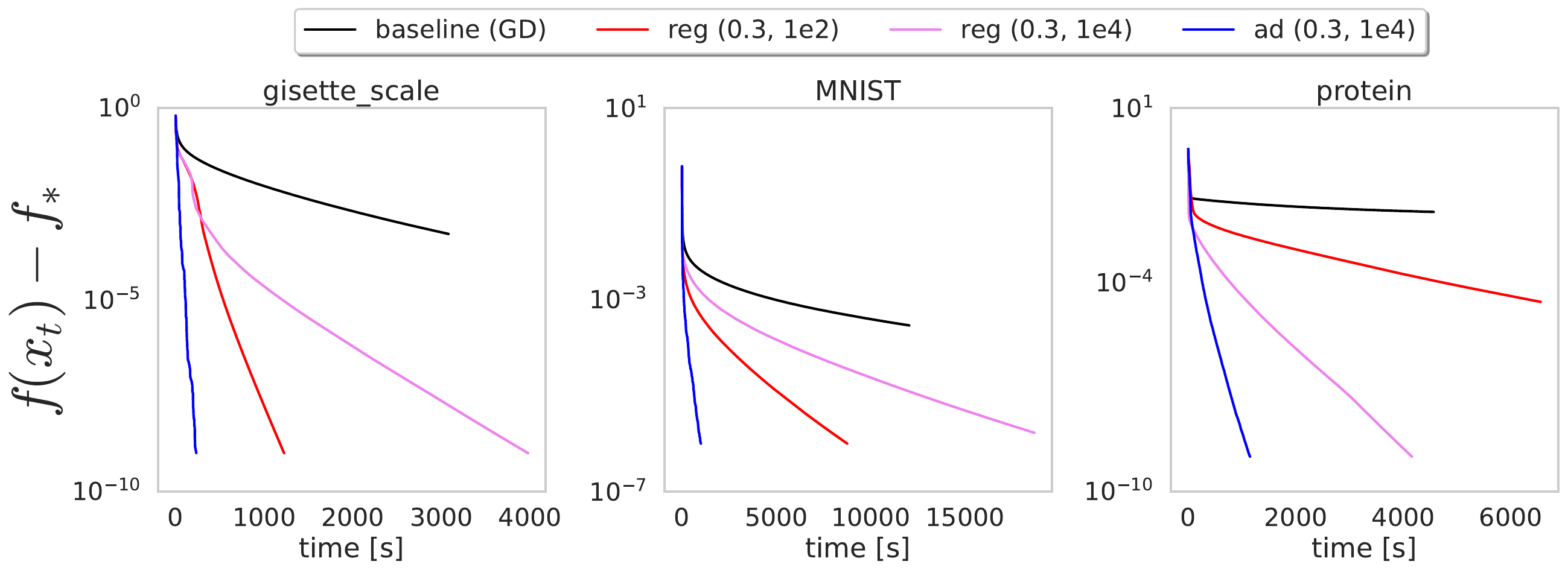}
    \caption{Baseline: GD with constant \(\alpha_{k}=1/\bar{L}\); reg (\(\rho, \beta\)) and ad (\(\rho, \beta\)): GD with, respectively, regular and adaptive memoryless BLS parameterized by \(\rho\) and \(\alpha_{0}=\beta/\bar{L}\).}
    \label{fig:logreg_time_comparison_gd}
\end{figure}

\begin{figure}[!ht]
    \centering
    \includegraphics[width=0.9\linewidth]{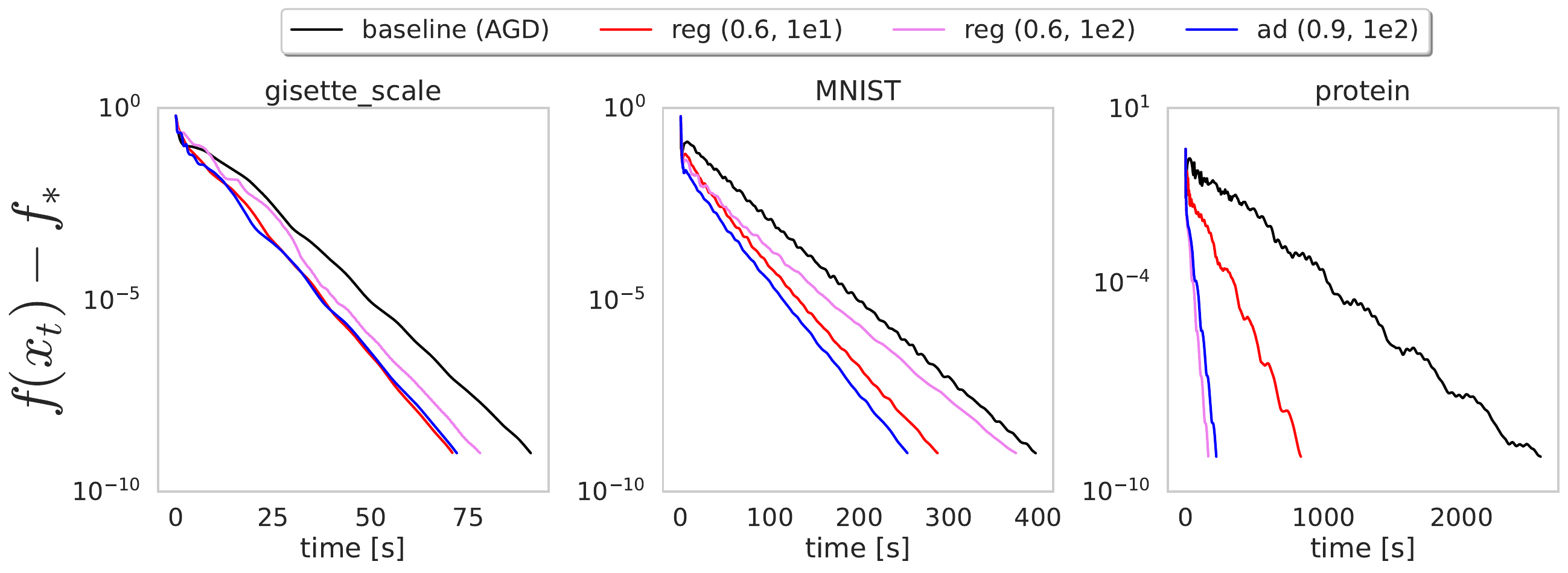}
    \caption{Baseline: AGD with constant \(\alpha_{k}=1/\bar{L}\); reg (\(\rho, \beta\)) and ad (\(\rho, \beta\)): AGD with, respectively, regular and adaptive memoryless BLS parameterized by \(\rho\) and \(\alpha_{0}=\beta/\bar{L}\).}
    \label{fig:logreg_time_comparison_agd}
\end{figure}

\subsection{Convex objective: linear inverse problems + descent Lemma}

The goal of a linear inverse problem is to recover the sparse signal $x$ from a noisy measurement model \(y = Ax + \epsilon\), where \(A \in \mathbb{R}^{n\times d}\) and \(y \in \mathbb{R}^{n}\) are known, and \(\epsilon\) is unknown noise. The problem of estimating \(x\) is typically posed as a Lasso objective \citep{Santosa1986, Tibshirani1996}
\begin{align*}
F(x) = \frac{1}{2}\|Ax - y\|^2 + \lambda \|x\|_{1}.
\end{align*}

\noindent {\bf Datasets.} 
We take observations \(A\) from eight datasets, see \cref{sec:app-lin-inv} for details.

\noindent{\bf Methods.}
We consider FISTA \citep{Beck2009} (\cref{alg:FISTA}) and BLS variants.
For BLS, \(\rho=\{1/2,1/3,1/5\}\) and \(\rho=1/1.1 \approx .9\) for ABLS, mirroring the choice for AGD above.
All BLS methods start with the same initial Lipschitz constant estimate increase it accordingly.

\noindent {\bf Initialization.} 
For each dataset, we empirically find values of \(\alpha_{0}=1/L_0\) around which backtracking becomes active, and then increase them successively (values reported on \cref{sec:app-lin-inv}.)

\begin{table}[!ht]
    \centering
    \resizebox{.8\textwidth}{!}
    {
        \begin{tabular}{cl|rr|rr|rr|r}
        \toprule
        &  &  \multicolumn{4}{c|}{Backtracking Line Search (BLS)} & \multicolumn{2}{c|}{Adaptive BLS (ABLS)} & \\
        &  &  \multicolumn{4}{c|}{} & \multicolumn{2}{c|}{}  & \\
        &  &    \multicolumn{2}{c|}{$\rho=0.5$}  &  \multicolumn{2}{c|}{$\rho=0.3$}    & \multicolumn{2}{c|}{$\rho=0.9$} &  \\
        Method &Dataset& $\Delta \# f$ & $\# \nabla f $  & $\Delta \# f$ & $\# \nabla f $  &  $\Delta \# f$ & $\# \nabla f $ & \multicolumn{1}{l}{\em \bf gain} \\
        \cmidrule(lr){1-2} \cmidrule(lr){3-4}\cmidrule(lr){5-6}\cmidrule(lr){7-9}
        FISTA 
        & \textsc{digits} & 15.5 & {\bf 28282.25} & 10 & 34730.75 & 4.75 &  {\bf \color{mygreen} 16756} & {\bf \color{mygreen2} 40.8\% } \\
        ~ & \textsc{iris} & 10.75 & {\bf 726.25} & 7 & 816.5 & 4 &  {\bf \color{mygreen} 710} & {\bf \color{mygreen2} 2.2\% } \\
        ~ & \textsc{olivetti} & 10 & {\bf 242709.5} & 6.25 & 246827.75 &  2 &  {\bf \color{mygreen} 212930.75} & {\bf \color{mygreen2} 12.3\% } \\
        ~ & \textsc{lfw}\(^{\ast}\) & 26.25 & 49093.75 & 17 & {\bf 49014.5} &  2 &  {\bf \color{mygreen} 45070.75} & {\bf \color{mygreen2} 8.0\% } \\
        ~ & \textsc{spear3} & 13.25 & {\bf 328328.75} & 9 & 506308.5 & 2 &  {\bf \color{mygreen} 255417.5} & {\bf \color{mygreen2} 22.2\% } \\
        ~ & \textsc{spear10} & 44.75 & {\bf 18691} & 29.75 & 19992.75 &  8 &  {\bf \color{mygreen} 15128} & {\bf \color{mygreen2} 19.1\% } \\
        ~ & \textsc{sparco3} & 27.75 & {\bf 266.25} & 18.5 & 276.75 & 3.25 & {\bf \color{mygreen} 251} & {\bf \color{mygreen2} 5.7\% } \\
        ~ & \textsc{wine}& 48 & {\bf 529333.25} & 27.75 & 564293.75 &  8.5 &  {\bf \color{mygreen} 472527} & {\bf \color{mygreen2} 10.7\% } \\
        \bottomrule
    \end{tabular}
    }
    \vspace{.3cm}
    \caption{Linear inverse problem.
    \(\# \nabla f\) and \(\Delta \# f\) denote the number of gradient and excess function evaluations (total function evaluations minus two times total iterations). 
    The {\bf gain} is given by
    $1-(\# \nabla f\text{ of ABLS})/(\# \nabla f \text{ of BLS})$
    with the best ET for BLS across \(\rho\) in each dataset (bolded.)
    We run each BLS experiment with three \(\rho\)'s and present the best two in the table.
    ABLS reached the desired precision in all testpoints while the asterisk on \textsc{LFW}\(^{\ast}\) indicates BLS did not in at least one testpoint.
    }
    \label{tab:fista}
\end{table}

\noindent {\bf Results Summary.}
The ABLS variant of FISTA outperforms its BLS counterparts across all datasets tested.
Moreover, the best value of \(\rho\) for BLS changes from one dataset to the other.
Since the Lipschitz constant estimate is monotone, function evaluations vary little across methods and have small impact on performance.
Nevertheless, ABLS requires fewer function evaluations for all datasets.

\subsection{Nonconvex objective: Rosenbrock + Armijo}
\label{subsec:experiments_rosenbrock}

We consider the classical nonconvex problem given by the Rosenbrock objective function \(F(u,v) = 100(u-v^{2})^{2}+(1-v)^{2}\).
We use the origin as the initial point and \(0.1\) as the initial step-size.
\cref{fig:rosenbrock} shows the optimization paths for BLS and ABLS memoryless variants of GD and AGD after 1000 iterations, using \(\rho=0.3\) and \(\rho=0.9\), respectively.
We see that the ABLS variants achieve better losses, requiring far fewer function evaluations and less time to do so.

\begin{figure}[!ht]
\centering
    \begin{subfigure}[t]{0.27\textwidth}
        \centering
        \includegraphics[width=1\textwidth]{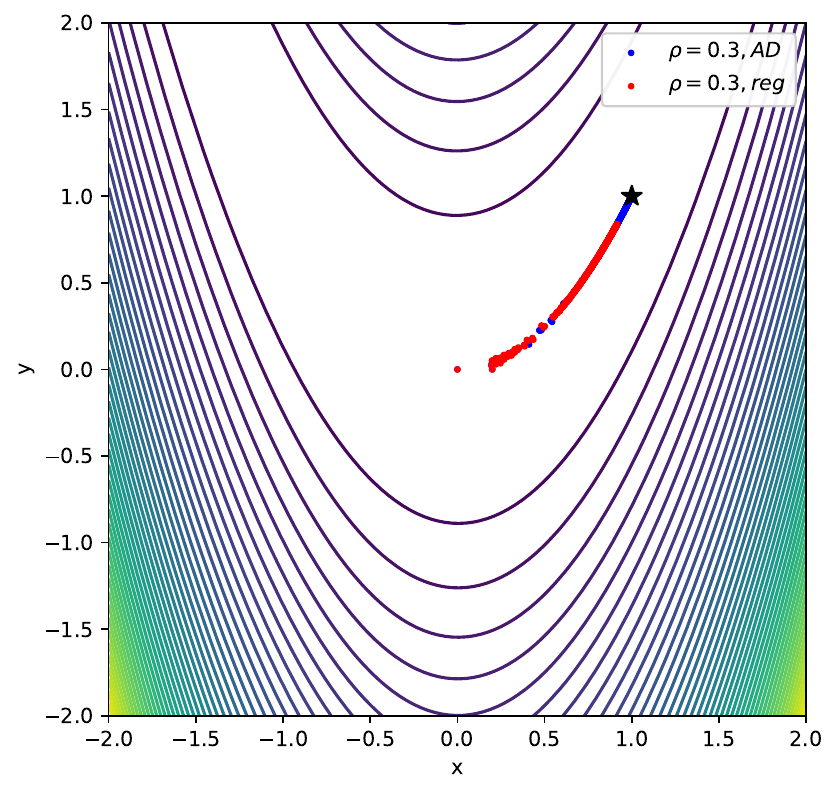}
        \caption{GD}
        \label{fig:rosenbrock_gd}
    \end{subfigure}
    ~\hspace{10pt}
    \begin{subfigure}[t]{0.27\textwidth}
        \centering
        \includegraphics[width=1\textwidth]{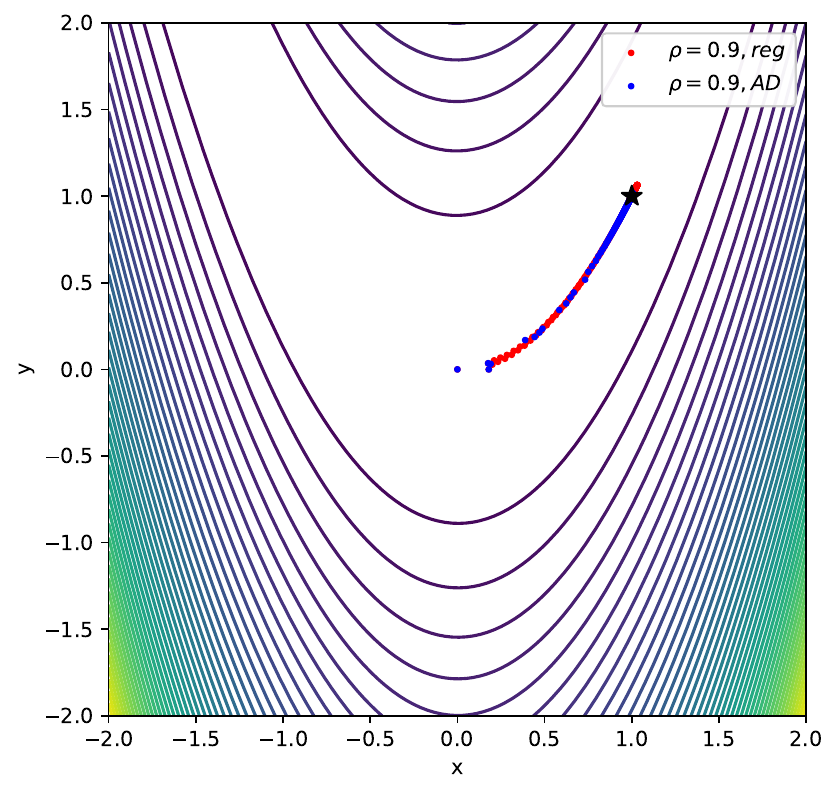}
        \caption{AGD}
        \label{fig:rosenbrock_agd}
    \end{subfigure}
\quad
 \subfloat[][]{\scalebox{0.8}{\rosentab}\vspace{1cm}}%
    \caption{Performance of GD and AGD regular (red) and adaptive (blue) BLS variants on Rosenbrock. ``loss'' refers to the final loss after 1000 iterations.}
    \label{fig:rosenbrock}
\end{figure}

\subsection{Nonconvex objective: matrix factorization + Armijo}
\label{subsec:experximents_matrix_factorization}

Lastly, we consider the nonconvex problem of matrix factorization, defined by the objective
\(F(U,V) = \frac{1}{2} \Vert UV^{\top} -A\Vert_{F}^{2}\),
where \(A\in \mathbb{R}^{m \times n}\), \(U\in \mathbb{R}^{m \times r}\), \(V\in \mathbb{R}^{n \times r}\) and \(r<\min\{m,n\}\).
We take \(A\) from the MovieLens 100K dataset \citep{Harper2015} and consider three rank values \(r\in \{ 10, 20, 30 \}\) (see \cref{sec:app-matrix-factorization} for further details and full plots.)

For this experiment we replicate the initialization and evaluation methodologies of \cref{subsec:logreg}, except we disconsider Adagrad, and pick different values for initial step-sizes, \(\{0.05, 0.5, 5, 50 \}\), and \(\rho\).
Namely, we let \(\rho\in \{ 0.2, 0.3, 0.5, 0.6 \}\) for the BLS variants, but fix \(\rho=0.3\) and \(\rho=0.9\) for the ABLS GD and AGD variants, respectively.
\cref{tab:matrix_factorization} summarizes the results for ABLS and the top two BLS variants.
Once again, the best value of \(\rho\) for BLS is inconsistent: \(\rho=0.3\) for ranks 10 and 20 but \(\rho=0.2\) for rank 30.
ABLS requires significantly fewer gradient and function evaluations than the top BLS variant does, which leads to considerable gains in time to achieve the desired precision.
\noindent \begin{table}[!ht]
    \centering
    \resizebox{\textwidth}{!}
    {
        \begin{tabular}{cl|rrr|rrr|rrr|r}
        \toprule
        &  &  \multicolumn{6}{c|}{Backtracking Line Search} & \multicolumn{3}{c|}{Adaptive BLS (ABLS)} & \\
        &  &  \multicolumn{6}{c|}{} & \multicolumn{3}{c|}{}  & \\
        &  &    \multicolumn{3}{c|}{$\rho=0.2$}  &  \multicolumn{3}{c|}{$\rho=0.3$}   & \multicolumn{3}{c|}{$\rho=0.3$} &  \\
        Method & Rank & $\# f$ & $\# \nabla f $ & ET [s] & $\# f$ & $\# \nabla f $ & ET [s] & $\# f$ & $\# \nabla f $ & ET [s] & \multicolumn{1}{l}{\em \bf gain} \\
        \cmidrule(lr){1-2} \cmidrule(lr){3-5}\cmidrule(lr){6-8}\cmidrule(lr){9-12}
        GD & \textsc{10} & 39960.0 & 5683.5 & 1034.5 & 33531.8 & 4897.8 & {\bf 999.5} & 52075.8  & 1631.8 & {\bf \color{mygreen} 64.8} & {\bf \color{mygreen2} 93.5\% } \\
        ~ & \textsc{20} & 127480.2 & 18734.5 & 1153.2 & 111322.0 & 16109.2 & {\bf 1010.9} & 377111.8 & 5133.0 & {\bf \color{mygreen} 170.9} & {\bf \color{mygreen2} 83.1\% } \\
        ~ & \textsc{30} & 231170.5 & 35639.8 & {\bf 2035.5} & 394472.2 & 50001.0 & 4195.1 & 681952.0  & 14802.8 & {\bf \color{mygreen} 669.2} & {\bf \color{mygreen2} 67.1\% } \\
        \cmidrule(lr){1-2} \cmidrule(lr){3-5}\cmidrule(lr){6-8}\cmidrule(lr){9-12}
          &  &  \multicolumn{6}{c|}{} & \multicolumn{3}{c|}{}  & \\
        &  &    \multicolumn{3}{c|}{$\rho=0.3$}  &  \multicolumn{3}{c|}{$\rho=0.5$}   & \multicolumn{3}{c|}{$\rho=0.9$} &\\
        AGD & \textsc{10} &  362029.7 & 21909.3 & 2377.0 & 44873.0 & 6198.3 & {\bf 379.4} & 22707.7 & 6663.7 & {\bf \color{mygreen} 172.0} & {\bf \color{mygreen2} 54.7\% } \\
        ~ & \textsc{20} &  479432.0 & 35672.0 & {\bf 2987.9} & 491753.3 & 33523.3 & 3588.6 & 80720.3 & 24588.7 & {\bf \color{mygreen} 738.0} & {\bf \color{mygreen2} 75.3\% } \\
        ~ & \textsc{30} &  357885.7 & 39234.3 & {\bf 2187.8} & 478084.3 & 42985.3 & 5157.0 & 95040.0 & 27454.0 & {\bf \color{mygreen} 855.1} & {\bf \color{mygreen2} 60.9\% } \\
        \bottomrule
    \end{tabular}
    }
    \vspace{.3cm}
    \caption{Backtracking for matrix factorization.
    $\# f$ and $\# \nabla f$ denote the number of function and gradient evaluations and ET refers to elapsed time in seconds. The {\bf gain} is given by $1-(\text{ET of ABLS})/(\text{ET of BLS})$ with the best ET for BLS across \(\rho\) in each experiment, which is bolded.}
    \label{tab:matrix_factorization}
\end{table}

\section{Motivation and theoretical results}
\label{sec:theory}


In this section, we motivate our choices of adaptive factors and characterize them theoretically.


The particular choices of adaptive factors were made with two goals in mind: \emph{generate more aggressive backtracking factors to save function evaluations} and \emph{guarantee reasonably large step-sizes to achieve fast convergence}.
To meet our first goal, \({\hat{\rho}(v(\alpha_{k})) \in (0, \rho)}\) must hold.
Indeed, if \eqref{ineq:Armijo} is violated, then \(v(\alpha_{k}) < 1\) because \(d_{k}\) is assumed a descent direction, where \(v\) is defined by \eqref{def:violation_armijo}.
So, if in addition \(\epsilon\in \mathopen{(}0,\rho\mathopen{)}\) in \eqref{def:adaptive_rho_armijo}, then \({\hat{\rho}(v(\alpha_{k})) < \rho}\). 
To meet the second goal, the returned step-size must be non-trivially lower bounded.
To this end, in \cref{subsec:theory} we show that if the objective function is Lipschitz-smooth, then ABLS returns a step-size on par with the greatest step-size that is guaranteed to satisfy \eqref{ineq:Armijo}.
For now, we note that if~\eqref{ineq:Armijo} is violated, then \(1-c\cdot v(\alpha_{k}) > 0\), since \(c\in \mathopen{(}0,1\mathopen{)}\) by assumption.
Thus, \(\hat{\rho}(v(\alpha_{k}))\) is bounded away from zero.
Moreover, that same bound applies to BLS.
Similar conclusions can be reached if the BLS criterion is \eqref{ineq:lipschitz_FISTA} and \(\hat{\rho}\) and \(v\) are chosen as \eqref{def:adaptive_rho_FISTA} and \eqref{def:violation_lipschitz}.

\subsection{The scope of theoretical guarantees for a backtracking subroutine}
\label{subsec:theory-scope}

Before characterizing adaptive backtracking theoretically, we state a theoretical bound on how well a general line search procedure can do relative to regular backtracking.
Then, we state three further facts that substantiate why comparing line search procures is hard.
These statements are consequences of simple examples described in \cref{subsec:app-theory-examples}.

The fundamental obstacle we face in theoretically comparing adaptive backtracking, or any other line search procedure, with regular backtracking is described by \cref{app:example-rb-can-be-optimal}, which shows that
\begin{quote}
    \textbf{\emph{no line search procedure can be provably better than regular backtracking to enforce any set of line search criteria that includes the Armijo condition or the descent lemma, for any class of functions that includes quadratics.}}
\end{quote}
We compare line search procedures on the basis of the number of adjustments to return a feasible step-size and the length of the returned step-size.
To establish this fact, in \cref{subsec:app-theory-examples} we construct a simple scalar quadratics example in which regular backtracking returns the greatest feasible step-size after one adjustment and furthermore this step-size leads to the minimum.
Also in \cref{subsec:app-theory-examples}, we present a similar example for the descent lemma.

In general, comparing line search procedures is challenging because different procedures return different step-sizes, which lead to different optimization paths.
For example, a procedure that reduces step-sizes more in each adjustment need not require fewer adjustments (function evaluations) overall than a procedure that reduces step sizes less because each procedure leads to a different sequence of iterates.
In fact, comparison is challenging \emph{even for a single backtracking call} because:
\begin{enumerate}
    \item the step-size returned by backtracking is not monotone in \(\rho\), \emph{even for convex problems}.
    \item For nonconvex problems, given step-sizes \(\alpha'\) and \(\alpha\) with \(\alpha'<\alpha\), \(\alpha\) being feasible does not imply \(\alpha'\) is also feasible. 
    \item For nonconvex problems, decreasing \(\rho\) may increase the number of criteria evaluations required to compute a feasible step-size.
\end{enumerate}
\cref{app:example-ss-not-monotone} establishes the first fact and \cref{app:example-ss-feas-not-monotone} establishes the second and third facts.

With the above in mind, in the following we provide the best set of performance guarantees that one can hope for.
Namely, we show that adaptive backtracking requires no more function evaluations than regular backtracking to return a feasible step-size, and that the step-size lower bounds for the two procedures are the same, which leads to the same global convergence rate results.

\subsection{Theoretical results}
\label{subsec:theory}

In this subsection, we present theoretical results regarding regular and adaptive backtracking.
Several additional convergence results and full proofs can be found in~\cref{sec:app-theory}.

\noindent {\bf Convex problems.}
We show that only the first fact above holds for convex problems, which expands the extent to which two backtracking subroutines can be compared.

\begin{proposition}\label{prop:convex-monotone-step-size-feasibility}
    Let \(F\) be convex differentiable.
    Given a point \(x_{k}\), a direction \(d_{k}\) and a step-size \(\alpha_{k}>0\) satisfying \eqref{ineq:Armijo} for some \(c\), then \(x_{k}\), \(d_{k}\) and \(\alpha_{k}'\) also satisfy \eqref{ineq:Armijo} for any \(\alpha_{k}'\in \mathopen{(}0,\alpha_{k}\mathopen{)}\).
\end{proposition}

The following proposition refers to ``compatible inputs.'' By this we mean:

\begin{definition}[Compatibility]
    The inputs to \cref{alg:BLS,alg:ABLS} are said to be \emph{compatible} if \(\alpha_{0}, c, v\) coincide and the input \(\hat{\rho}\) to \cref{alg:ABLS} is parameterized by the same \(\rho\) that \cref{alg:BLS} takes as input.
\end{definition}

\begin{proposition}\label{prop:convex}
    Let \(F\) be convex differentiable.
    Fixing all other inputs, the number of function evaluations that \cref{alg:BLS} and \cref{alg:ABLS} take to return a feasible step-size is nondecreasing in the input \(\rho\).
    Moreover, given compatible inputs with a descent direction and \(\epsilon < \rho\), \cref{alg:ABLS} takes no more function evaluations to return a feasible step-size than \cref{alg:BLS} does.
\end{proposition}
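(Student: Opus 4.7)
The proposition has two claims: monotonicity of the iteration count in $\rho$ for each of Algorithms 1 and 2, and the comparison that Algorithm 2 uses no more evaluations than Algorithm 1 under compatible inputs. My first move is to strengthen Proposition 1 into a structural fact. Setting $g(\alpha):=F(x_k+\alpha d_k)$ and $\phi(\alpha):=(g(\alpha)-g(0))/\alpha$, convexity of $F$ makes $\phi$ nondecreasing in $\alpha$; since $v(\alpha)=\phi(\alpha)/(c\langle\nabla F(x_k),d_k\rangle)$ from~\eqref{def:violation_armijo} divides $\phi$ by a negative quantity (the descent-direction assumption), $v$ is nonincreasing in $\alpha$. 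Therefore the Armijo-feasible set is a downward-closed interval $(0,\alpha^*]$ with $v(\alpha^*)=1$, and the termination condition $v(\alpha_k)\ge 1$ is equivalent to $\alpha_k\le\alpha^*$.

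For Algorithm 1, the tentative sequence $\alpha_k=\rho^k\alpha_0$ gives iteration count $\max\{0,\lceil\log(\alpha^*/\alpha_0)/\log\rho\rceil\}$, which is elementarily nondecreasing in $\rho\in(0,1)$ because $\log\rho$ is negative and rises toward $0$ as $\rho\uparrow 1$. For the comparison statement, the key one-step inequality is $\hat\rho(v)\le\rho$ whenever $v<1$: the algebraic identity $(1-c)/(1-cv)\le 1$ holds iff $v\le 1$, and the hypothesis $\epsilon<\rho$ keeps the floor below $\rho$. A direct induction then yields $\alpha_k^{\text{ABLS}}\le\rho\,\alpha_{k-1}^{\text{ABLS}}\le\cdots\le\rho^k\alpha_0=\alpha_k^{\text{BLS}}$ at every iteration. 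Once Algorithm 1 terminates at some $k^*$ with $\alpha_{k^*}^{\text{BLS}}\le\alpha^*$, the downward-closedness of the feasible interval transfers feasibility to the smaller $\alpha_{k^*}^{\text{ABLS}}$, so Algorithm 2 terminates no later than $k^*$.

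The delicate part is monotonicity in $\rho$ for Algorithm 2. My plan is a coupled pathwise induction: for $\rho_1<\rho_2$, run the two ABLS processes in parallel from the shared $\alpha_0$ and maintain the invariant $\alpha_k^{(\rho_1)}\le\alpha_k^{(\rho_2)}$ while both are active; once the $\rho_2$-iterate becomes feasible, downward-closedness transfers feasibility to the smaller $\rho_1$-iterate, forcing the $\rho_1$-run to terminate no later. The obstruction to a one-line induction is that, writing $\tilde s(\alpha):=(1-c)\alpha/(1-cv(\alpha))=(1-c)|\langle\nabla F(x_k),d_k\rangle|\alpha^2/w(\alpha)$ with $w(\alpha):=g(\alpha)-g(0)-\alpha g'(0)$ the (convex, nonnegative) Bregman gap of $g$, the update $\alpha_{k+1}=\max(\epsilon\alpha_k,\rho\tilde s(\alpha_k))$ need not be monotone in $\alpha_k$ for general convex $g$. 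I would resolve this by a case split on $\tilde s$ along the relevant interval $[\alpha_k^{(\rho_1)},\alpha_k^{(\rho_2)}]$: where $\tilde s$ is nonincreasing, the identity $\tilde s(\alpha^*)=\alpha^*$ gives $\tilde s(\alpha)\le\alpha^*$ for every infeasible $\alpha$, so the next iterate is at most $\rho\alpha^*<\alpha^*$ and both runs terminate within one further step irrespective of $\rho$ (making monotonicity trivial); where $\tilde s$ is nondecreasing, $\tilde s(\alpha_k^{(\rho_1)})\le\tilde s(\alpha_k^{(\rho_2)})$ combined with $\rho_1\le\rho_2$ directly yields $\rho_1\tilde s(\alpha_k^{(\rho_1)})\le\rho_2\tilde s(\alpha_k^{(\rho_2)})$, closing the induction. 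The $\epsilon$-floor enters linearly in $\alpha$ and is folded into the same case split; the only subtle sub-case, where the floor is active for $\rho_2$ but not $\rho_1$, is handled using the bound $\tilde s(\alpha)\le\alpha$ on infeasible $\alpha$ (which follows from $w(\alpha)/\alpha\ge w(\alpha^*)/\alpha^*=(1-c)|\langle\nabla F(x_k),d_k\rangle|$ by convexity of $w$).
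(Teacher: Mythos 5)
Your treatment of the two claims the paper actually proves is correct and follows essentially the paper's route: you establish that the Armijo-feasible set is a downward-closed interval $(0,\alpha^{\ast}]$ (the paper gets this from the convex combination $x_k+\alpha' d_k=\beta(x_k+\alpha d_k)+(1-\beta)x_k$, you get it from monotonicity of the difference quotient --- the same fact), you deduce $\rho$-monotonicity of \cref{alg:BLS} from the explicit geometric sequence $\rho^k\alpha_0$ where the paper argues by minimality of $N_i$, and you obtain the comparison between \cref{alg:ABLS} and \cref{alg:BLS} from the one-step bound $\hat\rho(v)\le\rho$ for $v<1$ together with the coupling $\alpha_k^{\mathrm{ABLS}}\le\rho^k\alpha_0$ and downward-closedness. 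That is exactly the paper's argument for those two claims.

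The gap is in the part you yourself flag as delicate: $\rho$-monotonicity of \cref{alg:ABLS}. (Notably, the paper's appendix states and proves the monotonicity-in-$\rho$ claim only for \cref{alg:BLS} and never addresses it for \cref{alg:ABLS}, so here you are attempting something the paper leaves unproved.) Your case split is not exhaustive: with $\tilde s(\alpha)=C\alpha^2/w(\alpha)$ and $w$ an arbitrary nonnegative convex Bregman gap, $\tilde s$ is nondecreasing where $\alpha w'(\alpha)\le 2w(\alpha)$ and nonincreasing where the reverse holds, and $w$ can cross between these regimes, so $\tilde s$ need not be monotone in either direction on the coupling interval $[\alpha_k^{(\rho_1)},\alpha_k^{(\rho_2)}]$. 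Moreover, in your ``nonincreasing'' branch the conclusion $\tilde s(\alpha)\le\alpha^{\ast}$ requires $\tilde s$ to be nonincreasing on all of $[\alpha^{\ast},\alpha]$, not merely on the coupling interval, so that branch does not close even where it applies. The only uniform bound available is $\tilde s(\alpha)\le\alpha$ for infeasible $\alpha$, which suffices for the ABLS-versus-BLS comparison but not for comparing two ABLS runs: if $w$ grows sharply between $\alpha_k^{(\rho_1)}$ and $\alpha_k^{(\rho_2)}$ one can have $\rho_1\tilde s(\alpha_k^{(\rho_1)})>\rho_2\tilde s(\alpha_k^{(\rho_2)})$, and the invariant $\alpha_k^{(\rho_1)}\le\alpha_k^{(\rho_2)}$ breaks. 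Establishing this third claim would require either an additional structural property of $\tilde s$ or a genuinely different, non-pathwise argument.
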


\noindent {\bf Nonconvex problems.}
For convex problems, we were able to compare the number of times regular and adaptive backtracking must evaluate their criteria in order to return a single feasible step-size.
But what really matters is the total number of criteria evaluations up to a given iteration.
We bound this number for general nonconvex problems, hinging on the following properties.\footnote{Instead of the usual condition that \(\Vert \nabla F(x) - \nabla F(y) \Vert \leq L \Vert x-y \Vert\) hold for all \(x,y\), we adopt the equivalent \cite[Thm. 2.1.5.]{Nesterov2018} condition \eqref{ineq:descent_lemma} as the definition of Lipschitz-smoothness for the sake of convenience.}

\begin{definition}[Smoothness]\label{def:lipschitz}
    A function \(F\) is said to be \emph{Lipschitz-smooth} if \eqref{ineq:descent_lemma} holds for some \(L>0\).
\end{definition}

\begin{definition}[Gradient related]\label{def:grad-related}
    The directions \(d_{k}\) are said to be \emph{gradient related} if there are \(c_{1}>0\) and \(c_{2}>0\) such that \({\langle \nabla F(x_{k}), -d_{k} \rangle \geq c_{1} \Vert \nabla F(x_{k}) \Vert^{2}}\) and \({\Vert d_{k} \Vert \leq c_{2}\Vert \nabla F(x_{k}) \Vert}\), for all \(k\geq 0\).
\end{definition}

\begin{assumption}\label{ass:lipschitz}
    We assume \(F\) is Lipschitz-smooth and \(d_{k}\) are gradient related.
\end{assumption}

Gradient relatedness ensures that \(d_{k}\) is not ``too large'' or ``too small'' with respect to \(\nabla F(x_{k})\) and that the angle between \(d_{k}\) and \(\nabla F(x_{k})\)  is not ``too close'' to being perpendicular \cite[p.~41]{Bertsekas2016}.
Together with \(c_{1}\) and \(c_{2}\), the Lipschitz constant \(L\) and Armijo constant \(c\) define a step-size threshold
\(\bar{\alpha} = 2c_{1}(1-c)/Lc_{2}^{2}\) below which \eqref{ineq:Armijo} holds.
This quantity is central in the following result.

\begin{itheorem*}[Armijo]
    Let \(F\) be Lipschitz-smooth and \(d_{k}\) gradient related.
    Given compatible inputs, if \(\epsilon < \rho\) and \(v, \hat{\rho}\) are chosen as \eqref{def:armijo-params}, then \cref{alg:BLS,alg:ABLS} share the same bounds on the total number of backtracking criteria evaluations up to any iteration.
    If \(\alpha_{k}\) is received as the initial step-size input at iteration \(k+1\) for all \(k\geq 0\), then they evaluate \eqref{ineq:Armijo} at most \(\floor{\log_{\rho}(\bar{\alpha}/\alpha_{0})} + 1 + k\) times up to iteration \(k\).
    If, on the other hand, \(\alpha_{0}\) is received as the initial step-size input at every iteration, then they evaluate \eqref{ineq:Armijo} at most \(k(\floor{\log_{\rho}(\bar{\alpha}/\alpha_{0})} + 1)\) times up to iteration \(k\).
    Moreover, \cref{alg:BLS,alg:ABLS} always return a step-size \(\alpha_{k}\) such that \(\alpha_{k}\geq \min(\alpha_{0}, \rho\bar{\alpha})\).
\end{itheorem*}

\begin{itheorem*}[Descent lemma]
    Let \(f\) be Lipschitz-smooth convex and let \(\psi\) be continuous convex.
    Also, suppose \(v\) and \(\hat{{\rho}}\) are chosen as \eqref{def:violation_lipschitz} and \eqref{def:adaptive_rho_FISTA}.
    If \(\alpha_{k} \in \mathopen{(}0,1/L\mathclose{)}\), then \eqref{ineq:lipschitz_FISTA} holds for all \(y_{k}\).
    If \cref{alg:BLS,alg:ABLS} receive \(\alpha_{k}\) as the initial step-size input in iteration \(k+1\), then they evaluate \eqref{ineq:lipschitz_FISTA} at most \(\floor{\log_{\rho}(1/L\alpha_0)} + 1 + k\) times up to iteration \(k\).
    If, on the other hand, \cref{alg:BLS,alg:ABLS} receive \(\alpha_{0}\) as the initial step-size input in every iteration, then they evaluate \eqref{ineq:lipschitz_FISTA} at most \(k(\floor{\log_{\rho}(1/L\alpha_0)} + 1)\) times up to iteration \(k\).
    Moreover, they return a feasible step-size \(\alpha_{k}\) such that \(\alpha_{k} \geq \min\left\{ \alpha_{0}, \rho/L \right\}\).
\end{itheorem*}

\section{Future work: further applications, extensions and limitations}
\label{sec:future_work}

Adaptive backtracking is a general idea that can be broadly applied in a variety of settings. 
Our goal in this paper was to rigorously validate it in classical machine learning and optimization problems.
This section outlines several promising directions for future work and speculate about potential limitations. 

\subsection{Further applications and extensions}
\label{subsec:further-applications}

{\bf Stochastic line search.} 
In machine learning, models such as over-parameterized neural networks are sufficiently expressive to \textit{interpolate} immense datasets \citep{Zhang2016,Ma2018}.
Interpolation provides theoretical foundation for the stochastic line search (SLS) proposed by \citet{vaswani2019painless}, which enforces the Armijo condition on the training mini-batches.
In the same vein, \citet{galli2023dont} replaced the Armijo condition in SLS with a nonmonotone criterion and used Polyak's step-size to devise an initial step-size heuristic, obtaining the Polyak Nonmonotone Stochastic (PoNoS) method.
Below, we reproduce experiment 1 from \citep{galli2023dont} to demonstrate the potential of applying ABLS in combination with stochastic line search in the interpolating regime.
\cref{fig:MLP_MNIST} shows that combining ABLS with PoNoS leads to good test accuracy in fewer epochs (details in \cref{sec:app-stochastic}.)
We defer fully developing this application to future work.
\begin{figure}[!ht]
    \centering
    \includegraphics[width=0.9\linewidth]{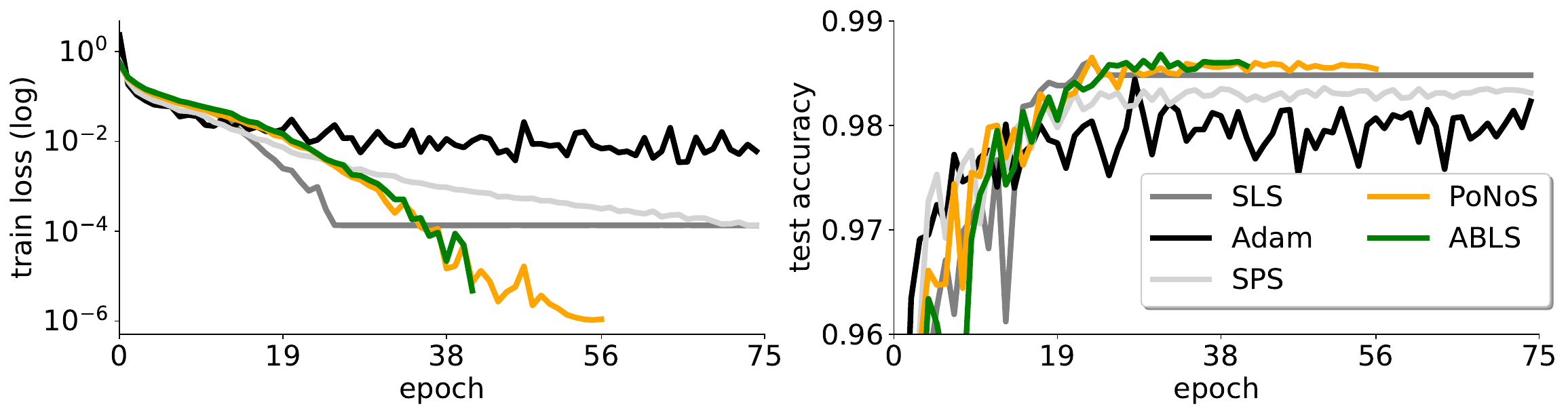}
    \caption{MLP trained on MNIST with different algorithms.}
    \label{fig:MLP_MNIST}
\end{figure}

{\bf Additional line search criteria.}
Adaptive backtracking may be useful to enforce other conditions that are affine in the step-size.
A prominent candidate are the Goldstein conditions \citep{Goldstein1967}, which comprise two inequalities that are affine in the step-size.
Nonmonotone line search criteria \citep{grippo1986nonmonotone,zhang2004nonmonotone} also offer several candidates.

{\bf Additional algorithms and increasing step-sizes.}
In this paper, we experimented with increasing step-sizes through memoryless line search, where the initial step-size is fixed for every iteration, but other schemes are possible.
For example, adaptive adjustments can also be used to increase step-sizes.
In addition, adaptive backtracking can replace regular backtracking in line search methods that increase the current step-size and then use it as the initial step-size, such as the two-way method \citep{Truong2021} and FISTA variants \citep{Scheinberg2014,Calatroni2019,Rebegoldi2022}.
Also, it would be interesting to see how adaptive backtracking works together with schemes that handle problems where the strong convexity constant is unknown, namely restarting schemes \cite{Becker2011,ODonoghue2015,Aujol2024}.
Finally, we note that the violation of a condition can also be used indirectly to adjust step-sizes, for example to pick the degree to which a fixed \(\rho\) is exponentiated, saving backtracking cycles. 

\subsection{Limitations}
\label{subsec:limitations}

The weak and strong Wolfe conditions \citep{Wolfe1969} are not affine in the step-size and are not satisfied by arbitrarily small step-sizes.
Hence, Wolfe conditions are not enforced by backtracking (e.g., \citet[pp.~60--61]{Nocedal2006}) and it is unclear how to find analogous adaptive schemes.
In turn, quasi-Newton methods, which often must enforce Wolfe conditions to guarantee global convergence, may not be suitable candidates for adaptive line search subroutines.
In reality, the role of line search for these methods is to guarantee they converge globally rather than finding the ``right'' step-size, since they work with unit step-size locally.
Hence, only few adjustments may be necessary.
The same applies to Newton's method and the Barzilai--Borwein method \citep{Barzilai1988}.

It is also unclear if adaptive adjustments can be useful for more general stochastic line search methods that do not rely on the interpolation property \citep{Cartis2017,Paquette2020}.
Instead, they resample function and gradient mini-batches in every loop, whether a sufficient descent condition is violated or not.
But the information conveyed by the violation for one sample need not be relevant to satisfy the same condition with a different sample, which poses a potential limitation.

\bibliography{iclr2025_conference}
\bibliographystyle{iclr2025_conference}

\newpage
\appendix

\section{Stochastic line search example}
\label{sec:app-stochastic}

The results presented in \cref{fig:MLP_MNIST} correspond to experiment 1 from \citep{galli2023dont} without any modifications, and are run using the base code from the same paper, which can be found at:
{\begin{center}
    \url{https://github.com/leonardogalli91/PoNoS}.
\end{center}}

In this experiment, a multilayer perceptron (MLP) with a single layer, width 1000 and 535818 parameters is trained on the MNIST dataset \citep{LeCun1998} until the training loss becomes less than \(10^{-6}\).
This is the same stopping criterion adopted in \citep{galli2023dont}.
To train the MLP, the following methods are used:
\begin{enumerate}
    \item Adam: adaptive moment estimation method \citep{Kingma2015}.
    \item SLS: stochastic line search \citep{Vaswani2019}.
    \item SPS: stochastic Polyak step-size method \citep{Loizou2021}.
    \item PoNoS: Polyak nonmonotone stochastic method \citep{galli2023dont}.
    \item ABLS: an adaptive backtracking variant of PoNoS, detailed below.
\end{enumerate}

As \cref{fig:MLP_MNIST} shows, only PoNoS and ABLS terminate within 75 epochs.
PoNoS does so after 56 epochs and 583 seconds, while ABLS finishes in 41 epochs and 464 seconds.

For all the above methods, we preserve the parameters recommended in \citep{galli2023dont} unaltered from the source code.
The ABLS method combines our adaptive backtracking procedure with a simplified version of PoNoS.
Namely, PoNoS generates initial step-sizes with
\begin{align}
    \eta_{k} = \eta_{k,0}\delta^{\bar{l}_{k} + l_{k}},
\end{align}
where  \(\delta \in \mathopen{(}0,1\mathopen{)}\) and \(l_{k}\) is the amount of backtracks in iteration \(k-1\), which are accounted for in
\begin{align*}
    \bar{l}_{k} = \max\{\bar{l}_{k-1} + l_{k-1} -1, 0\}.
\end{align*}
That is, previous backtracks are used to discount an initial step-size given by
\begin{align*}
    \eta_{k,0} = \min\{\tilde{\eta}_{k,0}, \eta^{\max}\},
\end{align*}
which in turn is based on the Polyak initial step-size
\begin{align}
    \tilde{\eta}_{k,0} = \frac{f_{i_{k}}(w_{k}) -f_{i_{k}}^{\ast}}{c_{p}\Vert \nabla f_{i_{k}}(w_{k}) \Vert^{2}},
    \label{def:Polyak}
\end{align}
where \(c_{p} \in \mathopen{(}0,1\mathopen{)}\) is a hyperparameter, \(i_{k}\) denotes the mini-batch sampled in iteration \(k\), \(w_{k}\) denotes the MLP parameters in iteration \(k\) and \(f^{\ast}_{i_{k}}\) refers to the minimum of the mini-batch training loss:
\begin{align*}
    f_{i_{k}} = \frac{1}{\vert i_{k} \vert}\sum_{j \in i_{k}}f_{j}.
\end{align*}
Instead, we simply use \eqref{def:Polyak} as the initial step-size, with \(c_{p}=1/2\), the same value proposed in \citep{galli2023dont}.
Then, we apply ABLS to enforce
\begin{align}
    f_{i_{k}}(w_{k} - \eta_{k}\nabla f_{i_{k}}(w_{k}))
    \leq C_{k} -c\eta_{k}\Vert \nabla f_{i_{k}} \Vert^{2},
    &&
    c\in \mathopen{(}0,1\mathopen{)},
    \label{ineq:Zhang}
\end{align}
where \(C_{k}\) denotes an exponential moving average of losses given by
\begin{align*}
    C_{k} = \max\{ \tilde{C}_{k}, f_{i_{k}}(w_{k}) \},
    &&
    \tilde{C}_{k} = \frac{\xi Q_{k}C_{k-1} + f_{i_{k}}(w_{k})}{Q_{k+1}},
    &&
    Q_{k+1} = \xi Q_{k} + 1
\end{align*}
with \(\xi \in \mathopen{(}0,1\mathopen{)}\).
The inequality \eqref{ineq:Zhang} is a stochastic variant enforced by PoNoS of the deterministic criterion proposed by \cite{zhang2004nonmonotone}.
This inequality can be seen as a generalization of the Armijo condition with the current loss replaced with an average.
As such, it preserves the structure of \eqref{ineq:Armijo}, which allows us to seamlessly apply \eqref{def:adaptive_rho_armijo}, with \(\rho=0.9\) and \(\epsilon=0.5\).

\newpage
\section{Examples, theorems and proofs}
\label{sec:app-theory}

In this section, we present the examples mentioned in \cref{sec:theory}, prove the results stated therein and provide further convergence results.





\subsection{Examples}
\label{subsec:app-theory-examples}

The first example establishes the fundamental obstacle that 
\begin{quote}
    \textbf{\emph{no line search procedure can be provably better than regular backtracking to enforce any set of line search criteria that includes the Armijo condition or the descent lemma, for any class of functions that includes quadratics.}}
\end{quote}
The second and third examples establish three more facts:
\begin{enumerate}
    \item The step-size returned by backtracking is not monotone in \(\rho\), \emph{even for convex problems}.
    \item For nonconvex problems, given step-sizes \(\alpha'\) and \(\alpha\) with \(\alpha'<\alpha\), \(\alpha\) being feasible does not imply \(\alpha'\) is too. 
    \item For nonconvex problems, decreasing \(\rho\) may increase the number of criteria evaluations required to compute a feasible step-size.
\end{enumerate}

\begin{example}[Fundamental obstacle]\label{app:example-rb-can-be-optimal}
    Let \(F\) be defined by \(F(x)=x^{2}/2\).
    If \(x_{k}\neq 0\) and \(d_{k}\) is a descent direction, then by definition \(\langle \nabla F(x_{k}), d_{k} \rangle = d_{k}x_{k} < 0\) and it must be that \(d_{k}=-c_{1}x_{k}\) for some \(c_{1} > 0\).
    Hence, given some \(c \in \mathopen{(}0,1\mathopen{)}\), \eqref{ineq:Armijo} holds if and only if
    \begin{align*}
        \frac{1}{2}(1 - \alpha_{k}c_{1})^{2}x_{k}^{2}
        = F(x_{k+1}) 
        \leq F(x_{k}) + c\alpha_{k}\langle \nabla F(x_{k}), d_{k} \rangle
        = \frac{1}{2}x_{k}^{2} - c\alpha_{k}c_{1}x_{k}^{2}.
    \end{align*}
    Therefore, \eqref{ineq:Armijo} holds if and only if
    \begin{align*}
        c_{1}(2(1-c) - c_{1}\alpha_{k})\alpha_{k}x_{k}^{2}
        = (1 - 2cc_{1}\alpha_{k} - (1 - c_{1}\alpha_{k})^{2})x_{k}^{2} 
        \geq 0,
    \end{align*}
    or, equivalently, \(\alpha_{k}\leq 2(1-c)/c_{1}\), since \(x_{k}^{2} > 0\).
    Now, suppose that \(\alpha_{0} = 4(1-c)/c_{1} > 0\) and \(\rho = 1/2\).
    Then, after testing \eqref{ineq:Armijo} exactly once, backtracking returns the step-size \(\alpha_{k}=2(1-c)/c_{1}\), the greatest value that is guaranteed to satisfy \eqref{ineq:Armijo}.
    Thus, in this example, backtracking is optimal in the sense that it tests \eqref{ineq:Armijo} only once to return the greatest feasible step-size possible.
    Therefore, no other line search procedure can be provably better than backtracking to enforce any set of line search criteria that includes the Armijo condition for any class of functions that includes quadratics.

    For the sake of concreteness, \cref{fig:example_optimal} illustrates the particular case where \(c=1/2\), \(x_{0}=-1\), \(d_{0}=-\nabla f(x_{0})=1\), \(\alpha_{0}=2\) and \(\rho=1/2\).
    The initial step-size \(\alpha_{0}=2\), is too large and leads to a tentative iterate \(x_{0}+\alpha_{0}d_{0}=1\) that lies outside of the shaded region of iterates allowable by the Armijo condition.
    After one adjustment, however, the step-size is reduced to \(\rho\alpha_{0}=1/2\), leading to the tentative iterate \(x_{0}+\rho\alpha_{0}d_{0}=0\), which lies at the boundary of the region of allowable iterates by the Armijo condition.
    That is, regular backtracking returns the greatest feasible step-size after one adjustment.
    Furthermore, this step-size leads to the minimum, therefore solving the problem after one iteration and one adjustment.
    
    Next, we show an analogous result for the descent lemma.
    Keeping \(F(x)=x^{2}/2\), for any \(x\neq y\), \eqref{ineq:descent_lemma} holds with an estimate \(L_{k}\) of the Lipschitz constant if and only if
    \begin{align*}
        \frac{x^{2}}{2}
        = F(y)
        \leq F(y) + \langle \nabla f(y), x - y \rangle + \frac{L_{k}}{2}\Vert x - y \Vert^{2}
        = \frac{L_{k}-1}{2}(y^{2} -2xy) + \frac{L_{k}}{2}x^{2},
    \end{align*}
    which is equivalent to \(L_{k}\geq 1\).
    Hence, if \(L_{0}\in \mathopen{(}0,1\mathopen{)}\) and \(\rho=L_{0}\), then backtracking returns the optimal estimate \(L_{0}/\rho = 1\) after testing \eqref{ineq:descent_lemma} exactly once (requiring two function evaluations), no matter what \(x\neq y\) are.
    Hence, in this example, backtracking is optimal in the sense that it tests \eqref{ineq:descent_lemma} only once to return the tightest Lipschitz constant estimate possible.
    Therefore, no other line search procedure can be provably better than backtracking to enforce any set of line search criteria that includes the descent lemma for any class of functions that includes quadratics.
    
    As a side note, we look into what adaptive backtracking would do.
    For this problem,  \eqref{def:violation_lipschitz} becomes
    \begin{align*}
        v(\alpha_{k})
        = \frac{\frac{1}{2\step} \Vert p_{\alpha_k}(y_{k}) - y_{k} \Vert^{2}}{f(p_{\alpha_k}(y_{k})) - f(y_{k}) - \langle \nabla f(y_{k}), p_{\alpha_k}(y_{k}) - y_{k} \rangle}
        = L_{k}.
    \end{align*}
    Therefore, after one function evaluation, adaptive backtracking returns \(\rho v(\alpha_{k})\alpha_{k} = \rho\), which matches the theoretical lower bound of backtracking and corresponds to the Lipschitz constant estimate \(1/\rho\).
\end{example}

\begin{figure}
     \centering
     \includegraphics[width=0.5\textwidth]{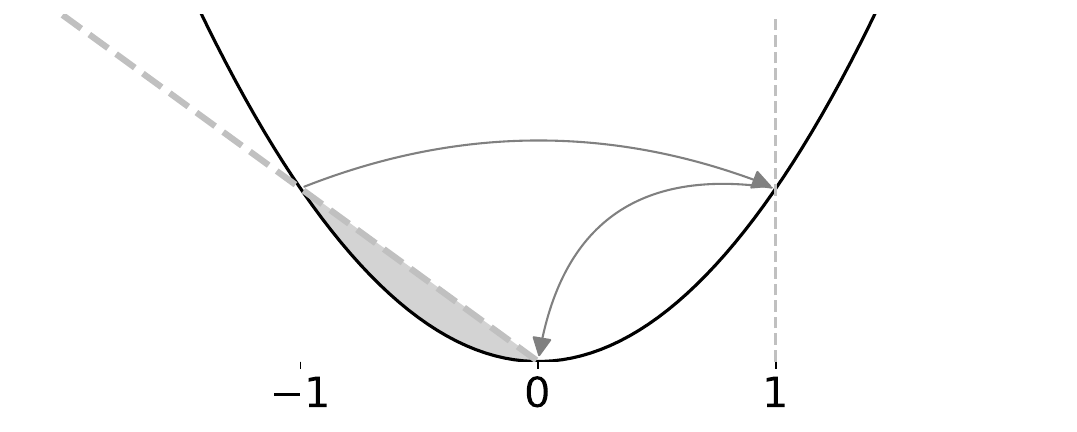}
     \caption{Regular backtracking returns the greatest feasible step size after one adjustment.}
     \label{fig:example_optimal}
\end{figure}

\begin{example}[Fact 1]\label{app:example-ss-not-monotone}
    Let \(F\) be defined by \(F(x)=x^{2}\) and fix \(x_{k}=-1\), \(d_{k}=-F'(x_{k})=-2x_{k}=2\).
    Then, the Armijo condition with \(c=1/4\) is satisfied if and only if
    \begin{align*}
        (-1 + 2\alpha_{k})^{2}
        \leq 1 -\alpha_{k}.
    \end{align*}
    To find the critical step-size values for which this inequality is satisfied, we simply solve a second-order equation, which gives the positive value of \(\alpha_{k}^{\ast}=0.75\) with corresponding iterate \(x_{k}+\alpha_{k}^{\ast}d_{k} = 0.5\).
    Thus, if the initial tentative step-size is \(\alpha_{k}=1\), which produces the tentative iterate is \(1\), then the Armijo condition is not satisfied and the step-size must be adjusted.
    If \(\rho = 0.75\), then the adjusted step-size \(0.75\) produces the tentative iterate \(0.5\) and the Armijo condition is satisfied, therefore the step-size requires no further adjustments.
    On the other hand, if \(\rho=0.8\), then the adjusted step-size \(0.8\) produces the tentative iterate \(0.6\) and the Armijo condition is not satisfied.
    Adjusting the step-size once more produces a step-size of \(0.64<\alpha_{k}^{\ast}\) and an corresponding iterate \(x_{k}+\alpha_{k}d_{k}=-1 + 0.64\cdot 2 = 0.28\), satisfying the Armijo condition.
    Therefore, increasing \(\rho=0.75\) to \(\rho=0.8\) decreases the step-size that backtracking returns.
\end{example}

\begin{figure}[b]
     \centering
     \includegraphics[width=0.5\textwidth]{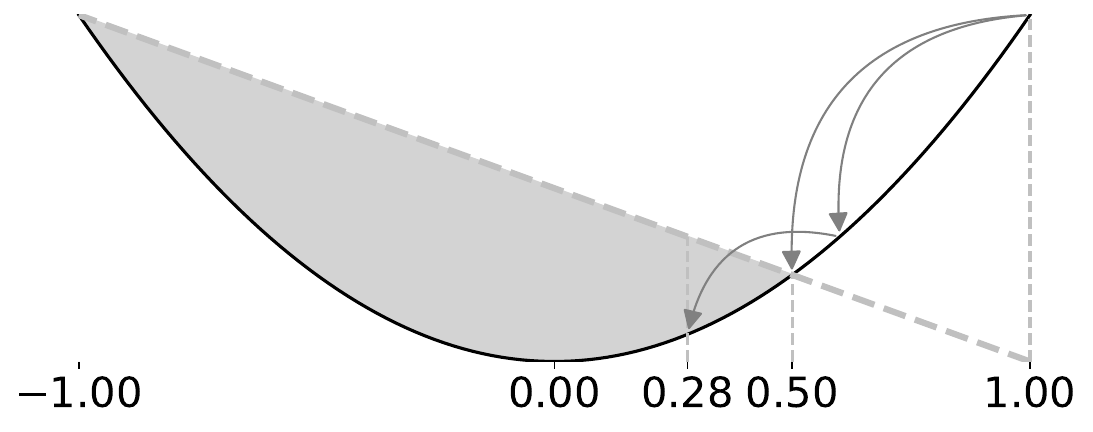}
     \caption{Reducing \(\rho\) can lead to greater step-sizes.}
     \label{fig:example_convex}
\end{figure}

\begin{example}[Facts 2 and 3]\label{app:example-ss-feas-not-monotone}
    Let \(F\) be defined by \(F(x)=\cos x -a x\), where \(a=\frac{1}{5\pi}\), fix \(x_{k}=\frac{\pi}{2}\) and
    \begin{align*}
        d_{k}=-F'(x_{k})=\sin x_{k} + a = 1 + a.
    \end{align*}
    Given the above choices, the Armijo condition parameterized by \(c=\frac{1}{2\pi}\) is satisfied if and only if
    \begin{align*}
        \cos\Bigl( \frac{\pi}{2} + (1+a)\alpha_{k} \Bigr) -a \Bigl( \frac{\pi}{2} + (1+a)\alpha_{k} \Bigr)
        \leq \cos\Bigl( \frac{\pi}{2} \Bigr) -\frac{a\pi}{2} - (1+a)^{2}\frac{\alpha_{k}}{2\pi},
    \end{align*}
    or, equivalently, if and only if
    \begin{align*}
        \cos\Bigl( \frac{\pi}{2} + (1+a)\alpha_{k} \Bigr) 
        \leq a(1+a)\alpha_{k} - (1+a)^{2}\frac{\alpha_{k}}{2\pi}.
    \end{align*}
    If the initial tentative step-size is picked as \(\frac{7\pi}{2(1+a)}\), then \((1+a)\alpha_{k}=\frac{7\pi}{2}\), so that
    \begin{align*}
        \cos\Bigl( \frac{\pi}{2} + 7\frac{\pi}{2} \Bigr)
        = 1 
        \geq -1.16
        \approx 7/10 -7\frac{5\pi + 1}{20\pi}.
    \end{align*}
    That is, the Armijo condition is not satisfied, therefore the step-size must be adjusted.
    If \(\rho=\frac{5}{7}\), then the step-size is adjusted to \(\frac{5\pi}{2(1+a)}\), so that \((1+a)\alpha_{k} = \frac{5\pi}{2}\) and the Armijo condition is satisfied, since
    \begin{align*}
        \cos\Bigl( \frac{\pi}{2} + (1+a)\alpha_{k} \Bigr)
        = \cos(3\pi)
        = -1
        \leq -0.83
        \approx \frac{1}{2} -5\frac{5\pi + 1}{20\pi}.
    \end{align*}
    On the other hand, if \(\rho=\frac{3}{7}\), then \((1+a)\alpha_{k}=\frac{3\pi}{2}\) and the Armijo condition is not satisfied, since
    \begin{align*}
        \cos\Bigl( \frac{\pi}{2} + (1+a)\alpha_{k} \Bigr)
        = 1
        \geq -0.5
        \approx 3/10 -3\frac{5\pi + 1}{20\pi}.
    \end{align*}
    Adjusting the step-size once more by \(\rho=\frac{3}{7}\) produces the step-size \(\frac{9\pi}{14(1+a)}\) and, in turn, the iterate \({x_{k} + \alpha_{k}d_{k} = \frac{\pi}{2} + \frac{9\pi}{14} = \frac{8\pi}{7} \approx 3.59}\), which is feasible since
    \begin{align*}
        \cos\Bigl( \frac{8\pi}{7} \Bigr)
        \approx
        -1.13
        \leq -0.21
        \approx \frac{9}{70} -9\frac{5\pi + 1}{140\pi}.
    \end{align*}

    In this example, the step-size \(\frac{3\pi}{2(1+a)}\) is not feasible although it is smaller than \(\frac{5\pi}{2(1+a)}\), which is feasible.
    More generally, this establishes that feasibility is not monotone in the step-size for nonconvex functions.
    Moreover, by reducing \(\rho=\frac{5}{7}\) to \(\rho=\frac{3}{7}\), backtracking must adjust the initial step-size one additional time.
    Therefore, reducing \(\rho\) might increase the number of criteria evaluations to return a feasible step-size.
\end{example}

\begin{figure}
     \centering
     \includegraphics[width=0.5\textwidth]{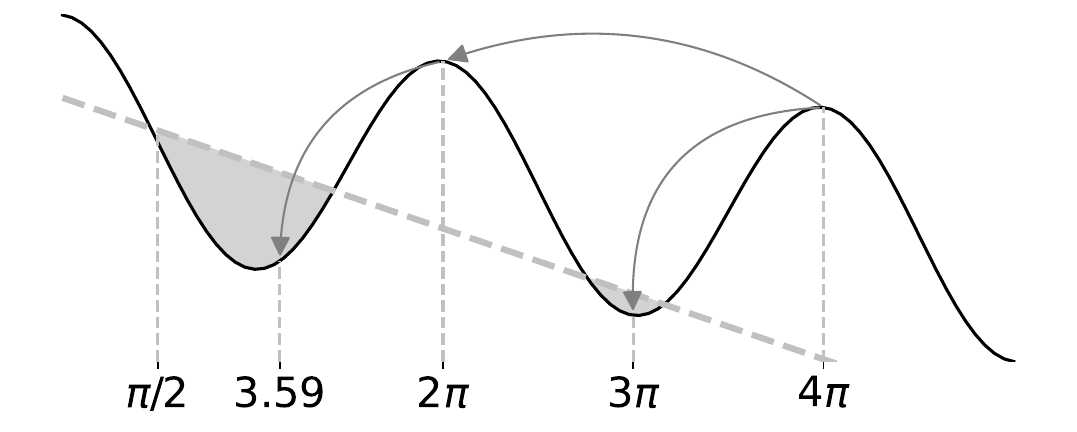}
     \caption{Reducing \(\rho\) can lead to more step-size adjustments.}
     \label{fig:example_nonconvex}
\end{figure}

\subsection{Convex problems}
\label{subsec:app-theory-convex}

In this subsection, we present and proof the results for convex problems stated in \cref{sec:theory}.

\begin{proposition}\label{prop:app-convex-step-size-feasibility-monotone}
    Let \(F\) be convex differentiable. 
    Given a point \(x_{k}\), a direction \(d_{k}\) and a step-size \(\alpha_{k}>0\) satisfying \eqref{ineq:Armijo} for some \(c\), then \(x_{k}\), \(d_{k}\) and \(\alpha_{k}'\) also satisfy \eqref{ineq:Armijo} for any \(\alpha_{k}'\in \mathopen{(}0,\alpha_{k}\mathopen{)}\).
\end{proposition}

\begin{proof}
    Let \(\beta \coloneqq  \alpha_{k}'/\alpha_{k} \in \mathopen{(}0,1\mathopen{)}\).
    Then, expressing \(x_{k} + \alpha_{k}'d_{k}\) as \(\beta(x_{k} + \alpha_{k} d_{k}) + (1-\beta)x_{k}\), we obtain
    \begin{align*}
        F(x_{k} + \alpha_{k}'d_{k})
        = F(\beta(x_{k} + \alpha_{k} d_{k}) + (1-\beta)x_{k})
        &\leq \beta F(x_{k} + \alpha_{k}d_{k}) + (1-\beta)F(x_{k})
        \\
        &\leq \beta (F(x_{k}) + c\alpha_{k}\langle \nabla F(x_{k}), d_{k}\rangle) + (1-\beta)F(x_{k})
        \\
        &= c\alpha_{k}'\langle \nabla F(x_{k}), d_{k}\rangle + F(x_{k}),
    \end{align*}
    where the first and second follow from \(F\) being convex and \(x_{k}\), \(d_{k}\) and \(\alpha_{k}\) satisfying \eqref{ineq:Armijo}, respectively.
\end{proof}

\begin{proposition}\label{prop:app-convex-criteria-evals-monotone}
    Let \(F\) be convex differentiable.
    Fixing all other inputs, the number of backtracking criteria evaluations that \cref{alg:BLS} takes to return a feasible step-size is nondecreasing in the input \(\rho\).
\end{proposition}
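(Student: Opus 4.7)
The plan is to translate the preceding proposition into a structural property of the feasible set of step sizes and then compare, for two values of $\rho$, the geometric sequences of tentative step sizes that \cref{alg:BLS} probes.

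First I would invoke \cref{prop:app-convex-step-size-feasibility-monotone}: since $F$ is convex and $v$ is the Armijo violation from \eqref{def:violation_armijo}, the feasible set
\[
    S \coloneqq \{\alpha > 0 : v(\alpha) \geq 1\}
\]
is \emph{downward-closed}, i.e., $\alpha \in S$ and $0 < \alpha' < \alpha$ imply $\alpha' \in S$. Next, for each $\rho \in (0,1)$, let $N(\rho)$ denote the number of times \cref{alg:BLS} evaluates $v$ on the given (otherwise fixed) inputs. The algorithm probes $v$ on the geometric sequence $\alpha_0, \rho \alpha_0, \rho^2 \alpha_0, \dots$ and terminates at the first index that lands in $S$, so
\[
    N(\rho) = n(\rho) + 1, \qquad n(\rho) \coloneqq \min\{n \geq 0 : \rho^n \alpha_0 \in S\},
\]
with the convention $n(\rho) = \infty$ if no such $n$ exists.

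The core step is then a one-line comparison. For $\rho_1 \leq \rho_2$ in $(0,1)$ and every $n \geq 0$ we have $\rho_1^n \alpha_0 \leq \rho_2^n \alpha_0$, so if $\rho_2^{n(\rho_2)} \alpha_0 \in S$ then by downward-closedness $\rho_1^{n(\rho_2)} \alpha_0 \in S$ too. Hence $n(\rho_1) \leq n(\rho_2)$, and thus $N(\rho_1) \leq N(\rho_2)$.

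I do not foresee a serious obstacle. The two points to be careful about are that the appeal to \cref{prop:app-convex-step-size-feasibility-monotone} is valid (needing convexity of $F$ and $v$ being the Armijo violation), and the degenerate case $n(\rho_2) = \infty$, in which the inequality $N(\rho_1) \leq N(\rho_2)$ holds vacuously regardless of $\rho_1$.
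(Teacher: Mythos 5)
Your proposal is correct and follows essentially the same route as the paper: both reduce the claim to the downward-closedness of the feasible step-size set (via \cref{prop:app-convex-step-size-feasibility-monotone}) and then compare the geometric sequences $\rho_1^n\alpha_0 \leq \rho_2^n\alpha_0$ termwise; the paper merely phrases the final comparison as a proof by contradiction on the minimality of $N_1$, whereas you argue it directly. The only cosmetic difference is the bookkeeping constant added to the adjustment count, which does not affect monotonicity.
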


\begin{proof}
    Consider the inputs \(\alpha_{0}, c\) and \(v\) to \cref{alg:BLS} fixed.
    Then, let \(0 < \rho_{1} < \rho_{2} < 1\) and let \(N_{1}\) and \(N_{2}\) denote the number of adjustments \cref{alg:BLS} takes to compute a feasible step-size when it receives respectively \(\rho_{1}\) and \(\rho_{2}\) as inputs.
    If \(\rho_{i}^{N_{i}}\alpha_{0}\) is a feasible step-size and \(N_{i}' > N_{i}\) for some \(i\in \{1,2\}\), then so is \(\rho_{i}^{N_{i}'}\alpha_{0}\), in view of the fact that \(\rho_{i}^{N_{i}} < \rho_{i}^{N_{i}'}\) and of \cref{prop:app-convex-step-size-feasibility-monotone}.
    Moreover, \cref{alg:BLS} must test if the step-size \(\rho_{i}^{N_{i}}\) is feasible before testing the step-size \(\rho_{1}^{N_{1}'}\) and therefore cannot return \(\rho_{1}^{N_{1}'}\alpha_{0}\).
    Inductively, we conclude that \(N_{1}\) and \(N_{2}\) are the least nonnegative integers such that \(\rho_{i}^{N_{i}}\) are feasible.
    Now, since \(\rho_{2}^{N_{2}}\) is feasible, if \(N_{1} > N_{2}\), then so is \(\rho_{1}^{N_{2}} < \rho_{2}^{N_{2}}\), in view of the assumption that \(\rho_{1} < \rho_{2}\) and of \cref{prop:app-convex-step-size-feasibility-monotone}.
    That is, \(N_{1}\) is not the least nonnegative integer such that \(\rho_{1}^{N_{1}}\) is feasible, a contradiction.
    Moreover, each adjustment requires evaluating the objective function once, so the total number of function evaluations \cref{alg:BLS} takes to return a feasible step-size is \(N_{i} + 2\).
    Therefore, if \cref{alg:BLS} receives \(\rho_{1}\) as input, then it takes {\bf no more function evaluations} to return a feasible step-size than if receives \(\rho_{2}\) as input.
    
\end{proof}

\begin{definition}
    The inputs to \cref{alg:BLS,alg:ABLS} are said to be \emph{compatible} if \(\alpha_{0}, c, v\) coincide and the input \(\hat{\rho}\) to \cref{alg:ABLS} is parameterized by the same \(\rho\) that \cref{alg:BLS} takes as input.
\end{definition}

\begin{proposition}
    Let \(F\) be convex differentiable.
    Given compatible inputs with a descent direction \(d_{k}\) and \(\epsilon < \rho\), \cref{alg:ABLS} takes no more function evaluations to return a feasible step-size than \cref{alg:BLS} does.
\end{proposition}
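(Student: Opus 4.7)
The plan is to demonstrate that Algorithm~\ref{alg:ABLS}'s multiplicative factor is strictly smaller than the constant $\rho$ whenever an infeasible trial step size is encountered, then to compare the two step-size sequences term-by-term and invoke \cref{prop:app-convex-step-size-feasibility-monotone} to conclude that Algorithm~\ref{alg:ABLS} terminates no later than Algorithm~\ref{alg:BLS}.

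First I would verify the key inequality $\hat{\rho}(v(\alpha_k)) < \rho$ whenever $v(\alpha_k) < 1$. Under \cref{ass:descent-direction} the denominator in \eqref{def:violation_armijo} is negative, so violation of \eqref{ineq:Armijo} is equivalent to $v(\alpha_k) < 1$. Then $c \cdot v(\alpha_k) < c$, so $1 - c \cdot v(\alpha_k) > 1 - c > 0$ (using $c \in (0,1)$), giving $\rho \cdot \tfrac{1-c}{1 - c \cdot v(\alpha_k)} < \rho$. Combined with the hypothesis $\epsilon < \rho$, both arguments of the maximum in \eqref{def:adaptive_rho_armijo} are strictly less than $\rho$, so $\hat{\rho}(v(\alpha_k)) < \rho$ on every iteration the while loop actually executes.

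I would then denote the trial step sizes by $\beta^{(i)} \coloneqq \rho^i \alpha_0$ for Algorithm~\ref{alg:BLS} and, for Algorithm~\ref{alg:ABLS}, by $\alpha^{(0)} \coloneqq \alpha_0$ and $\alpha^{(i+1)} \coloneqq \hat{\rho}(v(\alpha^{(i)}))\cdot \alpha^{(i)}$. A short induction using the previous bound yields $\alpha^{(i)} \leq \beta^{(i)}$ for every $i$ up to and including the first iteration at which Algorithm~\ref{alg:ABLS} terminates. To finish, let $N$ be the number of while-loop body executions of Algorithm~\ref{alg:BLS}, so Algorithm~\ref{alg:BLS} makes $N+1$ function evaluations and $\beta^{(N)}$ satisfies \eqref{ineq:Armijo}. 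If Algorithm~\ref{alg:ABLS} terminates at some $j \leq N$, it makes at most $N+1$ evaluations and we are done. Otherwise $\alpha^{(N)}$ is well-defined with $0 < \alpha^{(N)} \leq \beta^{(N)}$ (positivity follows from $\hat{\rho}\geq \epsilon > 0$), and \cref{prop:app-convex-step-size-feasibility-monotone} implies $\alpha^{(N)}$ also satisfies \eqref{ineq:Armijo}. Hence Algorithm~\ref{alg:ABLS} terminates no later than iteration $N$, again making at most $N+1$ function evaluations.

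The main obstacle is small and purely a matter of bookkeeping: ensuring the strict inequality $\hat{\rho}(v) < \rho$ is in force at every relevant iteration, and handling the edge case $N = 0$ (where both algorithms trivially make one function evaluation and $\alpha^{(N)} = \beta^{(N)}$). The substantive work has already been done by \cref{prop:app-convex-step-size-feasibility-monotone}, which lets the pointwise domination of step sizes translate directly into a domination of termination times.
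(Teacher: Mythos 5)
Your proposal is correct and follows essentially the same route as the paper: establish $\hat{\rho}(v(\alpha_k)) < \rho$ whenever the Armijo condition is violated, then compare the adaptive trial sequence term-by-term against the geometric sequence $\rho^i\alpha_0$ and invoke the monotonicity of feasibility (\cref{prop:app-convex-step-size-feasibility-monotone}) to conclude the adaptive routine terminates no later. The paper compresses the second half into a reference to the argument of \cref{prop:app-convex-criteria-evals-monotone}, whereas you spell it out explicitly, but the substance is identical.
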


\begin{proof}
    Suppose \cref{alg:BLS,alg:ABLS} receive compatible inputs.
    If \eqref{ineq:Armijo} is violated for some tentative step-size \(\alpha_{k}\), then \(v(\alpha_{k})<1\) which together with \(c\in\mathopen{(}-0,1\mathopen{)}\) imply \(1-c\cdot v(\alpha_{k}) > 1-c > 0\).
    In turn, \(\hat{\rho}(v(\alpha_{k})) < \rho\) because \({\epsilon < \rho}\), by assumption.
    The result follows by repeating the arguments used to prove \cref{prop:app-convex-criteria-evals-monotone} above.
\end{proof}

\subsection{Nonconvex problems}
\label{subsec:app-theory-nonconvex}

\subsubsection{Armijo condition}

\begin{proposition}[Armijo feasibility for \(C^{2}\) functions]\label{prop:armijo-ss-ub-c2}
    Let \(F\) be twice continuously differentiable.
    Given a base point \(x_{k}\), a descent direction \(d_{k}\), an initial step-size \(\alpha_0\) and a constant \(c\in \mathopen{(}0,1\mathclose{)}\) for the Armijo condition \eqref{ineq:Armijo}, there is some \(\bar{\alpha}=\bar{\alpha}(x_{k}, d_{k}, c)\leq \alpha_0\) such that \(x_{k}+\alpha_{k}d_{k}\) satisfies \eqref{ineq:Armijo} for all \(\alpha_{k}\in \mathopen{(}0,\bar{\alpha}\mathclose{)}\).
\end{proposition}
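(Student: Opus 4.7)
The plan is to use Taylor's theorem with remainder, leveraging the $C^{2}$ assumption to obtain a uniform quadratic upper bound on $F(x_k + \alpha d_k) - F(x_k)$, and then solve explicitly for the range of $\alpha$ that makes the Armijo inequality \eqref{ineq:Armijo} hold.

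First, I would fix the compact ball $B \coloneqq \{x : \|x - x_k\| \leq \alpha_0 \|d_k\|\}$. Since $F \in C^{2}$, the operator norm $\|\nabla^{2} F(\cdot)\|$ is continuous, hence attains a finite maximum $M \coloneqq \max_{x \in B} \|\nabla^{2} F(x)\|$ on $B$. For any $\alpha \in (0, \alpha_0]$, the segment from $x_k$ to $x_k + \alpha d_k$ lies in $B$, so Taylor's theorem with integral (or mean-value) remainder gives
\begin{equation*}
F(x_k + \alpha d_k) - F(x_k) \leq \alpha \langle \nabla F(x_k), d_k\rangle + \tfrac{M}{2}\alpha^{2}\|d_k\|^{2}.
\end{equation*}

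Next, I would substitute this bound into the Armijo condition and rearrange. A sufficient condition for \eqref{ineq:Armijo} to hold is
\begin{equation*}
\alpha\langle \nabla F(x_k), d_k\rangle + \tfrac{M}{2}\alpha^{2}\|d_k\|^{2} \leq c\,\alpha\langle \nabla F(x_k), d_k\rangle,
\end{equation*}
which, after dividing by $\alpha > 0$ and using that $\langle \nabla F(x_k), d_k\rangle < 0$ (descent direction), is equivalent to
\begin{equation*}
\alpha \leq \frac{2(1-c)\bigl(-\langle \nabla F(x_k), d_k\rangle\bigr)}{M\|d_k\|^{2}}.
\end{equation*}
The right-hand side is strictly positive because $c \in (0,1)$, $d_k \neq 0$ (else it would not be a descent direction), and $-\langle \nabla F(x_k), d_k\rangle > 0$.

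Finally, I would define
\begin{equation*}
\bar{\alpha} \coloneqq \min\!\left\{\alpha_0,\; \frac{2(1-c)\bigl(-\langle \nabla F(x_k), d_k\rangle\bigr)}{M\|d_k\|^{2}}\right\},
\end{equation*}
which depends only on $x_k$, $d_k$, $c$ (and the fixed $\alpha_0$), and conclude that every $\alpha_k \in (0,\bar{\alpha})$ satisfies \eqref{ineq:Armijo}. There is no real obstacle here; the only subtle point is ensuring the Hessian bound $M$ is uniform over the entire range of candidate step sizes, which is handled once and for all by fixing the compact ball $B$ before invoking Taylor's theorem. If one wishes to avoid the dependence on $\alpha_0$ in $M$, one can instead take any compact neighborhood of $x_k$ and further shrink $\bar{\alpha}$ to keep the segment inside it; the resulting threshold still has the claimed form.
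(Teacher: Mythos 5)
Your proof is correct and follows essentially the same route as the paper's: a second-order Taylor expansion, a uniform bound on the Hessian quadratic form over a compact set containing the segment $\{x_k+\alpha d_k:\alpha\in[0,\alpha_0]\}$, and then solving the resulting affine-in-$\alpha$ inequality to obtain the threshold $2(1-c)\langle\nabla F(x_k),-d_k\rangle/(M\|d_k\|^2)$. Your explicit $\min$ with $\alpha_0$ is a small extra care that the paper's proof omits, but the argument is otherwise identical in substance.
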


\begin{proof}
    Assuming \(F\) twice continuously differentiable, then by Taylor's theorem \cite[p.~14]{Nocedal2006}, there exists some \(t = t(x_{k}, d_{k}, \alpha_{k}) \in \mathopen{(} 0, 1 \mathclose{)}\) such that 
    \begin{talign}
        F(x_{k} + \alpha_{k}d_{k}) = F(x_{k}) + \alpha_{k}\langle \nabla F(x_{k}), d_{k} \rangle + \alpha_{k}^{2}\dfrac{1}{2}\langle d_{k}, \nabla^{2}F(x_{k} + t\alpha_{k}d_{k})d_{k} \rangle.
        \label{eq:taylor-expansion}
    \end{talign}
    Moreover, the eigenvalues of \(\nabla^{2}F\) are continuous and the line segment ~\({\{ x_{k}+\alpha_{k} d_{k}: \alpha_{k} \in \mathopen{[}0,\alpha_{0}\mathclose{]} \}}\) is compact, therefore there is some \(\lambda > 0\) such that for all \(\alpha_{k} \in \mathopen{(}0,\alpha_0\mathclose{)}\) and \(t \in \mathopen{(}0,1 \mathclose{)}\)
    \begin{align}
        \left\vert d_{k}^{\top}\nabla^{2}F(x_{k} + t\alpha_{k} d_{k})d_{k} \right\vert
        \leq \lambda \Vert d_{k} \Vert^{2}.
        \label{ineq:hessian-top-eig-ub}
    \end{align}
    So, let \mbox{\(\bar{\alpha} = \bar{\alpha}(x_{k}, d_{k}, c) \coloneqq  2(1-c)\langle \nabla F(x_{k}), - d_{k} \rangle / (\lambda \Vert d_{k} \Vert ^{2}) > 0\)}, which is positive since \(d_{k}\) is a descent direction, by assumption.
    Combining \eqref{eq:taylor-expansion} with \eqref{ineq:hessian-top-eig-ub}, it follows that if \(\alpha_{k}\in \mathopen{(}0,\bar{\alpha}\mathclose{)}\), then \eqref{ineq:Armijo} holds.
\end{proof}

For the sake of convenience, we now restate some definition from \cref{sec:theory}.

\begin{definition}[Smoothness]
    A function \(F\) is said to be \emph{Lipschitz-smooth} if \eqref{ineq:descent_lemma} holds for some \(L>0\).
\end{definition}

\begin{definition}[Gradient related]
    The directions \(d_{k}\) are said to be \emph{gradient related} if there are \(c_{1}>0\) and \(c_{2}>0\) such that \({\langle \nabla F(x_{k}), -d_{k} \rangle \geq c_{1} \Vert \nabla F(x_{k}) \Vert^{2}}\) and \({\Vert d_{k} \Vert \leq c_{2}\Vert \nabla F(x_{k}) \Vert}\), for all \(k\geq 0\).
\end{definition}

Given a Lipschitz-smooth function \(F\), we are particularly interested in applying the Descent Lemma \eqref{ineq:descent_lemma} with \(x=x_{k}\) and \(y=x_{k}+\alpha_{k}d_{k}\), which gives
\begin{align}
    F(x_{k} + \alpha_{k}d_{k})
    \leq F(x_{k}) + \alpha_{k}\langle \nabla F(x_{k}), d_{k} \rangle + \alpha_{k}^{2}\frac{L}{2}\Vert d_{k} \Vert^{2}.
    \label{ineq:descent_lemma_iteration}
\end{align}

\begin{proposition}[Armijo feasibility for Lipschitz-smooth functions]\label{prop:armijo-ss-ub-lipschitz}
    Let \(F\) be Lipschitz-smooth.
    Given a base point \(x_{k}\), a descent direction \(d_{k}\), an initial step-size \(\alpha_0\) and a constant \(c\in \mathopen{(}0,1\mathclose{)}\) for the Armijo condition \eqref{ineq:Armijo}, there is some \(\bar{\alpha}=\bar{\alpha}(x_{k}, d_{k}, c)\leq \alpha_0\) such that \eqref{ineq:Armijo} holds for all \(\alpha_{k}\in \mathopen{(}0,\bar{\alpha}\mathclose{)}\).
    If, in addition, \(d_{k}\) are gradient related, then \eqref{ineq:Armijo} holds for all \(\alpha_{k}\in \mathopen{(}0,\frac{2(1-c)c_{1}}{Lc_{2}^{2}}\mathclose{]}\), independent of \(x_{k}\) and \(d_{k}\).
\end{proposition}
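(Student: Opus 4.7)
The plan is to apply the Descent Lemma \eqref{ineq:descent_lemma_iteration} directly to bound $F(x_k + \alpha_k d_k)$ from above, and then find the range of $\alpha_k$ for which this upper bound is itself dominated by the Armijo right-hand side. Specifically, subtracting $F(x_k)$ from both sides of \eqref{ineq:descent_lemma_iteration} and comparing to the Armijo condition \eqref{ineq:Armijo}, it suffices to require
\begin{align*}
\alpha_{k}\langle \nabla F(x_{k}), d_{k}\rangle + \alpha_{k}^{2}\frac{L}{2}\Vert d_{k}\Vert^{2}
\;\leq\; c\,\alpha_{k}\langle \nabla F(x_{k}), d_{k}\rangle.
\end{align*}
Since $d_{k}$ is a descent direction, $\langle \nabla F(x_k), d_k\rangle < 0$, so dividing by $\alpha_k > 0$ and rearranging gives the explicit threshold
\begin{align*}
\alpha_{k} \;\leq\; \frac{2(1-c)\,\langle \nabla F(x_{k}),-d_{k}\rangle}{L\,\Vert d_{k}\Vert^{2}}.
\end{align*}

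For the first claim, I would then set $\bar{\alpha}(x_{k},d_{k},c) \coloneqq \min\bigl\{\alpha_{0},\, 2(1-c)\langle \nabla F(x_{k}),-d_{k}\rangle/(L\Vert d_{k}\Vert^{2})\bigr\}$, which is strictly positive because $d_k$ is a descent direction (so in particular $\|d_k\|>0$), and observe that \eqref{ineq:Armijo} holds for every $\alpha_{k}\in(0,\bar{\alpha})$ by the chain of inequalities above.

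For the second claim, I would substitute the gradient-related bounds into the threshold. The numerator satisfies $\langle \nabla F(x_{k}),-d_{k}\rangle \geq c_{1}\Vert \nabla F(x_{k})\Vert^{2}$, while the denominator satisfies $\Vert d_{k}\Vert^{2}\leq c_{2}^{2}\Vert \nabla F(x_{k})\Vert^{2}$. Dividing, the factor $\Vert\nabla F(x_k)\Vert^2$ cancels and yields the $x_k,d_k$-independent bound
\begin{align*}
\frac{2(1-c)\,\langle \nabla F(x_{k}),-d_{k}\rangle}{L\,\Vert d_{k}\Vert^{2}}
\;\geq\; \frac{2(1-c)c_{1}}{L c_{2}^{2}},
\end{align*}
so \eqref{ineq:Armijo} is satisfied for every $\alpha_k$ in the claimed interval, which is exactly $\bar\alpha$ as defined in \eqref{eq:alphabar}.

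There is no real obstacle here; the proof is a direct chain of the Descent Lemma and elementary algebra. The one subtlety worth flagging is that the cancellation of $\Vert \nabla F(x_k)\Vert^{2}$ in the second part is only meaningful when $\Vert \nabla F(x_{k})\Vert > 0$, but this is automatic since $d_{k}$ being a descent direction forces $\langle \nabla F(x_{k}), d_{k}\rangle \neq 0$; otherwise $x_k$ is stationary and the Armijo condition becomes vacuous. I would also note in passing that taking the minimum with $\alpha_{0}$ is what enforces $\bar\alpha\leq\alpha_{0}$, as required by the statement.
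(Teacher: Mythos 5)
Your proof is correct and follows essentially the same route as the paper's: apply the descent lemma \eqref{ineq:descent_lemma_iteration}, compare its right-hand side with the Armijo right-hand side to obtain the threshold $2(1-c)\langle \nabla F(x_{k}),-d_{k}\rangle/(L\Vert d_{k}\Vert^{2})$, and then substitute the gradient-relatedness bounds to get the uniform threshold $2(1-c)c_{1}/(Lc_{2}^{2})$. Your added remarks (taking the minimum with $\alpha_{0}$ and noting $\Vert\nabla F(x_{k})\Vert>0$) are sensible housekeeping the paper leaves implicit.
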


\begin{proof}
    To guarantee \eqref{ineq:Armijo} holds, we impose that the right-hand side of \eqref{ineq:descent_lemma_iteration} is less than the right-hand side of \eqref{ineq:Armijo}:
    \begin{align*}
        F(x_{k}) + \alpha_{k}\langle \nabla F(x_{k}), d_{k} \rangle + \alpha_{k}^{2}\frac{L}{2}\Vert d_{k} \Vert^{2}
        \leq F(x_{k}) + c\alpha_{k}\langle \nabla F(x_{k}), d_{k} \rangle.
    \end{align*}
    In turn, simplifying the above inequality, it follows that if
    \begin{align}
        \alpha_{k} \leq \frac{2(1-c)\langle \nabla F(x_{k}), -d_{k} \rangle}{L\Vert d_{k} \Vert^{2}},
        \label{ineq:feas-armijo-ss-ub-Lipschitz}
    \end{align}
    then \eqref{ineq:Armijo} holds, where we note that \eqref{ineq:feas-armijo-ss-ub-Lipschitz} is positive, since \(d_{k}\) is assumed a descent direction.

    Now, suppose that \(\langle \nabla F(x_{k}), -d_{k} \rangle \geq c_{1} \Vert \nabla F(x_{k}) \Vert^{2}\) and \(\Vert d_{k} \Vert \leq c_{2}\Vert \nabla F(x_{k}) \Vert\) for some \(c_{1}>0\) and \(c_{2}>0\).
    Then, for all \(\alpha_{k}\) such that \(\alpha_{k} \leq 2(1-c)c_{1}/Lc_{2}^{2}\), we have that
    \begin{align*}
        \alpha_{k} 
        \leq \frac{2(1-c)}{L}\frac{c_{1}\Vert \nabla F(x_{k}) \Vert^{2}}{c_{2}^{2}\Vert \nabla F(x_{k}) \Vert^{2}}
        \leq \frac{2(1-c)\langle \nabla F(x_{k}), -d_{k} \rangle}{L\Vert d_{k} \Vert^{2}}.
    \end{align*}
    That is, \eqref{ineq:feas-armijo-ss-ub-Lipschitz} holds.
    Therefore, \eqref{ineq:Armijo} also holds.
\end{proof}

\begin{proposition}
    Let \(F\) be Lipschitz-smooth, \(\epsilon < \rho\) and assume \(v,\hat{\rho}\) are given by \eqref{def:armijo-params}.
    Also, suppose \(d_{k}\) are gradient related.
    If \cref{alg:BLS,alg:ABLS} receive \(\alpha_{k}\) as the initial step-size input at iteration \(k+1\) for all \(k\geq 0\), then they evaluate \eqref{ineq:Armijo} at most \(\floor{\log_{\rho}(\bar{\alpha}/\alpha_{0})} + 1 + k\) times up to iteration \(k\), where \({\bar{\alpha}\coloneqq  2(1-c)c_{1}/Lc_{2}^{2}}\).
    If, on the other hand, \cref{alg:BLS,alg:ABLS} receive \(\alpha_{0}\) as the initial step-size input at every iteration, then they evaluate \eqref{ineq:Armijo} at most \(k(\floor{\log_{\rho}(\bar{\alpha}/\alpha_{0})} + 1)\) times up to iteration \(k\).
\end{proposition}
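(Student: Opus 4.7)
The plan is to reduce both \cref{alg:BLS,alg:ABLS} to a single geometric-shrinkage argument by leveraging \cref{prop:armijo-ss-ub-lipschitz}: under Lipschitz-smoothness and gradient-relatedness, the Armijo condition holds for every tentative step in $(0,\bar\alpha)$, with $\bar\alpha = 2(1-c)c_1/(Lc_2^2)$. The question then reduces to bounding how many shrinkages by a factor at most $\rho$ suffice to drive the tentative step below $\bar\alpha$. For \cref{alg:BLS}, the $N$-th tentative step is exactly $\rho^N \alpha_{\mathrm{init}}$, so the loop terminates once $\rho^N \alpha_{\mathrm{init}} < \bar\alpha$, i.e., after at most $\floor{\log_\rho(\bar\alpha/\alpha_{\mathrm{init}})} + 1$ adjustments (and zero if $\alpha_{\mathrm{init}} \leq \bar\alpha$ already).

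I would next show that the same per-adjustment bound governs \cref{alg:ABLS}. The key lemma is that whenever \eqref{ineq:Armijo} is violated, $\hat\rho(v(\alpha_k)) < \rho$: \cref{ass:descent-direction} together with the violation give $v(\alpha_k) < 1$, so $1 - c\cdot v(\alpha_k) > 1 - c > 0$ and hence $\rho(1-c)/(1 - c\cdot v(\alpha_k)) < \rho$; the hypothesis $\epsilon < \rho$ then yields $\hat\rho(v(\alpha_k)) = \max\{\epsilon,\, \rho(1-c)/(1-c\cdot v(\alpha_k))\} < \rho$. By induction on the shrinkages within a single iteration, after $N$ adaptive adjustments the tentative step is at most $\rho^N \alpha_{\mathrm{init}}$, so \cref{alg:ABLS} terminates in no more adjustments than \cref{alg:BLS} would with the same $\alpha_{\mathrm{init}}$.

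Aggregation over iterations finishes both cases. In the reset case, the uniform per-iteration bound applies with $\alpha_{\mathrm{init}} = \alpha_0$, and summing over $k$ iterations yields $k(\floor{\log_\rho(\bar\alpha/\alpha_0)} + 1)$. In the warm-start case a per-iteration bound is inadequate because the tentative step drifts across iterations, so I would instead track the cumulative shrinkage applied to $\alpha_0$: writing $n^\star := \floor{\log_\rho(\bar\alpha/\alpha_0)} + 1$ and using that every shrinkage factor is at most $\rho$, once $n^\star$ total adjustments have occurred the current step is at most $\rho^{n^\star}\alpha_0 < \bar\alpha$; \cref{prop:armijo-ss-ub-lipschitz} then forces every subsequent iteration to satisfy Armijo immediately, triggering no further adjustments. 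Thus the cumulative adjustments are bounded by $n^\star$, and adding one terminating successful check per iteration produces $\floor{\log_\rho(\bar\alpha/\alpha_0)} + 1 + k$.

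The principal obstacle is the warm-start case for \cref{alg:ABLS}: because adaptive factors can be strictly smaller than $\rho$, the adaptive trajectory of tentative step sizes diverges from the regular one after the first shrinkage, and the simple per-iteration argument used for the reset case does not transfer. The workaround above bypasses per-iteration bounds altogether and instead exploits that the running \emph{product} of shrinkage factors is dominated by $\rho$ raised to the number of factors, so the cumulative-adjustment bound transfers verbatim from \cref{alg:BLS} to \cref{alg:ABLS}. Notably, a lower bound on the final step size (the analogue of $\alpha_k^\star \geq \rho\bar\alpha$ available for \cref{alg:BLS}) fails for \cref{alg:ABLS} when $\epsilon$ is very small, but it is not required for the evaluation count.
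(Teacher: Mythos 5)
Your proof is correct and follows essentially the same route as the paper: show that a violated Armijo condition forces \(\hat\rho(v(\alpha_k))<\rho\), invoke the uniform feasibility threshold \(\bar\alpha = 2(1-c)c_1/(Lc_2^2)\) from the Lipschitz-smooth feasibility proposition, and count at most \(\floor{\log_\rho(\bar\alpha/\alpha_0)}+1\) adjustments cumulatively in the warm-start case and per iteration in the reset case. One small aside: your closing remark that the \(\rho\bar\alpha\) step-size lower bound fails for \cref{alg:ABLS} when \(\epsilon\) is small is not right --- the paper's separate lower-bound proposition shows \(\hat\rho(v(\alpha_k))\alpha_k \geq \rho(1-c)\alpha_k/(1-c\, v(\alpha_k)) \geq \rho\bar\alpha\) independently of \(\epsilon\) --- but this does not affect your evaluation-count argument.
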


\begin{proof}
    Suppose that \cref{alg:ABLS} evaluates \eqref{ineq:Armijo} and it does not hold for a given tentative step-size \(\alpha_{k}\).
    Then,
    \begin{align*}
        F(x_{k} + \alpha_{k}d_{k}) - F(x_{k})
        > c\alpha_{k}\langle \nabla F(x_{k}), d_{k} \rangle.
    \end{align*}
    Dividing both sides above by \(c \alpha_{k} \langle \nabla F(x_{k}), d_{k} \rangle < 0\) gives \(v(\alpha_{k}) < 1\).
    In turn, since \(c \in \mathopen{(}0,1\mathclose{)}\), it follows that \({1-c > 1-cv(\alpha_{k})>0}\) and \({(1-c)/(1-c\cdot v(\alpha_{k})) < 1}\).
    Plugging this inequality into \eqref{def:adaptive_rho_armijo}, we obtain
    \begin{talign*}
        \hat{\rho}(\alpha_{k})
        = \max(\epsilon, \rho (1 - c)/(1 - c\cdot v(\alpha_{k}))
        < \rho,
    \end{talign*}
    since by assumption \(\epsilon < \rho\).
    Therefore, if \eqref{ineq:Armijo} does not hold for a given tentative step-size, then \cref{alg:BLS,alg:ABLS} multiply it by a factor of at most \(\rho\) to adjust it.
    
    Moreover, by \cref{prop:armijo-ss-ub-lipschitz}, \eqref{ineq:Armijo} is satisfied for all \(\alpha_{k}\in \mathopen{(}0,\bar{\alpha}\mathopen{)}\), independently of \(x_{k}\) and \(d_{k}\).
    
    Hence, if \cref{alg:BLS,alg:ABLS} use \(\alpha_{0}\) as the initial step-size for the first iteration and \(\alpha_{k}\) at iteration \(k+1\) for \(k \geq 0\), then at most \(\floor{\log_{\rho}(\bar{\alpha}/\alpha_{0})} + 1\) adjustments are necessary until a step-size that is uniformly feasible is found.
    Each adjustment entails evaluating \eqref{ineq:Armijo} once.
    In addition, \eqref{ineq:Armijo} must be evaluated once every iteration.
    Therefore, \eqref{ineq:Armijo} is evaluated at most \(\floor{\log_{\rho}(\bar{\alpha}/\alpha_{0})} + 1 + k\) times up to iteration \(k\).

    Now, suppose \cref{alg:BLS,alg:ABLS} use \(\alpha_{0}\) as the initial step-size in every iteration.
    Then, at most \(\floor{\log_{\rho}(\bar{\alpha}/\alpha_{0})} + 1\) adjustments are necessary in every iteration until a feasible step-size is found.
    As before, each adjustment entails evaluating \eqref{ineq:Armijo} once, in addition to the first evaluation.
    Therefore, \eqref{ineq:Armijo} is evaluated at most \(k(\floor{\log_{\rho}(\bar{\alpha}/\alpha_{0})} + 1)\) times up to iteration \(k\).
\end{proof}


\begin{proposition}[step-size lower bounds]\label{prop:app-armijo-ss-lb}
    Let \(F\) be Lipschitz-smooth.
    Also, suppose \(d_{k}\) are gradient related.
    Given appropriate inputs, \cref{alg:ABLS} with the choices specified by~\eqref{def:armijo-params} and \cref{alg:BLS} return a step-size \(\alpha_{k}\) such that
    \begin{align*}
        \alpha_{k} 
        \geq \min \left\{ \alpha_0, \rho\dfrac{2(1-c)c_{1}}{Lc_{2}^{2}} \right\} 
        > 0.
    \end{align*}
\end{proposition}

\begin{proof}
    Since \(d_{k}\) is a descent direction, dividing both sides of \eqref{ineq:descent_lemma_iteration} by \(\langle \nabla F(x_{k}), -d_{k} \rangle > 0\) yields
    \begin{align*}
        - c v(\alpha_{k}) 
        = - \dfrac{F(x_{k}+\step d_{k}) - F(x_{k})}{ \alpha_{k} \langle \nabla F(x_{k}),d_{k}\rangle}
        \leq -1 - \dfrac{\alpha_{k} L\Vert d_{k} \Vert^{2}}{2\langle \nabla F(x_{k}), d_{k} \rangle}.
    \end{align*}
    Hence, if \(\hat{\rho}\) is chosen as \eqref{def:adaptive_rho_armijo} and \eqref{ineq:Armijo} does not hold, then step-sizes \(\alpha_{k}\) returned by \cref{alg:ABLS} satisfy
    \begin{align*}
        \hat{\rho}(v(\alpha_{k}))\alpha_{k}
        \geq \rho \dfrac{1-c}{1-cv(\alpha_{k})} \alpha_{k}
        \geq \rho \dfrac{2(1-c)\langle \nabla F(x_{k}), -d_{k} \rangle}{L\Vert d_{k} \Vert^{2}}
        \geq \rho\frac{2(1-c)c_{1}}{Lc_{2}^{2}}
        > 0.
    \end{align*}

    Moreover, by \cref{prop:armijo-ss-ub-lipschitz}, the greatest step-size for which \eqref{ineq:Armijo} is guaranteed to hold is \(2(1-c)c_{1}/Lc_{2}^{2}\).
    If \(\alpha_{0}\geq 2(1-c)c_{1}/Lc_{2}^{2}\), then \cref{alg:BLS} returns a step-size at least within a \(\rho\) factor of \(2(1-c)c_{1}/Lc_{2}^{2}\).
\end{proof}

\subsubsection{Descent Lemma}

First, note the proximal operator \(p_{\alpha_{k}}\) given by \eqref{def:prox_op_FISTA} is well-defined.
Indeed, given a continuous convex function \(g\), a point \(y_{k}\) and some \(\alpha_{k} > 0\), the map \(x\mapsto g(x) + (1/2\alpha_{k}) \left\lVert x - (y - \alpha_{k}\nabla f(y) ) \right\rVert^{2}\) is continuous strongly convex and therefore admits a unique minimum.

\begin{proposition}[Lipschitz step-size feasibility]\label{prop:lipschitz_qub_feasibility}
    Let \(f\) be Lipschitz-smooth convex and let \(g\) be continuous convex.
    Also, suppose \(v\) and \(\hat{{\rho}}\) are chosen as \eqref{def:violation_lipschitz} and \eqref{def:adaptive_rho_FISTA}.
    If \(\alpha_{k} \in \mathopen{(}0,1/L\mathclose{)}\), then \eqref{ineq:lipschitz_FISTA} holds for all \(y_{k}\).
    If \cref{alg:BLS,alg:ABLS} receive \(\alpha_{k}\) as the initial step-size input in iteration \(k+1\), then they evaluate \eqref{ineq:lipschitz_FISTA} at most \(\floor{\log_{\rho}(1/L\alpha_0)} + 1 + k\) times up to iteration \(k\).
    If, on the other hand, \cref{alg:BLS,alg:ABLS} receive \(\alpha_{0}\) as the initial step-size input in every iteration, then they evaluate \eqref{ineq:lipschitz_FISTA} at most \(k(\floor{\log_{\rho}(1/L\alpha_0)} + 1)\) times up to iteration \(k\).
\end{proposition}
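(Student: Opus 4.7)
The plan is to prove the three claims of the proposition in sequence: feasibility of any $\alpha_k \leq 1/L$, the monotone-scheme evaluation bound, and the restart-scheme evaluation bound. For feasibility, I would apply the descent lemma \eqref{ineq:descent_lemma} to $f$ with $x = y_k$ and $y = p_{\alpha_k}(y_k)$ to obtain
\[
  f(p_{\alpha_k}(y_k)) \leq f(y_k) + \langle \nabla f(y_k),\, p_{\alpha_k}(y_k) - y_k \rangle + \tfrac{L}{2}\|p_{\alpha_k}(y_k) - y_k\|^2.
\]
Since $\alpha_k \leq 1/L$ implies $L/2 \leq 1/(2\alpha_k)$, replacing $L/2$ by $1/(2\alpha_k)$ preserves the inequality, and adding $\psi(p_{\alpha_k}(y_k))$ to both sides while using $F = f + \psi$ recovers \eqref{ineq:lipschitz_FISTA}.

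Next I would establish that infeasibility forces a strict contraction. Cross-multiplying in \eqref{def:violation_lipschitz} (whose denominator is nonnegative by convexity of $f$, and strictly positive whenever $p_{\alpha_k}(y_k) \neq y_k$), the inequality $v(\alpha_k) \geq 1$ is exactly equivalent to \eqref{ineq:lipschitz_FISTA} after the $\psi$-terms on both sides cancel. Hence violation of \eqref{ineq:lipschitz_FISTA} implies $v(\alpha_k) < 1$, so $\hat{\rho}(v(\alpha_k)) = \rho\,v(\alpha_k) < \rho$, and each failed trial step is multiplied by a factor of at most $\rho$ in both \cref{alg:BLS} and \cref{alg:ABLS}. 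Combining this contraction with the first claim: starting from $\alpha_0$, the tentative step size falls at or below $1/L$ (and hence becomes feasible) after at most $N_0 \coloneqq \lfloor \log_\rho(1/(L\alpha_0))\rfloor + 1$ shrinkages in any single iteration. In the monotone scheme the passed-through step size is nonincreasing across iterations, so the cumulative number of shrinkages up to iteration $k$ is still bounded by $N_0$; adding one successful feasibility check per iteration yields the bound $N_0 + k$. In the restart scheme each iteration independently incurs at most $N_0$ evaluations (already including its terminating successful check), yielding the product bound $k N_0$.

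The main obstacle is the contraction step: carefully unpacking \eqref{def:violation_lipschitz} to verify that $v(\alpha_k) < 1$ is genuinely equivalent to violation of \eqref{ineq:lipschitz_FISTA}, and handling the degenerate case $p_{\alpha_k}(y_k) = y_k$ (where both sides of \eqref{ineq:lipschitz_FISTA} trivially coincide, so no shrinkage occurs and no division by zero takes place). Once this equivalence is in hand the rest is routine geometric bookkeeping, directly analogous to the Armijo case treated in the previous subsection.
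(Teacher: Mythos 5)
Your proposal is correct and follows essentially the same route as the paper's proof: feasibility of any $\alpha_k \le 1/L$ via the descent lemma with $L/2 \le 1/(2\alpha_k)$, then showing that a violated criterion forces $v(\alpha_k) < 1$ and hence $\hat\rho(v(\alpha_k)) = \rho\,v(\alpha_k) < \rho$, so both algorithms contract by at least $\rho$ per failed check, giving the stated counts. Your explicit handling of the degenerate case $p_{\alpha_k}(y_k) = y_k$ is a small point of extra care the paper treats only implicitly (there the failure of \eqref{ineq:lipschitz_FISTA} is used to conclude the denominator is positive), but the substance is identical.
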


\begin{proof}
    Given any \(y_{k}\), if \(\alpha_{k} \in \mathopen{(}0,1/L\mathclose{)}\), then applying \eqref{ineq:descent_lemma} with \(x=p_{\alpha_{k}}(y_{k})\) and \(y=y_{k}\), we get
    \begin{align*}
        f(p_{\alpha_{k}}(y_{k})) 
        &\leq f(y_{k}) + \langle \nabla f(y_{k}), p_{\alpha_{k}}(y_{k}) - y_{k} \rangle + (L/2)\Vert p_{\alpha_{k}}(y_{k}) - y_{k} \Vert^{2}
        \\
        &\leq f(y_{k}) + \langle \nabla f(y_{k}), p_{\alpha_{k}}(y_{k}) - y_{k} \rangle + (1/2\alpha_{k})\Vert p_{\alpha_{k}}(y_{k}) - y_{k} \Vert^{2}.
    \end{align*}
    Adding \(\psi(p_{\alpha_{k}}(y_{k}))\) to both sides, we recover \eqref{ineq:lipschitz_FISTA}.
    Thus, if \(\alpha_{k} \in \mathopen{(}0,\bar{\alpha}\mathclose{)}\), then \eqref{ineq:lipschitz_FISTA} holds for all \(y_{k}\).

    Given an initial step-size \(\alpha_{k}\) and the points \(y_{k}\) and \(p_{\alpha_{k}}(y_{k})\), \cref{alg:BLS} checks if \eqref{ineq:lipschitz_FISTA} holds.
    If it does hold, then \cref{alg:BLS} returns \(\alpha_{k}\), otherwise \cref{alg:BLS} adjusts \(\alpha_{k}\) by \(\rho\), recomputes \(p_{\alpha_{k}}(y_{k})\), checks if \eqref{ineq:lipschitz_FISTA} and repeats.
    Since \eqref{ineq:lipschitz_FISTA} is guaranteed to hold for \(\alpha_{k} \in \mathopen{(}0,1/L\mathclose{)}\), given an initial step-size \(\alpha_{0}\), \cref{alg:BLS} computes a feasible step-size after adjusting \(\alpha_{k}\) at most \(\floor{\log_{\rho}(1/L\alpha_0)} + 1\) times.
    Each time \cref{alg:BLS} adjusts \(\alpha_{k}\), \cref{alg:BLS} evaluates \eqref{ineq:lipschitz_FISTA}.
    In addition, \cref{alg:BLS} evaluates \cref{ineq:lipschitz_FISTA} once every time it is called to check if the initial step-size is feasible.
    Hence, if \cref{alg:BLS} receives \(\alpha_{k}\) as the initial step-size input at iteration \(k+1\), then it evaluates \cref{ineq:lipschitz_FISTA} at most \(\floor{\log_{\rho}(1/L\alpha_0)} + 1 + k\) times up to iteration \(k\).
    On the other hand, if \cref{alg:BLS} receives the same \(\alpha_{0}\) as initial step-size input at every iteration, then it might have to adjust \(\alpha_{k}\) up to \(\floor{\log_{\rho}(1/L\alpha_0)} + 1\) in every iteration, therefore \cref{alg:BLS} evaluates \eqref{ineq:lipschitz_FISTA} at most \(k(\floor{\log_{\rho}(1/L\alpha_0)} + 1)\) times up to iteration \(k\).

    Now, consider \cref{alg:ABLS}, with \(v\) and \(\hat{\rho}\) chosen as \eqref{def:violation_lipschitz} and \eqref{def:adaptive_rho_FISTA}.
    Given an initial step-size \(\alpha_{k}\) and the points \(y_{k}\) and \(p_{\alpha_{k}}(y_{k})\), \cref{alg:ABLS} checks if \eqref{ineq:lipschitz_FISTA} holds.
    Suppose \eqref{ineq:lipschitz_FISTA} does not hold.
    Then, moving the terms \(f(p_{\alpha_{k}}(y_{k}))\) and \(\langle \nabla f(y_{k}), p_{\alpha_{k}}(y_{k}) - y_{k} \rangle\) to the left-hand side and cancelling \(\psi(p_{\alpha_{k}}(y_{k}))\) on both sides, we obtain
    \begin{align}
        f(p_{\alpha_{k}}(y_{k})) - f(y_{k}) - \langle \nabla f(y_{k}), p_{\alpha_{k}}(y_{k}) - y_{k} \rangle
        > (1/2\alpha_{k})\Vert p_{\alpha_{k}}(y_{k}) - y_{k} \Vert^{2}.
        \label{ineq:lipschitz_FISTA_fail}
    \end{align}
    Since \(\Vert \cdot \Vert \geq 0\), the left-hand side must be positive.
    So, dividing both sides by the left-hand side and using \eqref{def:violation_lipschitz}, it follows that \( v(\alpha_{k}) < 1\).
    Hence, \cref{alg:ABLS} adjusts \(\alpha_{k}\) to \(\hat{\rho}(\alpha_{k})\alpha_{k} < \rho \alpha_{k}\).
    That is, the factor by which \cref{alg:ABLS} adjusts \(\alpha_{k}\) is smaller than the factor by which \cref{alg:BLS} adjusts \(\alpha_{k}\).
    Therefore, \cref{alg:ABLS} evaluates \cref{ineq:lipschitz_FISTA} at most as many times as \eqref{alg:BLS} does.
\end{proof}

\begin{proposition}
    Let \(f\) be Lipschitz-smooth convex and let \(g\) be continuous convex.
    Also, suppose \(v\) and \(\hat{{\rho}}\) are chosen as \eqref{def:violation_lipschitz} and \eqref{def:adaptive_rho_FISTA}.
    If \cref{alg:BLS,alg:ABLS} receive an initial step-size \(\alpha_{0} > 0\), then they return a feasible step-size \(\alpha_{k}\) such that \(\alpha_{k} \geq \min\left\{ \alpha_{0}, \rho/L \right\}\).
\end{proposition}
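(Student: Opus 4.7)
The plan is to split into two cases based on whether any shrinkage occurs. If the initial \(\alpha_0\) is already feasible, the while loop in either algorithm never executes and \(\alpha_k = \alpha_0 \geq \min\{\alpha_0, \rho/L\}\) trivially. Otherwise, let \(\alpha_k'\) denote the last infeasible tentative step size, so that the final returned step size is \(\alpha_k = r\,\alpha_k'\), where \(r = \rho\) for \cref{alg:BLS} and \(r = \hat{\rho}(v(\alpha_k'))\) for \cref{alg:ABLS}. Infeasibility of \(\alpha_k'\), combined with the contrapositive of the first sentence of \cref{prop:lipschitz_qub_feasibility} (which asserts that every \(\alpha \in (0, 1/L)\) is feasible), forces \(\alpha_k' \geq 1/L\).

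For \cref{alg:BLS} this immediately yields \(\alpha_k = \rho\, \alpha_k' \geq \rho/L\), so \(\alpha_k \geq \min\{\alpha_0, \rho/L\}\) in both subcases.

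For \cref{alg:ABLS} the adaptive factor \(\hat{\rho}(v(\alpha_k')) = \rho\, v(\alpha_k')\) can be strictly smaller than \(\rho\), so the bound \(\alpha_k' \geq 1/L\) alone does not suffice. The key step is to bound \(v(\alpha_k')\) from below by exploiting Lipschitz smoothness of \(f\). Applying \eqref{ineq:descent_lemma} to \(f\) with \(x = y_k\) and \(y = p_{\alpha_k'}(y_k)\) gives
\[
f(p_{\alpha_k'}(y_k)) - f(y_k) - \langle \nabla f(y_k), p_{\alpha_k'}(y_k) - y_k \rangle \;\leq\; \tfrac{L}{2}\|p_{\alpha_k'}(y_k) - y_k\|^2,
\]
and substituting this upper bound on the denominator of \(v(\alpha_k')\) as defined in \eqref{def:violation_lipschitz} yields \(v(\alpha_k') \geq 1/(L\alpha_k')\). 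Therefore
\[
\alpha_k \;=\; \rho\, v(\alpha_k')\, \alpha_k' \;\geq\; \rho / L,
\]
and the stated lower bound follows by combining with the trivial case.

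The main obstacle is the \cref{alg:ABLS} case, because the adaptive shrinkage factor can be arbitrarily close to zero; the argument works only because the smallness of \(v(\alpha_k')\) is exactly compensated by the largeness of \(\alpha_k'\) forced by infeasibility, and the factors of \(\alpha_k'\) cancel cleanly. A minor edge case worth mentioning is \(p_{\alpha_k'}(y_k) = y_k\), which would make the violation ratio indeterminate; but in that situation \eqref{ineq:lipschitz_FISTA} reduces to a trivial identity, so \(\alpha_k'\) cannot have been infeasible and this case never triggers the shrinkage step.
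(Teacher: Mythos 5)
Your proposal is correct and takes essentially the same route as the paper's proof: both reduce the BLS case to the fact (from \cref{prop:lipschitz_qub_feasibility}) that every step size in \(\mathopen{(}0,1/L\mathclose{)}\) is feasible, and both handle \cref{alg:ABLS} by applying \eqref{ineq:descent_lemma} at \(y_{k}\) and \(p_{\alpha_{k}}(y_{k})\) to deduce \(\rho\, v(\alpha_{k})\alpha_{k} \geq \rho/L\) whenever \eqref{ineq:lipschitz_FISTA} fails. Your explicit treatment of the trivial no-backtracking case and of the degenerate \(p_{\alpha_{k}}(y_{k})=y_{k}\) situation is a small but welcome addition the paper leaves implicit.
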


\begin{proof}
    By \cref{prop:lipschitz_qub_feasibility}, every step-size \(\alpha_{k} \in \mathopen{(}0,1/L\mathclose{)}\) is feasible.
    Hence, if \(\alpha_{0}\) is not feasible, then since \cref{alg:BLS} adjusts step-sizes by \(\rho\), it must return a feasible step-size within a \(\rho\) factor of \(1/L\).

    Now, consider \cref{alg:ABLS}, with \(v\) and \(\hat{\rho}\) chosen as \eqref{def:violation_lipschitz} and \eqref{def:adaptive_rho_FISTA}.
    \cref{alg:ABLS} only adjusts \(\alpha_{k}\) when \eqref{ineq:lipschitz_FISTA} does not hold, so suppose that is the case.
    Applying \eqref{ineq:descent_lemma} with \(y=y_{k}\) and \(x_{k}=p_{\alpha_{k}}(y_{k})\) yields
        \begin{align*}
        f(p_{\alpha_{k}}(y_{k})) - f(x_{k})
        - \langle \nabla f(y_{k}), p_{\alpha_{k}}(y_{k}) - y_{k} \rangle \leq (L/2)\Vert p_{\alpha_{k}}(y_{k}) - y_{k} \Vert^{2}.
    \end{align*}
    Dividing both sides by \((L/\rho)(f(p_{\alpha_{k}}(y_{k})) - f(x_{k}) - \langle \nabla f(y_{k}), p_{\alpha_{k}}(y_{k}) - y_{k} \rangle)\), which is positive by \eqref{ineq:lipschitz_FISTA_fail}, we obtain
    \begin{align*}
        \frac{\rho}{L}
        \leq \rho \frac{\frac{1}{2}\Vert p_{\alpha_{k}}(y_{k}) - y_{k} \Vert^{2}}{f(p_{\alpha_{k}}(y_{k})) - f(x_{k}) - \langle \nabla f(y_{k}), p_{\alpha_{k}}(y_{k}) - y_{k} \rangle}
        = \rho v(\alpha_{k}) \alpha_{k},
    \end{align*}
    where the identity follows from \eqref{def:violation_lipschitz}.
    Hence, \cref{alg:ABLS} adjusts \(\alpha_{k}\) to \(\hat{\rho}(v(\alpha_{k}))\alpha_{k} \geq \rho / L\).
\end{proof}

\subsection{Convergence results}
\label{subsec:app-convergence-results}

\subsubsection{A general convergence result for adaptive backtracking}

Under mild conditions, we now show that \(\lim_{k\to +\infty} \Vert \nabla f(x_{k}) \Vert^{2} = 0\) for iterates \(x_{k}\) in the form \eqref{def:canonical-step} with gradient related \(d_{k}\) and step-sizes generated by adaptive backtracking.
We emphasize that the following results make no further assumptions on how the descent directions are generated and that \cite[p. 40]{Nocedal2006}:

{\centering
\emph{For line search methods of the general form \eqref{def:canonical-step}, the limit \(\lim_{k\to +\infty} \Vert \nabla f(x_{k}) \Vert^{2} = 0\) is the strongest global convergence result that can be obtained: We cannot guarantee that the method converges to a minimizer, but only that it is attracted by stationary points. Only by making additional requirements on the search direction \(d_{k}\)---by introducing negative curvature information from the Hessian \(\nabla^{2}f(x_{k})\), for example---can we strengthen these results to include convergence to a local minimum}
}

\begin{proposition}\label{lem:app-Zoutendijk}
    Let \(f\) be bounded below and Lipschitz-smooth on an open set containing the level set \(\{ x: f(x) \leq f(x_{0})\}\), where \(x_{0}\) is the initial point of iterates \eqref{def:canonical-step} where \(d_{k}\) are gradient related and \(\alpha_{k}\) are generated by adaptive backtracking (\cref{alg:ABLS}) with some \(\alpha_{0}>0\) and using \(\hat{\rho}\) and \(v\) given by \eqref{def:armijo-params}.
    Then, \(\lim_{k\to +\infty} \Vert \nabla f(x_{k}) \Vert^{2} = 0\).
\end{proposition}

\begin{proof}
    Under the above assumptions, we have that \(\alpha_{k} \geq \min\{\alpha_{0}, \rho\overline{\alpha}\}\), where \(\overline{\alpha} = 2(1-c)c_{1}/(Lc_{2}^{2})\), by \cref{prop:app-armijo-ss-lb}.
    Moreover, we have that \(\langle \nabla f(x_{k}), d_{k} \rangle \leq -c_{1}\Vert \nabla f(x_{k}) \Vert^{2}\), because \(d_{k}\) are gradient related.
    Hence, since adaptive backtracking enforces the Armijo condition, \eqref{ineq:Armijo}, it follows that
    \begin{align*}
        f(x_{k+1}) - f(x_{k})
        \leq -\alpha_{k}c\Vert \nabla f(x_{k}) \Vert^{2}
        \leq -c\min\{ \alpha_{0}, \rho 2(1-c)c_{1}/(Lc_{2}^{2}) \}\Vert \nabla f(x_{k}) \Vert^{2}.
    \end{align*}
    Telescoping the above difference, we get
    \begin{align*}
        f(x_{k+1}) - f(x_{0})
        = \sum_{t=1}^{k}(f(x_{t+1}) - f(x_{t}))
        \leq -c\min\Bigl\{ \alpha_{0}, \rho \frac{2(1-c)c_{1}}{Lc_{2}^{2}} \Bigr\}\sum_{t=1}^{k}\Vert \nabla f(x_{t}) \Vert^{2}.
    \end{align*}
    Rearranging the above inequality and using the assumption that \(f\) is lower bounded, we obtain
    \begin{align*}
        c\min\Bigl\{ \alpha_{0}, \rho \frac{2(1-c)c_{1}}{Lc_{2}^{2}} \Bigr\}\sum_{t=1}^{k}\Vert \nabla f(x_{t}) \Vert^{2}
        \leq f(x_{0}) - f(x_{k+1})
        < + \infty.
    \end{align*}
    That is, \(\Vert \nabla f(x_{k}) \Vert^{2}\) are square-summable.
    Therefore, it follows that
    \begin{align*}
        \lim_{k\to +\infty} \Vert \nabla f(x_{k}) \Vert^{2}
        = 0.
    \end{align*}
\end{proof}

\subsubsection{Convergence results for gradient descent}

We show that the standard convergence results for gradient descent are preserved if step-sizes are generated by adaptive backtracking.
We address smooth and then smooth strongly convex objectives.

\begin{proposition}
    Let \(f\) be convex, Lipschitz-smooth and suppose \(\nabla f(x^{\ast}) = 0\) for some \(x^{\ast}\).
    If the step-sizes \(\alpha_{k}\) of gradient descent (\cref{alg:GD}) are chosen by adaptive backtracking (\cref{alg:ABLS}) using \(\hat{\rho}\) and \(v\) given by \eqref{def:armijo-params} with \(c\in \mathopen{[}1/2,1\mathopen{)}\) and \(\alpha_{0}>0\), then \(\alpha_{k} \geq \min\{ \alpha_{0}, \rho\overline{\alpha}\}\), where \(\overline{\alpha} = 2(1-c)/L\), and
    \begin{align*}
        f(x_{k}) - f(x^{\ast})
        \leq \frac{\Vert x_{0} - x^{\ast} \Vert^{2}}{2\min\{ \alpha_{0}, \rho\overline{\alpha}\} k}.
    \end{align*}
\end{proposition}

\begin{proof}
    Under the above assumptions, all iterates of gradient descent satisfy the Armijo condition, \eqref{ineq:Armijo}.
    Moreover, since \(f\) is convex, we have that
    \begin{align*}
        f(x_{k})
        \leq f(x^{\ast}) + \langle \nabla f(x_{k}), x_{k} - x^{\ast} \rangle.
    \end{align*}
    Hence, combining the above inequality with \eqref{ineq:Armijo}, it follows that
    \begin{align*}
        f(x_{k+1})
        &\leq f(x_{k}) -c\alpha_{k}\Vert \nabla f(x_{k}) \Vert^{2}
        \leq f(x^{\ast}) + \langle \nabla f(x_{k}), x_{k} - x^{\ast} \rangle
        -c\alpha_{k}\Vert \nabla f(x_{k}) \Vert^{2}.
    \end{align*}
    In turn, since \(c \geq 1/2\), rearranging the above inequality and completing a square, we get
    \begin{align*}
        f(x_{k+1}) - f(x^{\ast})
        &\leq \frac{1}{2\alpha_{k}}(2\alpha_{k}\langle \nabla f(x_{k}), x_{k} - x^{\ast} \rangle - \alpha_{k}^{2}\Vert \nabla f(x_{k}) \Vert^{2})
        \\
        &= \frac{1}{2\alpha_{k}}(2\alpha_{k}\langle \nabla f(x_{k}), x_{k} - x^{\ast} \rangle - \alpha_{k}^{2}\Vert \nabla f(x_{k}) \Vert^{2} \pm \Vert x_{k} -x^{\ast}\Vert^{2})
        \\
        &= \frac{1}{2\alpha_{k}}(\Vert x_{k} - \alpha_{k}\nabla f(x_{k}) -x^{\ast}\Vert^{2} - \Vert x_{k} -x^{\ast}\Vert^{2})
        \\
        &= \frac{1}{2\alpha_{k}}(\Vert x_{k} -x^{\ast}\Vert^{2} - \Vert x_{k+1} -x^{\ast}\Vert^{2}).
    \end{align*}
    Now, since gradient descent sets \(d_{k}=-\nabla f(x_{k})\), then \(d_{k}\) are gradient related with \(c_{1}=c_{2}=1\).
    Moreover, since \(f\) is Lipschitz-smooth, then \(\alpha_{k} \geq \min\{ \alpha_{0}, \rho\overline{\alpha}\}\), where \(\overline{\alpha} = 2(1-c)/L\), by \cref{prop:app-armijo-ss-lb}.
    Plugging this lower bound into the above inequality, it follows that
    \begin{align*}
        f(x_{k+1}) - f(x^{\ast})
        \leq \frac{1}{2\min\{ \alpha_{0}, \rho\overline{\alpha}\}}(\Vert x_{k} -x^{\ast}\Vert^{2} - \Vert x_{k+1} -x^{\ast}\Vert^{2}).
    \end{align*}
    Telescoping the above, we get
    \begin{align*}
        \sum_{t=1}^{k}(f(x_{t+1}) - f(x^{\ast}))
        &\leq \frac{1}{2\min\{ \alpha_{0}, \rho\overline{\alpha}\}}\sum_{t=1}^{k}(\Vert x_{t} -x^{\ast}\Vert^{2} - \Vert x_{t+1} -x^{\ast}\Vert^{2})
        \\
        &\leq \frac{\Vert x_{0} -x^{\ast}\Vert^{2} - \Vert x_{k+1} -x^{\ast}\Vert^{2}}{2\min\{ \alpha_{0}, \rho\overline{\alpha}\}}
        \\
        &\leq \frac{\Vert x_{0} -x^{\ast}\Vert^{2}}{2\min\{ \alpha_{0}, \rho\overline{\alpha}\}}.
    \end{align*}
    Since \(\nabla f(x^{\ast})=0\) and \(f\) is convex, we have that \(f(x_{k+1})-f(x^{\ast})\geq 0\). 
    Moreover, \(f(x_{k})\) are decreasing because the Armijo condition holds in every iteration.
    Therefore 
    \begin{align*}
        f(x_{k+1}) - f(x^{\ast})
        \leq \frac{\Vert x_{0} -x^{\ast}\Vert^{2}}{2\min\{ \alpha_{0}, \rho\overline{\alpha}\}k}.
    \end{align*}
\end{proof}

Next, we show that adaptive backtracking also preserves the convergence rate of gradient descent on strongly convex objectives, which we define below.

\begin{definition}[Strong convexity]
    A continuously differentiable function \(f\) is said to be strongly convex if there exists some \(m>0\) such that for every \(x\) and \(y\)
    \begin{align}
        f(y)
        \geq f(x) + \langle \nabla f(x), y-x \rangle + \frac{m}{2}\Vert y-x \Vert^{2}.
        \label{ineq:app-strong-convexity}
    \end{align}
\end{definition}

\begin{proposition}
    Let \(f\) be Lipschitz-smooth and strongly convex.
    If the step-sizes \(\alpha_{k}\) of gradient descent (\cref{alg:GD}) are chosen by adaptive backtracking (\cref{alg:ABLS}) using \(\hat{\rho}\) and \(v\) given by \eqref{def:armijo-params} with \(c\in \mathopen{[}1/2,1\mathopen{)}\) and \(\alpha_{0}\in\mathopen{(}0,1/m\mathopen{)}\), then
    \begin{align*}
        f(x_{k}) - f(x^{\ast})
        \leq (1-m\min\{ \alpha_{0}, \rho\overline{\alpha}\})^{k}\frac{L+m}{2}\Vert x_{0}-x^{\ast} \Vert^{2}.
    \end{align*}
    In particular, if \(c=1/2\) and \(\alpha_{0}>\rho/L\), then
    \begin{align*}
        f(x_{k}) - f(x^{\ast})
        \leq (1-\rho q)^{k}\frac{L+m}{2}\Vert x_{0}-x^{\ast} \Vert^{2},
    \end{align*}
    where \(q=m/L\) is the reciprocal of the condition number of \(f\).
\end{proposition}

\begin{proof}
    Let \(L\) and \(m\) denote the Lipschitz-smoothness and strong convexity constants of \(f\).
    The assumption that \(f\) is strongly convex implies the existence of a unique global minimizer \(x^{\ast}\) for \(f\).
    We then use \(x^{\ast}\) to define a Lyapunov function \(V\), given by
    \begin{align*}
        V(x_{k})
        = f(x_{k}) - f(x^{\ast})
        + \frac{m}{2}\Vert x_{k}-x^{\ast} \Vert^{2},
    \end{align*}
    which is positive for \(x_{k}\neq x^{\ast}\).
    To prove the result, we show that \((1+\delta_{k})V(x_{k+1})-V(x_{k}) \leq 0\), where
    \begin{align*}
        \delta_{k}
        = \frac{1}{Q_{k}-1},
        &&
        Q_{k}
        = \frac{L_{k}}{m}
        &&
        \text{and}
        &&
        L_{k} = \frac{1}{\alpha_{k}}.
    \end{align*}
    And we note that the assumption that \(\alpha_{k}\leq \alpha_{0} < 1/m\) implies \(L_{k} > m\), thus \(\delta_{k}\) are well-defined.
    
    By assumption, the iterates of gradient descent satisfy \eqref{ineq:Armijo} with \(c\in \mathopen{[}1/2,1\mathopen{)}\), hence
    \begin{align*}
        f(x_{k+1}) - f(x_{k})
        \leq -c\alpha_{k}\Vert \nabla f(x_{k}) \Vert^{2}
        \leq -\frac{\alpha_{k}}{2}\Vert \nabla f(x_{k}) \Vert^{2}.
    \end{align*}
    Moreover, by strong convexity, we have that
    \begin{align*}
        f(x_{k}) - f(x^{\ast})
        \leq \langle \nabla f(x_{k}), x_{k}-x^{\ast}\rangle
        -\frac{m}{2}\Vert x_{k}-x^{\ast} \Vert^{2}.
    \end{align*}
    Next, expanding quadratic terms, it follows that
    \begin{align*}
        (1+\delta_{k})\Vert x_{k+1} - x^{\ast}\Vert^{2} - \Vert x_{k} - x^{\ast}\Vert^{2}
        =& (1+\delta_{k})(\alpha_{k}^{2}\Vert \nabla f(x_{t})\Vert^{2} -2\alpha_{k}\langle \nabla f(x_{k}), x_{k}-x^{\ast}\rangle)
        \\
        &+ \delta_{k}\Vert x_{k}-x^{\ast}\Vert^{2}.
    \end{align*}
    Now, from the definition of \(\delta_{k}\), we obtain
    \begin{align*}
        (1+\delta_{k})(1-m\alpha_{k})
        = \frac{Q_{k}}{Q_{k}-1}\frac{Q_{k}-1}{Q_{k}}
        = 1
        &&
        \text{and}
        &&
        (1+\delta_{k})m\alpha_{k}
        = \frac{Q_{k}}{Q_{k}-1}\frac{1}{Q_{k}}
        = \delta_{k}.
    \end{align*}
    Then, we put everything together to get
    \begin{align*}
        (1+\delta_{k})V(x_{k+1}) - V_{k}(x_{k})
        \leq& -(1+\delta_{k})(1-m\alpha_{k})\frac{\alpha_{k}}{2}\Vert \nabla f(x_{k}) \Vert^{2}
        \\
        &-(\delta_{k} - (1+\delta_{k})m\alpha_{k})\langle \nabla f(x_{k}), x_{k}-x^{\ast}\rangle
        \\
        \leq& -\frac{\alpha_{k}}{2}\Vert \nabla f(x_{k}) \Vert^{2}.
    \end{align*}
    Applying the above inequality inductively, it follows that 
    \begin{align*}
        V(x_{k+1})
        \leq V(x_{0})\prod_{t=1}^{k}\frac{1}{1+\delta_{t}}.
    \end{align*}
    Moreover, applying \eqref{ineq:descent_lemma} with \(y=x_{0}\) and \(x=x^{\ast}\), and noticing that \(\nabla f(x^{\ast})=0\), we obtain
    \begin{align*}
        V(x_{0})
        = f(x_{0}) - f(x^{\ast})
        + \frac{m}{2}\Vert x_{0}-x^{\ast} \Vert^{2}
        \leq \frac{L+m}{2}\Vert x_{0}-x^{\ast} \Vert^{2}.
    \end{align*}
    Furthermore, under the above assumptions, we have that \(\alpha_{k}\geq \min\{ \alpha_{0}, \rho\overline{\alpha}\}\), where \(\overline{\alpha} = 2(1-c)/L\), which implies that
    \begin{align*}
        1+\delta_{k} 
        = \frac{Q_{k}}{Q_{k}-1}
        = \frac{1}{1-m\alpha_{k}}
        \geq \frac{1}{1-m\min\{ \alpha_{0}, \rho\overline{\alpha}\}}.
    \end{align*}
    Finally, we put everything together and obtain
    \begin{align*}
        f(x_{k}) - f(x^{\ast})
        \leq V(x_{k+1})
        \leq& \frac{L+m}{2}\Vert x_{0}-x^{\ast} \Vert^{2}\prod_{t=1}^{k}\frac{1}{1+\delta_{t}}
        \\
        \leq& (1-m\min\{ \alpha_{0}, \rho\overline{\alpha}\})^{k}\frac{L+m}{2}\Vert x_{0}-x^{\ast} \Vert^{2}.
    \end{align*}
\end{proof}

\subsubsection{A convergence result for accelerated gradient descent}

To establish that adaptive backtracking preserves the convergence rate of accelerated gradient descent, we employ a Lyapunov argument based on the function \(V_{k}\) defined by
\begin{align}
    V_{t}(x_{k},y_{k})
    = f(y_{k}) - f(x^{\ast}) + \frac{m}{2}\Vert z_{t} - x^{\ast} \Vert^{2},
    \label{def:acc-AGD-Lyap}
\end{align}
where the point \(z_{t}=z_{t}(x_{k},y_{k})\) is defined as
\begin{align}
    z_{t}
    = x_{k} + \sqrt{Q_{t-1}}(x_{k}-y_{k}),
\end{align}
and the estimated condition number \(Q_{t}\) and estimated Lipschitz constant are given by
\begin{align}
    Q_{t} 
    = \begin{cases}
        L_{0}/m, & t < 0,
        \\
        L_{t}/m, & t \geq 0,
    \end{cases}
    &&
    \text{and}
    &&
    L_{t} = \frac{1}{\alpha_{t}}.
\end{align}
Note that the index \(t\) of \(z_{t}\) follows that of \(V_{t}\) but is independent of the indices of \(x_{k}\) and \(y_{k}\), which allows us to split the Lyapunov analysis in two auxiliary lemmas.
First, we show that for a fixed index \(k+1\), the Lyapunov function \(V_{k+1}\) decreases along consecutive AGD iterates at an accelerated rate.
Second, we bound by how much \(V_{k+1}\) can increase with respect to \(V_{k}\) for the same AGD iterate.

\begin{lemma}\label{lem:app-AGD-Lyap-1}
    Let \(f\) be Lipschitz-smooth and strongly convex.
    If the Lipschitz constant estimates \(L_{k}\) of accelerated gradient descent (\cref{alg:AGD}) are generated by adaptive backtracking (\cref{alg:ABLS}) using \(\hat{\rho}\) and \(v\) given by \eqref{def:armijo-params} with \(c\in\mathopen{[}1/2, 1\mathopen{)}\) and \(L_{0}>m\), then for \(k\geq 0\)
    \begin{align*}
        (1+\delta_{k+1})V_{k+1}(y_{k+1}, x_{k+1}) - V_{k+1}(y_{k}, x_{k})
        \leq 0,
    \end{align*}
    where \(\delta_{k+1} = 1/(\sqrt{Q_{k}} - 1)\).
\end{lemma}

\begin{proof}
    We start by splitting \((1+\delta_{k+1})(f(y_{k+1}) - f(x^{\ast}))\) into three further differences:
    \begin{align*}
        &(1+\delta_{k+1})(f(y_{k+1}) - f(x^{\ast}))
        -(f(y_{k}) - f(x^{\ast})
        \\
        =& (1+\delta_{k+1})(f(y_{k+1}) - f(x_{k}))
        + \delta_{k+1}(f(x_{k}) - f(x^{\ast}))
        + (f(x_{k}) - f(y_{k})).
    \end{align*}
    Since \(c\in\mathopen{[}1/2, 1\mathopen{)}\), then adaptive backtracking generates \(L_{k}\) such that
    \begin{align}
        (1+\delta_{k+1})(f(y_{k+1})-f(x_{k}))
        \leq&\ -(1+\delta_{k+1})\frac{1}{2L_{k}}\Vert \nabla f(x_{k}) \Vert^{2}.
        \label{ineq:app-AGD-Lyap-diff1}
    \end{align}
    Moreover, applying \eqref{ineq:app-strong-convexity} with \(x=x_{k}\) and \(y=x^{\ast}\) and using that \(f\) is convex, we get
    \begin{align}
        \delta_{k+1}(f(x_{k})-f(x^{\ast}))
        \leq&\ \delta_{k+1}\left\langle \nabla f(x_{k}), x_{k} - x^{\ast} \right\rangle -\delta_{k+1}\frac{m}{2}\Vert x_{k} - x^{\ast} \Vert^{2},
        \label{ineq:app-AGD-Lyap-diff2}
        \\
        f(x_{k})-f(y_{k})
        \leq&\ \left\langle \nabla f(x_{k}), x_{k} - y_{k} \right\rangle.
        \label{ineq:app-AGD-Lyap-diff3}
    \end{align}
    Next, we express the difference \(z_{k+1} - x^{\ast}\) as
    \begin{align*}
        z_{k+1} - x^{\ast}
        =&\ x_{k+1} + \sqrt{Q_{k}}(x_{k+1}-y_{k+1}) - x^{\ast}
        \\
        =&\ y_{k+1}+\beta_{k}(y_{k+1}-y_{k}) + \sqrt{Q_{k}}\beta_{k}(y_{k+1}-y_{k}) -x^{\ast}
        \\
        =&\ -\frac{1}{L_{k}}(1+\beta_{k}(1+\sqrt{Q_{k}}))\nabla f(x_{k}) + \beta_{k}(1+\sqrt{Q_{k}})(x_{k} - y_{k}) + x_{k} - x^{\ast}
        \\
        =&\ -\frac{1}{L_{k}}\sqrt{Q_{k}}\nabla f(x_{k}) + (\sqrt{Q_{k}}-1)(x_{k} - y_{k}) + x_{k} - x^{\ast},
    \end{align*}
    where we used the identities
    \begin{align*}
        1 + \beta_{k}(1 + \sqrt{Q_{k}})
        = \sqrt{Q_{t}}
        &&
        \text{and}
        &&
        \beta_{k}(1+\sqrt{Q_{k}})
        = \sqrt{Q_{k}} - 1.
    \end{align*}
    In the same vein, when expanding the 2-norm term \(\Vert z_{k+1} - x^{\ast} \Vert^{2}\) below, we use the following identities after colons to simplify the coefficients of terms before colons:
    \begin{align*}
        \Vert \nabla f(x_{k}) \Vert^{2}&:
        &
        (Q_{k}/L_{k}^{2})(m/2)
        =&\ 1/2L_{k},
        \\
        \langle \nabla f(x_{k}),x_{k} - y_{k} \rangle&:
        &
        m(1+\delta_{k+1})\sqrt{Q_{k}}(\sqrt{Q_{k}}-1)/L_{k}
        =&\ 1,
        \\
        \langle \nabla f(x_{k}),x_{k} - x^{\ast} \rangle&:
        &
        m(1+\delta_{k+1})\sqrt{Q_{k}}/L_{k}
        =&\ \delta_{k},
        \\
        \Vert x_{k} - y_{k} \Vert^{2}&:
        &
        (1+\delta_{k+1})(\sqrt{Q_{k}}-1)^{2}
        =&\ \sqrt{Q_{k}}(\sqrt{Q_{k}}-1),
        \\
        \langle x_{k} - y_{k},x_{k} - x^{\ast} \rangle&:
        &
        (1+\delta_{k+1})(\sqrt{Q_{k}}-1)
        =&\ \sqrt{Q_{k}}.
    \end{align*}
    As a result, the 2-norm difference in \((1+\delta_{k+1})V_{k+1}(y_{k+1},x_{k+1}) - V_{k+1}(y_{k},x_{k})\) becomes
    \begin{align}
        &(1+\delta_{k+1})\frac{m}{2}\Vert z_{k+1} - x^{\ast} \Vert^{2} 
        -\frac{m}{2}\Vert x_{k} - x^{\ast} + \sqrt{Q_{k}}(x_{k} - y_{k}) \Vert^{2}
        \nonumber\\
        =& \frac{1+\delta_{k+1}}{2L_{k}}\Vert \nabla f(x_{k}) \Vert^{2}
        -\left\langle \nabla f(x_{k}),x_{k} - y_{k} \right\rangle
        -\delta_{k}\left\langle \nabla f(x_{k}),x_{k} - x^{\ast} \right\rangle
        \nonumber\\
        &\frac{m}{2}\sqrt{Q_{k}}(\sqrt{Q_{k}}-1)\Vert x_{k} - y_{k} \Vert^{2}
        +\frac{m}{2}(2\sqrt{Q_{k}}\langle x_{k} - y_{k},x_{k} - x^{\ast} \rangle
        +(1+\delta_{k+1})\Vert x_{k} - x^{\ast} \Vert^{2})
        \nonumber\\
        &-\frac{m}{2}(Q_{k}\Vert x_{k} - y_{k} \Vert^{2} + 2\sqrt{Q_{k}}\langle x_{k} - y_{k},x_{k} - x^{\ast} \rangle
        +\Vert x_{k} - x^{\ast} \Vert^{2})
        \nonumber\\
        =& \frac{1+\delta_{k+1}}{2L_{k}}\Vert \nabla f(x_{k}) \Vert^{2}
        -\left\langle \nabla f(x_{k}),x_{k} - y_{k} \right\rangle
        -\delta_{k}\left\langle \nabla f(x_{k}),x_{k} - x^{\ast} \right\rangle
        \nonumber\\
        &-\frac{m}{2}\sqrt{Q_{k}}\Vert x_{k} - y_{k} \Vert^{2}
        + \delta_{k}\frac{m}{2}\Vert x_{k} - x^{\ast} \Vert^{2}.
        \label{id:app-AGD-Lyap-d4}
    \end{align}
    Finally, combining \labelcref{ineq:app-AGD-Lyap-diff1,ineq:app-AGD-Lyap-diff2,ineq:app-AGD-Lyap-diff3,id:app-AGD-Lyap-d4} and then canceling several terms, we obtain
    \begin{align*}
        (1+\delta_{k+1})V_{k+1}(y_{k+1}, x_{k+1}) - V_{k+1}(y_{k}, x_{k})
        \leq -\frac{m}{2}\sqrt{Q_{k}}\Vert x_{k} - y_{k} \Vert^{2} 
        \leq 0.
    \end{align*}
\end{proof}

\begin{lemma}\label{lem:app-AGD-Lyap-2}
    Let \(f\) be Lipschitz-smooth strongly convex.
    Given initial points \(x_{0}=y_{0}\), if the estimates \(L_{k}\) of the Lipschitz constant in accelerated gradient descent (\cref{alg:AGD}) are generated monotonically by adaptive backtracking (\cref{alg:ABLS} with \(L_{k}\) serving as the initial estimate for \(L_{k+1}\)) using \(\hat{\rho}\) and \(v\) given by \eqref{def:armijo-params} with \(c\in\mathopen{[}1/2, 1\mathopen{)}\) and \(L_{0}>m\), then for \(k\geq 0\)
    \begin{align*}
        V_{k+1}(y_{k}, x_{k})
        \leq \frac{Q_{k}^{2}}{Q_{k-1}^{2}}V_{k}(y_{k}, x_{k}).
    \end{align*}
\end{lemma}

\begin{proof}
    We argue by induction.
    If \(x_{0}\) and \(y_{0}\) match, then
    \begin{align*}
        z_{1}(y_{0}, x_{0}) 
        = x_{0} + Q_{0}(x_{0} - y_{0})
        = x_{0}
        = x_{0} + Q_{-1}(x_{0} - y_{0})
        = z_{0}(y_{0}, x_{0}).
    \end{align*}
    Moreover, \(Q_{-1}=Q_{0}\), by definition.
    Therefore, we have that
    \begin{align*}
        V_{1}(y_{0}, x_{0})
        &= f(y_{0}) - f(x^{\ast}) + \frac{m}{2}\Vert z_{1}(y_{0}, x_{0}) - x^{\ast} \Vert^{2}
        \\
        &= \frac{Q_{0}^{2}}{Q_{-1}^{2}}(f(y_{0}) - f(x^{\ast}) + \frac{m}{2}\Vert z_{0}(y_{0}, x_{0}) - x^{\ast} \Vert^{2})
        \\
        &= \frac{Q_{0}^{2}}{Q_{-1}^{2}}V_{0}(y_{0}, x_{0}),
    \end{align*}
    which establishes the base case.
    To prove the inductive step, we divide the analysis in two cases, each representing a possible sign of \(\langle x_{k} - y_{k}, x_{k} - x^{\ast} \rangle\). 
    For each case, we bound
    \begin{align}
        &\Vert x_{k} - x^{\ast} + \sqrt{Q_{k}}x_{k} - y_{k} \Vert^{2}-\Vert z_{k} - x^{\ast} \Vert^{2}
        \nonumber\\
        =&\ 2(\sqrt{Q_{k}}-\sqrt{Q_{k-1}})\langle x_{k} - x^{\ast},x_{k} - y_{k} \rangle
        + (Q_{k}-Q_{k-1})\Vert x_{k} - y_{k} \Vert^{2}.
        \label{id:app-AGD-Lyap-ascent-2norm-gap}
    \end{align}
    In turn, bounds on \eqref{id:app-AGD-Lyap-ascent-2norm-gap} translate into bounds on \(V_{k+1}(y_{k}, x_{k})-V_{k}(y_{k}, x_{k})\), since
    \begin{align}
        V_{k+1}(y_{k}, x_{k})-V_{k}(y_{k}, x_{k})
        =&\ \frac{m}{2}(\Vert x_{k} - x^{\ast} + \sqrt{Q_{k}}(x_{k} - y_{k}) \Vert^{2}-\Vert z_{k} - x^{\ast} \Vert^{2}).
        \label{ineq:app-AGD-Lyap-ascent-V-gap}
    \end{align}
    Then, to prove the inductive step, we express bounds on \eqref{ineq:app-AGD-Lyap-ascent-V-gap} in terms of \(V_{k+1}\) and \(V_{k}\).
    
    First, suppose \(\langle x_{k} - y_{k},x_{k} - x^{\ast} \rangle\geq 0\). 
    Also assuming \(L_{k}\geq L_{k-1}\), then \(\sqrt{Q_{k-1}}/\sqrt{Q_{k}}\leq 1\), so that
    \begin{align*}
        \sqrt{Q_{k}}-\sqrt{Q_{k-1}}
        \leq \frac{Q_{k}}{\sqrt{Q_{k}}} - \sqrt{Q_{k-1}}\frac{\sqrt{Q_{k-1}}}{\sqrt{Q_{k}}}
        = \frac{Q_{k}-Q_{k-1}}{\sqrt{Q_{k}}}.
    \end{align*}
    Hence, applying the above inequality to \eqref{id:app-AGD-Lyap-ascent-2norm-gap} and then adding a nonnegative \(\Vert x_{k} - x^{\ast} \Vert^{2}\) term to it, we get
    \begin{align}
        &\Vert x_{k} - x^{\ast} + \sqrt{Q_{k}}(x_{k} - y_{k}) \Vert^{2} - \Vert z_{k} - x^{\ast} \Vert^{2}
        \nonumber\\
        \leq&\ 2\frac{Q_{k}-Q_{k-1}}{\sqrt{Q_{k}}}\langle x_{k} - x^{\ast},x_{k} - y_{k} \rangle
        + (Q_{k}-Q_{k-1})\Vert x_{k} - y_{k} \Vert^{2}
        + \frac{Q_{k}-Q_{k-1}}{Q_{k}}\Vert x_{k} - x^{\ast} \Vert^{2}
        \nonumber\\
        =&\ \frac{Q_{k}-Q_{k-1}}{Q_{k}}\Vert x_{k} - x^{\ast} + \sqrt{Q_{k}}(x_{k} - y_{k}) \Vert^{2}.
        \label{ineq:app-AGD-Lyap-ascent-case1-aux}
    \end{align}
    Plugging \eqref{ineq:app-AGD-Lyap-ascent-case1-aux} back into \eqref{ineq:app-AGD-Lyap-ascent-V-gap} yields
    \begin{align}
        V_{k+1}(y_{k}, x_{k})-V_{k}(y_{k}, x_{k})
        &\leq \frac{Q_{k}-Q_{k-1}}{Q_{k}}\frac{m}{2}\Vert x_{k} - x^{\ast} + \sqrt{Q_{k}}(x_{k} - y_{k}) \Vert^{2}
        \nonumber\\
        &\leq \frac{Q_{k}-Q_{k-1}}{Q_{k}}V_{k+1}(y_{k}, x_{k}),
        \label{ineq:app-AGD-Lyap-ascent-case1-aux1}
    \end{align}
    where the last inequality follows from the definition of \(V_{k}\), as \(f(y_{k}) - f(x^{\ast})\geq 0\) implies
    \begin{align}
        V_{k+1}(y_{k}, x_{k})
        \geq \frac{m}{2}\Vert x_{k} - x^{\ast} + \sqrt{Q_{k}}(x_{k} - y_{k}) \Vert^{2}.
        \label{ineq:app-AGD-Lyap-ascent-lb}
    \end{align}
    Thus, rearranging terms in \eqref{ineq:app-AGD-Lyap-ascent-case1-aux1} and then multiplying both sides by \(Q_{k}/Q_{k-1}\), we obtain
    \begin{align*}
        V_{k+1}(y_{k}, x_{k})
        \leq \frac{Q_{k}}{Q_{k-1}}V_{k}(y_{k}, x_{k})
        \leq \frac{Q_{k}^{2}}{Q_{k-1}^{2}}V_{k}(y_{k}, x_{k}),
    \end{align*}
    where the second inequality holds because \(Q_{k}/Q_{k-1}\geq 1\).
    
    Now, suppose \(\langle x_{k} - y_{k},x_{k} - x^{\ast} \rangle < 0\).
    As in the previous case, we start by bounding the gap \eqref{id:app-AGD-Lyap-ascent-2norm-gap}. 
    But given the negative sign of \(\langle x_{k} - y_{k},x_{k} - x^{\ast} \rangle\) term, we bound the \(\Vert x_{k} - y_{k} \Vert^{2}\) term instead.
    To this end, we first invoke the assumption that \(\langle x_{k} - y_{k},x_{k} - x^{\ast} \rangle < 0\) to establish that
    \begin{align}
        \Vert y_{k} - x^{\ast} \Vert^{2}
        &= \Vert x_{k} - x^{\ast} - (x_{k} - y_{k}) \Vert^{2}
        \nonumber\\
        &= \Vert x_{k} - x^{\ast} \Vert^{2} -2\langle x_{k} - x^{\ast},x_{k} - y_{k} \rangle + \Vert x_{k} - y_{k} \Vert^{2}
        \nonumber\\
        &\geq \Vert x_{k} - x^{\ast} \Vert^{2}.
        \label{ineq:app-AGD-Lyap-ascent-yk-ub}
    \end{align}
    To use the above inequality on \eqref{id:app-AGD-Lyap-ascent-2norm-gap}, first we rewrite it more conveniently as
    \begin{align}
        &\Vert x_{k} - x^{\ast} + \sqrt{Q_{k}}x_{k} - y_{k} \Vert^{2}-\Vert z_{k} - x^{\ast} \Vert^{2}
        \nonumber\\
        =& 2\frac{\sqrt{Q_{k}}-\sqrt{Q_{k-1}}}{\sqrt{Q_{k}}}\langle x_{k} - x^{\ast},\sqrt{Q_{k}}(x_{k} - y_{k}) \rangle		
        + \sqrt{Q_{k}}(\sqrt{Q_{k}}-\sqrt{Q_{k-1}})\Vert x_{k} - y_{k} \Vert^{2}
        \nonumber\\
        &+ \sqrt{Q_{k-1}}(\sqrt{Q_{k}}-\sqrt{Q_{k-1}})\Vert x_{k} - y_{k} \Vert^{2}
        \pm \frac{\sqrt{Q_{k}}-\sqrt{Q_{k-1}}}{\sqrt{Q_{k}}}\Vert x_{k} - x^{\ast} \Vert^{2}
        \nonumber\\
        =&\ \frac{\sqrt{Q_{k}}-\sqrt{Q_{k-1}}}{\sqrt{Q_{k}}}\Vert x_{k} - x^{\ast} + \sqrt{Q_{k}}(x_{k} - y_{k}) \Vert^{2}
        + \sqrt{Q_{k-1}}(\sqrt{Q_{k}}-\sqrt{Q_{k-1}})\Vert x_{k} - y_{k} \Vert^{2}
        \nonumber\\
        &- \frac{\sqrt{Q_{k}}-\sqrt{Q_{k-1}}}{\sqrt{Q_{k}}}\Vert x_{k} - x^{\ast} \Vert^{2}.
        \label{id:app-AGD-Lyap-ascent-aux1}
    \end{align}
    Next, we use the following elementary inequality, which is a consequence of \(\Vert a/c + bc \Vert^{2} \geq 0\):
    \begin{align*}
        \Vert a - b \Vert^{2}
        = \Vert a \Vert^{2} - 2\langle a, b \rangle + \Vert b \Vert^{2}
        \leq (1 + 1/c^{2})\Vert a \Vert^{2} + (1 + c^{2})\Vert b \Vert^{2}.
    \end{align*}
    Namely, we apply the above inequality with \(a=z_{k} - x^{\ast}\), \(b=x_{k} - x^{\ast}\) and \(c^{2}=\sqrt{Q_{k-1}}/\sqrt{Q_{k}}\) to bound the \(\Vert x_{k} - y_{k} \Vert^{2}\) term on \eqref{id:app-AGD-Lyap-ascent-aux1} and obtain 
    \begin{align}
        &\sqrt{Q_{k-1}}(\sqrt{Q_{k}}-\sqrt{Q_{k-1}})\Vert x_{k} - y_{k} \Vert^{2}
        \nonumber\\
        =&\ \sqrt{Q_{k-1}}(\sqrt{Q_{k}}-\sqrt{Q_{k-1}})\Vert x_{k} - y_{k} \pm (x_{k} - x^{\ast})/\sqrt{Q_{k-1}}\Vert^{2}
        \nonumber\\
        =&\ \frac{\sqrt{Q_{k}}-\sqrt{Q_{k-1}}}{\sqrt{Q_{k-1}}}\Vert z_{k} - x^{\ast} - (x_{k} - x^{\ast})\Vert^{2}
        \nonumber\\
        \leq&\ \frac{\sqrt{Q_{k}}-\sqrt{Q_{k-1}}}{\sqrt{Q_{k-1}}}\Bigl( 1+\frac{\sqrt{Q_{k}}}{\sqrt{Q_{k-1}}} \Bigr)\Vert z_{k} - x^{\ast} \Vert^{2}
        + \frac{\sqrt{Q_{k}}-\sqrt{Q_{k-1}}}{\sqrt{Q_{k-1}}}\Bigl( 1+\frac{\sqrt{Q_{k-1}}}{\sqrt{Q_{k}}} \Bigr)\Vert x_{k} - x^{\ast} \Vert^{2}
        \nonumber\\
        =&\ \frac{Q_{k}-Q_{k-1}}{Q_{k-1}}\Vert z_{k} - x^{\ast} \Vert^{2}
        +\frac{\sqrt{Q_{k}}-\sqrt{Q_{k-1}}}{\sqrt{Q_{k}}}\frac{\sqrt{Q_{k}}+\sqrt{Q_{k-1}}}{\sqrt{Q_{k-1}}}\Vert x_{k} - x^{\ast} \Vert^{2}.
        \label{ineq:app-AGD-Lyap-ascent-aux1}
    \end{align}
    Plugging \eqref{ineq:app-AGD-Lyap-ascent-aux1} back into \eqref{id:app-AGD-Lyap-ascent-aux1} and then using \eqref{ineq:app-AGD-Lyap-ascent-yk-ub}, we get
    \begin{align}
        &\Vert x_{k} - x^{\ast} + \sqrt{Q_{k}}(x_{k} - y_{k}) \Vert^{2}-\Vert z_{k} - x^{\ast} \Vert^{2}
        \nonumber\\
        \leq&\ \frac{\sqrt{Q_{k}}-\sqrt{Q_{k-1}}}{\sqrt{Q_{k}}}\Vert x_{k} - x^{\ast} + \sqrt{Q_{k}}(x_{k} - y_{k}) \Vert^{2}
        +\frac{Q_{k}-Q_{k-1}}{Q_{k-1}}\Vert z_{k} - x^{\ast} \Vert^{2}
        \nonumber\\
        &+ \frac{\sqrt{Q_{k}}-\sqrt{Q_{k-1}}}{\sqrt{Q_{k}}}\Bigl( \frac{\sqrt{Q_{k}}+\sqrt{Q_{k-1}}}{\sqrt{Q_{k-1}}} -1 \Bigr)\Vert x_{k} - x^{\ast} \Vert^{2}
        \nonumber\\
        \leq&\ \frac{\sqrt{Q_{k}}-\sqrt{Q_{k-1}}}{\sqrt{Q_{k}}}\Vert x_{k} - x^{\ast} + \sqrt{Q_{k}}(x_{k} - y_{k}) \Vert^{2}
        +\frac{Q_{k}-Q_{k-1}}{Q_{k-1}}\Vert z_{k} - x^{\ast} \Vert^{2}
        \nonumber\\
        &+ \frac{\sqrt{Q_{k}}-\sqrt{Q_{k-1}}}{\sqrt{Q_{k-1}}}\Vert y_{k} - x^{\ast} \Vert^{2}.
        \label{ineq:app-AGD-Lyap-ascent-aux2}
    \end{align}
    
    In turn, plugging \eqref{ineq:app-AGD-Lyap-ascent-aux2} back into \eqref{ineq:app-AGD-Lyap-ascent-V-gap} and then using the assumptions that \(m\geq m\) and \(m\leq m\) yields
    \begin{align}
        &V_{k+1}(y_{k}, x_{k})-V_{k}(y_{k}, x_{k})
        \nonumber\\
        \leq&\ \frac{m}{2}\frac{\sqrt{Q_{k}}-\sqrt{Q_{k-1}}}{\sqrt{Q_{k}}}\Vert x_{k} - x^{\ast} + \sqrt{Q_{k}}(x_{k} - y_{k}) \Vert^{2}
        + \frac{m}{2}\frac{Q_{k}-Q_{k-1}}{Q_{k-1}}\Vert z_{k} - x^{\ast} \Vert^{2}
        \nonumber\\
        &+ \frac{m}{2}\frac{\sqrt{Q_{k}}-\sqrt{Q_{k-1}}}{\sqrt{Q_{k-1}}}\Vert y_{k} - x^{\ast} \Vert^{2}
        \nonumber\\
        \leq&\ \frac{\sqrt{Q_{k}}-\sqrt{Q_{k-1}}}{\sqrt{Q_{k}}}\frac{m}{2}\Vert x_{k} - x^{\ast} + \sqrt{Q_{k}}(x_{k} - y_{k}) \Vert^{2}
        + \frac{Q_{k}-Q_{k-1}}{Q_{k-1}}\frac{m}{2}\Vert z_{k} - x^{\ast} \Vert^{2}
        \nonumber\\
        &+ \frac{\sqrt{Q_{k}}-\sqrt{Q_{k-1}}}{\sqrt{Q_{k-1}}}\frac{m}{2}\Vert y_{k} - x^{\ast} \Vert^{2}.
        \label{ineq:app-AGD-Lyap-ascent-aux3}
    \end{align}
    Now, as in \eqref{ineq:app-AGD-Lyap-ascent-lb}, the fact that \(f(y_{k}) - f(x^{\ast})\geq 0\) implies
    \begin{align}
        V_{k}(y_{k}, x_{k})
        = f(y_{k}) - f(x^{\ast}) + \frac{m}{2}\Vert z_{k} - x^{\ast}\Vert^{2}
        \geq \frac{m}{2}\Vert z_{k} - x^{\ast}\Vert^{2}.
        \label{ineq:app-AGD-Lyap-ascent-lb2}
    \end{align}
    In the same vein, applying \eqref{ineq:app-strong-convexity} with \(x=x^{\ast}\) and \(y=y_{k}\) to the definition of \(V_{k}\), we obtain
    \begin{align}
        V_{k}(y_{k}, x_{k})
        = f(y_{k}) - f(x^{\ast}) + \frac{m}{2}\Vert z_{k} - x^{\ast}\Vert^{2}
        \geq \frac{m}{2}\Vert y_{k} - x^{\ast}\Vert^{2}.
        \label{ineq:app-AGD-Lyap-ascent-lb3}
    \end{align}
    Plugging in \eqref{ineq:app-AGD-Lyap-ascent-lb}, \eqref{ineq:app-AGD-Lyap-ascent-lb2} and \eqref{ineq:app-AGD-Lyap-ascent-lb3} back into \eqref{ineq:app-AGD-Lyap-ascent-aux3}, and then moving all \(V^{{acc}}_{k+1}(y_{k}, x_{k})\) terms to the left-hand side and all \(V_{k}(y_{k}, x_{k})\) to the right-hand side, we obtain
    \begin{align}
        \frac{\sqrt{Q_{k-1}}}{\sqrt{Q_{k}}}V_{k+1}(y_{k}, x_{k})
        \leq&\ \Bigl( \frac{Q_{k}}{Q_{k-1}} + \frac{\sqrt{Q_{k}}-\sqrt{Q_{k-1}}}{\sqrt{Q_{k-1}}} \Bigr)V_{k}(y_{k}, x_{k})
        \label{ineq:app-AGD-Lyap-ascent-aux4}
    \end{align}
    Multiplying both sides of \eqref{ineq:app-AGD-Lyap-ascent-aux4} by \(\sqrt{Q_{k}}/\sqrt{Q_{k-1}}\), and then using the fact that \(\sqrt{Q_{k}}\geq \sqrt{Q_{k-1}}\) yields
    \begin{align*}
        V_{k+1}(y_{k}, x_{k})
        \leq \frac{\sqrt{Q}_{k}}{\sqrt{Q_{k-1}}}\Bigl(\frac{Q_{k}}{Q_{k-1}} + \frac{\sqrt{Q_{k}}-\sqrt{Q_{k-1}}}{\sqrt{Q_{k-1}}} \Bigr)V_{k}(y_{k}, x_{k})
        \leq \frac{Q_{k}^{2}}{Q_{k-1}^{2}}V_{k}(y_{k}, x_{k}),
    \end{align*}
    where the last inequality above holds because \(Q_{k}\geq Q_{k-1}\) implies the following equivalences hold:
    \begin{align*}
        \frac{Q_{k}}{Q_{k-1}} + \frac{\sqrt{Q_{k}}-\sqrt{Q_{k-1}}}{\sqrt{Q_{k-1}}}
        \leq \frac{Q_{k}^{3/2}}{Q_{k-1}^{3/2}}
        \iff&\
        \sqrt{Q_{k-1}}Q_{k} + Q_{k-1}(\sqrt{Q_{k}}-\sqrt{Q_{k-1}})
        \leq Q_{k}^{3/2},
        \\
        \iff&\
        Q_{k-1}(\sqrt{Q_{k}}-\sqrt{Q_{k-1}})
        \leq Q_{k}(\sqrt{Q_{k}}-\sqrt{Q_{k-1}}).
    \end{align*}
    Therefore, both when \(\langle x_{k} - x^{\ast},x_{k} - y_{k} \rangle\geq 0\) and when \(\langle x_{k} - x^{\ast},x_{k} - y_{k} \rangle< 0\), the inequality
    \begin{align*}
        V_{k+1}(y_{k}, x_{k})
        \leq \frac{Q_{k}^{2}}{Q_{k-1}^{2}}V_{k}(y_{k}, x_{k})
    \end{align*}
    holds generically for all \(y_{k}, x_{k}\), proving the lemma.
\end{proof}

\begin{proposition}
    Let \(f\) be Lipschitz-smooth strongly convex.
    Given initial points \(x_{0}=y_{0}\), if the estimates \(L_{k}\) of the Lipschitz constant in accelerated gradient descent (\cref{alg:AGD}) are generated monotonically by adaptive backtracking (\cref{alg:ABLS} with \(L_{k}\) serving as the initial estimate for \(L_{k+1}\)) using \(\hat{\rho}\) and \(v\) given by \eqref{def:armijo-params} with \(c\in\mathopen{[}1/2, 1\mathopen{)}\) and \(L_{0}>m\), then for \(k\geq 0\)
    \begin{align*}
        f(y_{k+1}) - f(x^{\ast})
        \leq \Biggl( \frac{\sqrt{Q} - \sqrt{2(1-c)\rho}}{\sqrt{Q}} \Biggr)^{k}\frac{Q^{2}}{4(1-c)^{2}\rho^{2}}\frac{L+m}{2}\Vert x_{0} - x^{\ast} \Vert^{2}.
    \end{align*}
\end{proposition}

\begin{proof}
    Combining \cref{lem:app-AGD-Lyap-1,lem:app-AGD-Lyap-2}, we have that for every \(k\geq 0\)
    \begin{align*}
        V_{k+1}(y_{k+1},x_{k+1})
        \leq \frac{1}{1+\delta_{k}}V_{k+1}(y_{k},x_{k})
        \leq \frac{1}{1+\delta_{k}}\frac{Q_{k}^{2}}{Q_{k-1}^{2}}V_{k}(y_{k},x_{k}).
    \end{align*}
    Moreover, from \cref{prop:app-armijo-ss-lb} and the assumption that \(L_{0} > m\), it follows that
    \begin{align*}
        L_{k} 
        \leq \max\{ L_{0}, L/(2(1-c)\rho) \}
        \leq \max\{ m, L/(2(1-c)\rho) \}
    \end{align*}
    and, in turn, we obtain
    \begin{align}
        \frac{1}{1+\delta_{k}}
        \leq \frac{\sqrt{Q_{k}} - 1}{\sqrt{Q_{k}}}
        \leq \frac{\sqrt{Q} - \sqrt{2(1-c)\rho}}{\sqrt{Q}}
        &&
        \text{where}
        &&
        Q = \frac{L}{m}.
    \end{align}
    Furthermore, assuming \(y_{0}=x_{0}\), we have that
    \begin{align*}
        V_{0}(y_{0}, x_{0})
        &= f(y_{0}) - f(x^{\ast}) + \frac{m}{2}\Vert z_{0} - x^{\ast} \Vert^{2}
        \leq \frac{L+m}{2}\Vert x_{0} - x^{\ast} \Vert^{2}.
    \end{align*}
    Arguing inductively, all but \(Q_{k}^{2}\) and \(Q_{-1}^{2}=Q_{0}^{2}\) cancel and, since \(L_{0} > m\), we get
    \begin{align*}
        f(y_{k+1}) - f(x^{\ast})
        &\leq V_{k+1}(y_{k+1},x_{k+1})
        \\
        &\leq \Biggl( \frac{\sqrt{Q} - \sqrt{2(1-c)\rho}}{\sqrt{Q}} \Biggr)^{k}\frac{L_{k}^{2}}{L_{0}^{2}}V_{0}(y_{0},x_{0})
        \\
        &\leq \Biggl( \frac{\sqrt{Q} - \sqrt{2(1-c)\rho}}{\sqrt{Q}} \Biggr)^{k}\frac{Q^{2}}{4(1-c)^{2}\rho^{2}}\frac{L+m}{2}\Vert x_{0} - x^{\ast} \Vert^{2}.
    \end{align*}
\end{proof}

\newpage
\section{Methods}
\label{subsec:app-log-reg-methods}

In this subsection, we briefly state standard implementations of the base methods that we use in the paper.
For the sake of simplicity, we only state a single iteration of the corresponding method.

\cref{alg:GD,alg:AGD} summarize gradient descent and Nesterov's accelerated gradient descent in the formulation with constant momentum coefficient \cite[2.2.22]{Nesterov2018}. 
\cref{alg:Adagrad} summarizes Adagrad \citep{Duchi2011}.

To state the last base method that we consider in this paper, we must introduce an auxiliary operator.
Given a convex Lipschitz-smooth function \(f\) with Lipschitz constant \(L\) and a continuous convex function, the proximal operator \(p_{L}\) is defined by
\begin{align}
    p_{L}(y) \coloneqq  \argmin_{x}\left\{ g(x) + \dfrac{L}{2} \left\lVert x - \Bigl( y - \dfrac{1}{L}\nabla f(y) \Bigr) \right\rVert^{2} \right\}.
\end{align}
With this definition, we can state \cref{alg:FISTA}, which summarizes FISTA \citep{Beck2009}.

\noindent\makebox[\textwidth][c]{%
\begin{minipage}[t]{0.8\textwidth}
\begin{algorithm}[H]
    \centering
    \begin{algorithmic}[1]
		\Require \hspace{0em}$x_{k}, \nabla f(x_{k}), \alpha_{k}>0$
		\Ensure $x_{k+1}$
		\State $x_{k+1} \gets x_{k} - \alpha_{k} \nabla f(x_{k}) $
    \end{algorithmic}
    \caption{Gradient Descent.}
    \label{alg:GD}
\end{algorithm}
\end{minipage}
}

\noindent\makebox[\textwidth][c]{%
\begin{minipage}[t]{0.8\textwidth}
\begin{algorithm}[H]
    \centering
    \begin{algorithmic}[1]
		\Require \hspace{0em}$x_{k}, y_{k}, \nabla f(x_{k}), L_{k} > m > 0$
		\Ensure $x_{k+1}, y_{k+1}$
		\State $y_{k+1} \gets x_{k} - (1/L_{k}) \nabla f(x_{k}) $
            \State $\beta_{k} \gets \frac{\sqrt{L_{k}} - \sqrt{m}}{\sqrt{L_{k}} + \sqrt{m}}$
            \State $x_{k+1} \gets (1 + \beta_{k})y_{k+1} - \beta_{k} y_{k}$
    \end{algorithmic}
    \caption{Nesterov's accelerated gradient descent \cite[2.2.22]{Nesterov2018}.}
    \label{alg:AGD}
\end{algorithm}
\end{minipage}
}

\noindent\makebox[\textwidth][c]{%
\begin{minipage}[t]{0.8\textwidth}
\begin{algorithm}[H]
    \centering
    \caption{Adagrad \citep{Duchi2011}. Superscript \(i\) means the \(i\)-th entry of the vector.}
    \label{alg:Adagrad}
    \begin{algorithmic}[1]
		\Require \hspace{0em}$x_{k}, \nabla f(x_{k}), y_{k}, x_{k} \alpha_{k}>0$
		\Ensure $x_{k+1}, s_{k+1}$
            \State $s_{k+1}^{i} = y_{k}, x_{k}^{i} + (\nabla f(x_{k})^{i})^{2}$
		\State $x_{k+1}^{i} \gets x_{k}^{i} - \frac{\alpha_{k}}{\sqrt{s_{k+1}^{i}}} \nabla f(x_{k}^{i}) $
    \end{algorithmic}
\end{algorithm}
\end{minipage}
}

\noindent\makebox[\textwidth][c]{%
\begin{minipage}[t]{0.8\textwidth}
\begin{algorithm}[H]
    \centering
    \caption{FISTA \citep{Beck2009}.}
    \label{alg:FISTA}
    \begin{algorithmic}[1]
		\Require \hspace{0em}$x_{k}, x_{k-1}, y_{k}, t_{k}, \nabla f(x_{k})$
		\Ensure $x_{k+1}, y_{k+1}, t_{k+1}$
		\State $x_{k+1} \gets p_{L}(y_{k}) $
            \State $t_{k+1} \gets \frac{1 + \sqrt{1 + 4t_{k}^2}}{2}$
            \State $y_{k+1} \gets x_{k} + \frac{t_{k} - 1}{t_{k+1}} (x_{k} - x_{k-1})$
    \end{algorithmic}
\end{algorithm}
\end{minipage}
}

\newpage
\section{Logistic Regression Experiments}
\label{sec:app-log-reg}

In this section, we provide further details of the logistic regression experiments and present full plots of all runs.

\begin{table}[H]
    \centering
    \caption{Details of datasets and method precisions used in the logistic regression problem.}
    \begin{tabular}{lrrcccc}
        \toprule
        \textbf{dataset} & \textbf{datapoints} & \textbf{dimensions} & \textbf{AGD} & \textbf{GD} & \textbf{GD (monotone)} & \textbf{Adagrad}\\ 
        \hline
        a9a & 32561 & 123 & \(\num{1e-9}\) & \(\num{1e-6}\) & \(\num{1e-5}\) & \(\num{1e-6}\)\\
        gisette{\_}scale & 6000 & 5000 & \(\num{1e-9}\) & \(\num{1e-9}\) & \(\num{1e-5}\) & \(\num{1e-9}\)\\
        MNIST & 60000 & 784 & \(\num{1e-9}\) & \(\num{1e-6}\) & \(\num{1e-3}\) & \(\num{1e-9}\)\\
        mushrooms & 8124 & 112 & \(\num{1e-9}\) & \(\num{1e-9}\) & \(\num{1e-5}\) & \(\num{1e-9}\)\\
        phishing & 11055 & 68 & \(\num{1e-9}\) & \(\num{1e-9}\) & \(\num{1e-6}\) & \(\num{1e-6}\)\\
        protein & 102025 & 75 & \(\num{1e-9}\) & \(\num{1e-9}\) & \(\num{1e-5}\) & \(\num{1e-9}\)\\
        web-1 & 2477 & 300 & \(\num{1e-9}\) & \(\num{1e-9}\) & \(\num{1e-8}\) & \(\num{1e-9}\)\\
        \bottomrule
    \end{tabular}
    \label{tab:app-logreg-dataset-details}
\end{table}

\noindent {\bf Dataset details.}
We take observation from seven datasets: \textsc{a9a}, \textsc{gisette{\_}scale} (\textsc{g\_scale}), \textsc{mushrooms}, \textsc{phishing} and \textsc{web-1} from LIBSVM \citep{Chang2011}, \textsc{protein} from KDD Cup 2004 \citep{Caruana2004} and MNIST \citep{LeCun1998}
The dataset \textsc{a9a} is a preprocessed version of the \textsc{adult} dataset \citep{Becker1996}, while \textsc{web-1} is subsample of the \textsc{web} dataset \citep{Platt1998}.

\noindent {\bf Initialization details.}
For Lipschitz-smooth problems, a step-size of \(1/\bar{L}\) is guaranteed to satisfy the Armijo condition (with \(c=1/2\)) if \(\bar{L}\geq L\).
Accordingly, we consider four choices of initial step-sizes, \(\alpha= \{\num{1e1}, \num{1e2}, \num{1e3}, \num{1e4}\}/\bar{L}\), which capture the transition from initial step-sizes that do not require adjustments to satisfy the Armijo condition to step-sizes that do.
In practice, \(L\) is unknown and the transition would occur as one attempted an arbitrary initial step-size and adjusted it correspondingly until the line search was activated.
Hence, using \(\bar{L}\) to anchor the choice of initial step-sizes is merely an educated guess of the transition values that would be found in practice.
We adopt the standard choice $c = \num{1e-4}$ \cite[p.~62]{Nocedal2006} in \eqref{ineq:Armijo} for BLS used with GD and Adagrad but, motivated by both theory and practice, we choose \(c=1/2\) in the case of AGD.
Also, we use the regularization parameter \(\gamma\) as the strong convexity parameter input for AGD.

\noindent {\bf Evaluation details.}
We run all base method and their variants for long enough to produce solutions with designated precision;
Then, we account for the number of gradient and function evaluations and elapsed time each variant takes to produce that solution.
Finally, for each BLS variant we average those numbers over the four initial step-sizes that we considered.
All methods compute exactly one gradient per iteration.
To account for elapsed time, we record wall clock time after every iteration.
Although somewhat imprecise, elapsed time reflects the relative computational cost of gradient and function evaluations and, especially in larger problems, is a reasonable metric to compare performance.

\noindent {\bf Additional comments.} 
We make the following additional remarks and observations:

\begin{itemize}[leftmargin=*]
    \item We considered two ways to initialize the step-size for line search at each iteration: (1) using the step-size from the previous iteration and (2) using the same fixed step-size at every iteration.
    We refer to the corresponding line search subroutines as \emph{monotone} and \emph{memoryless}.
    The monotone variants are robust to every choice of \(\rho\) while some values of \(\rho\) may turn the memoryless variants of AGD unstable or unacceptably slow. 
    When the memoryless variants work, however, they generally work much better than the corresponding monotone variants and the baseline methods.
    
    \item Monotone backtracking is not as appealing as memoryless backtracking because although both variants take fewer iterations than the baseline method does to reach a given precision, the savings in iterations generated by the monotone variants are not enough to outweigh the additional computational cost of function evaluations that the same variants accrue.
    Therefore, we only report results for the memoryless variant in the main text and defer results for the monotone variant to \cref{sec:app-logreg-monotone}.
    \looseness=-1

    \item The initial step-sizes greatly impact performance. For some starting step-sizes, vanilla backtracking is better suited for finding the optimal solution than our adaptive method. However, we find that there tends to be more variance in the performance of vanilla backtracking.  

    \item  When \(\bar{L}\) is a good estimate of the true Lipschitz constant, the computational cost of function evaluations may outweigh the savings in gradient evaluation and even memoryless backtracking might not improve on the baseline method.
    This is the case for the \textsc{covtype} dataset from LIBSVM \citep{Chang2011}, as shown by \cref{fig:logreg_covtype_agd}, in \cref{sec:app-logreg-memoryless}.

    \item The corresponding stable values of \(\rho\) for the adaptive counterpart of AGD lie in the upper interval \(\mathopen{(}0.7,1\mathopen{)}\) and usually greater values of \(\rho\) make the adaptive variant more stable but also more computationally expensive.
    AGD with regular memoryless backtracking fails to consistently converge for values of \(\rho\) outside the interval \(\mathopen{(}0.3,0.5\mathopen{)}\). 
    In fact, on \textsc{covtype}, for at least one of the initial step-sizes, AGD with regular memoryless backtracking line search fails to converge.
    On the other hand, as shown in \cref{fig:logreg_covtype_agd} in \cref{sec:app-logreg-memoryless}, the adaptive variant converges for \(\rho=0.9\) and even for \(\rho=0.7\), the more unstable end of feasible \(\rho\) values.

\end{itemize}
\subsection{Monotone Backtracking Line Search}
\label{sec:app-logreg-monotone}

\begin{figure}[H]
    \begin{subfigure}{\textwidth}
         \centering
         \includegraphics[height=0.18\textheight]{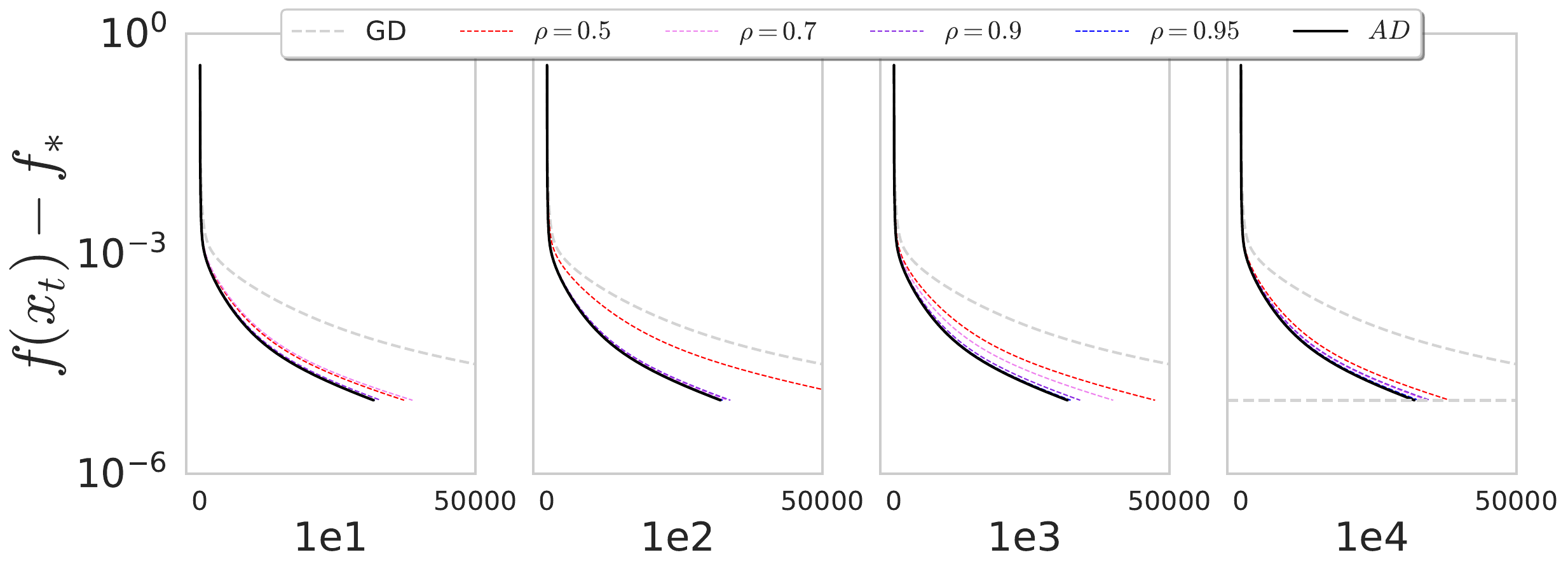}
         \caption{\textsc{adult}.}
         \label{fig:logreg_monotone_adult_gd}
    \end{subfigure}

    \begin{subfigure}{\textwidth}
         \centering
         \includegraphics[height=0.18\textheight]{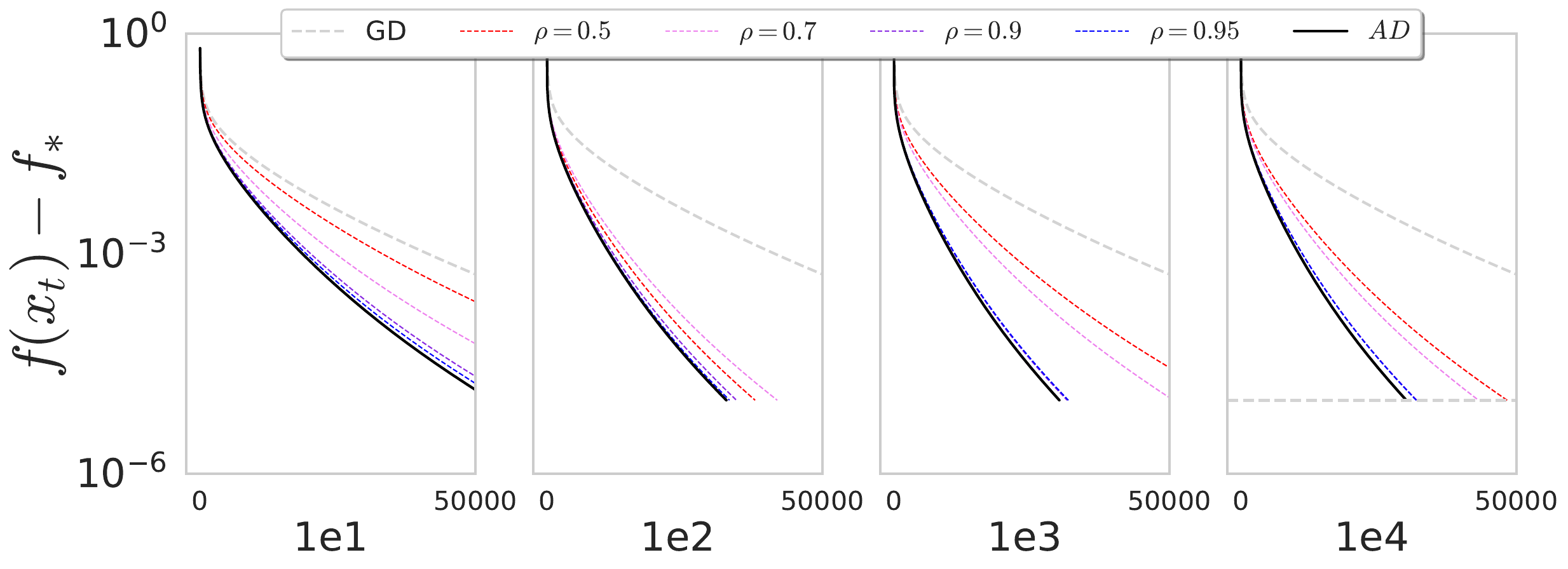}
         \caption{\textsc{gisette\_scale}.}
         \label{fig:logreg_monotone_gisette_gd}
    \end{subfigure}

    \begin{subfigure}{\textwidth}
         \centering
         \includegraphics[height=0.18\textheight]{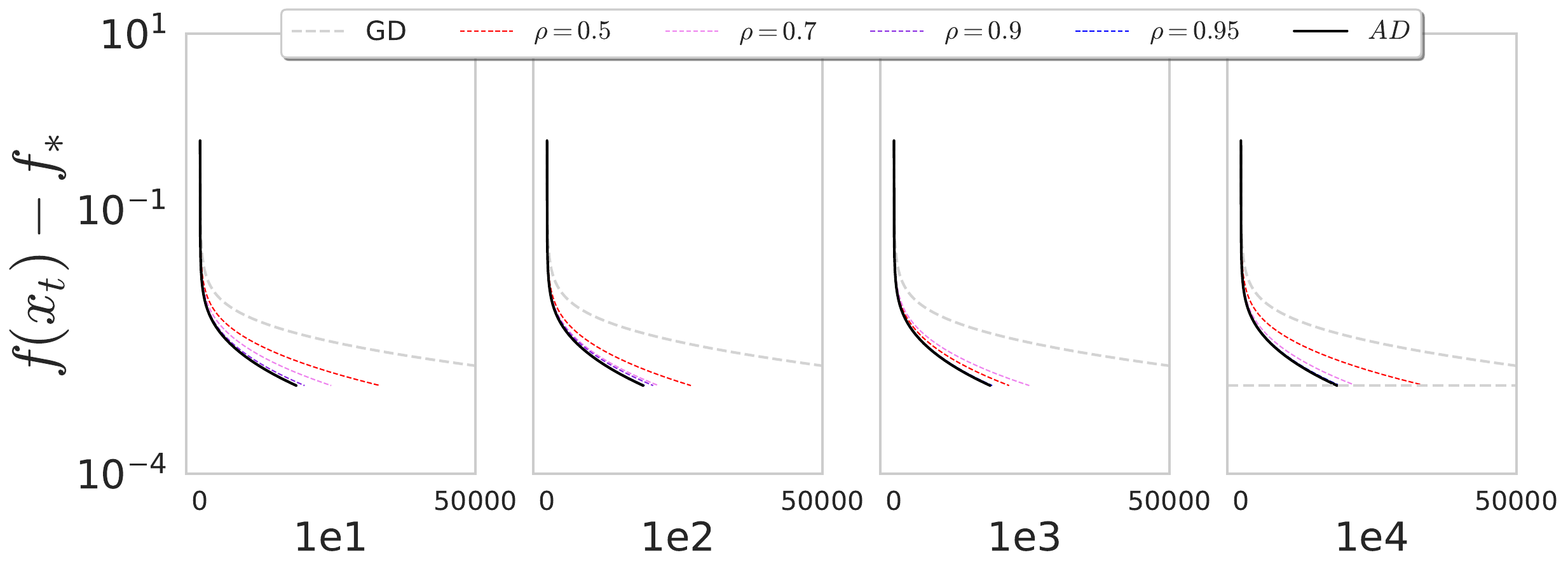}
         \caption{\textsc{MNIST}.}
         \label{fig:logreg_monotone_MNIST_gd}
    \end{subfigure}

    \begin{subfigure}{\textwidth}
         \centering
         \includegraphics[height=0.18\textheight]{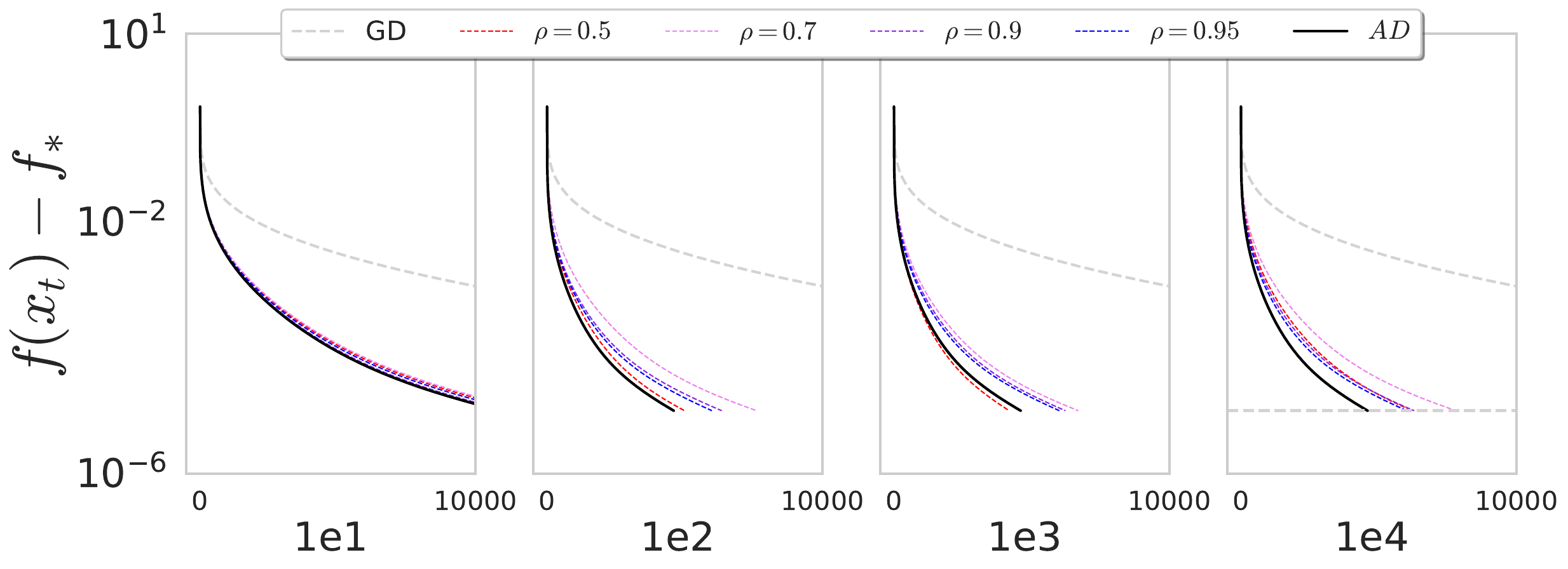}
         \caption{\textsc{mushrooms}.}
         \label{fig:logreg_monotone_mushrooms_gd}
    \end{subfigure}

    \caption{Logistic regression on four different datasets and four initial step-sizes \(\alpha_0= \{\num{1e1}, \num{1e2}, \num{1e3}, \num{1e4}\}/\bar{L}\): suboptimality gap for GD, GD with standard backtracking line search using \(\rho\in \{0.5, 0.7, 0.9, 0.95\}\) and GD with adaptive memoryless backtracking line search using \(\rho = 0.9\). The light gray horizontal dashed line shows the precision used to compute performance for each dataset.}
    \label{fig:logreg_monotone_plots_1/2_gd}
\end{figure}

\begin{figure}[H]
    \begin{subfigure}{\textwidth}
         \centering
         \includegraphics[height=0.18\textheight]{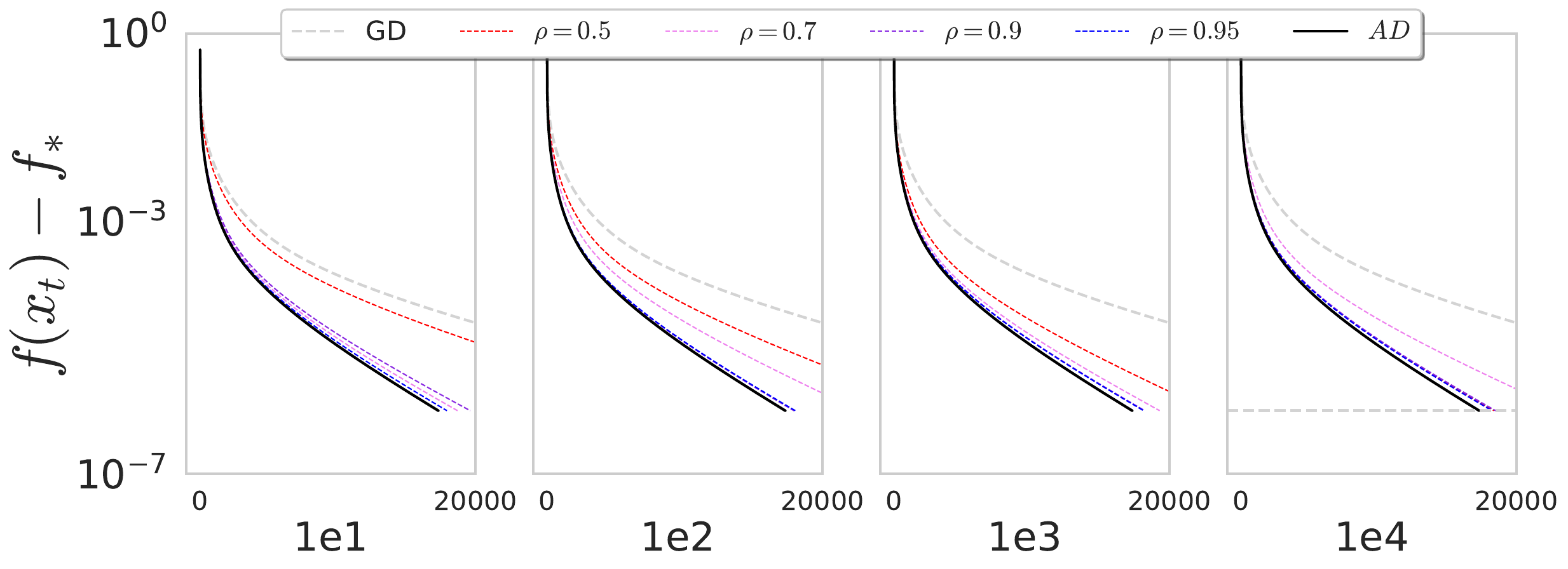}
         \caption{\textsc{phishing}.}
         \label{fig:logreg_monotone_adult_phising_gd}
    \end{subfigure}

    \begin{subfigure}{\textwidth}
         \centering
         \includegraphics[height=0.18\textheight]{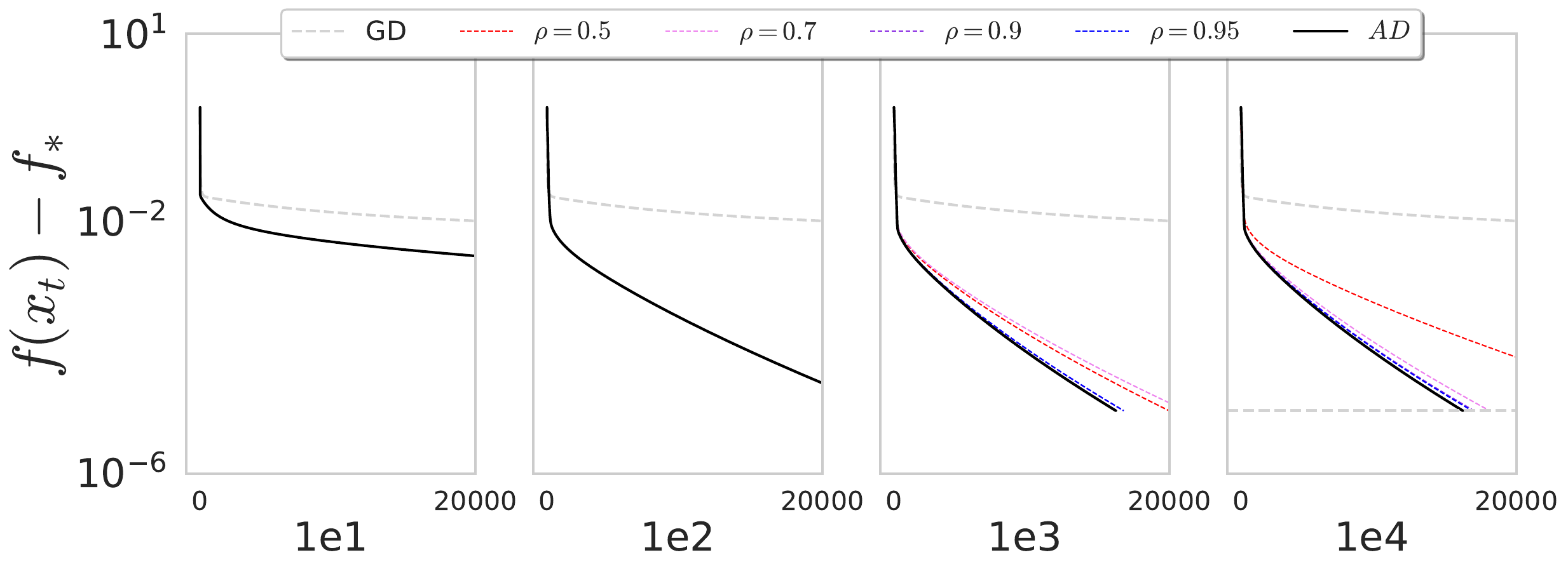}
         \caption{\textsc{protein}.}
         \label{fig:logreg_monotone_protein_gd}
    \end{subfigure}

    \begin{subfigure}{\textwidth}
         \centering
         \includegraphics[height=0.18\textheight]{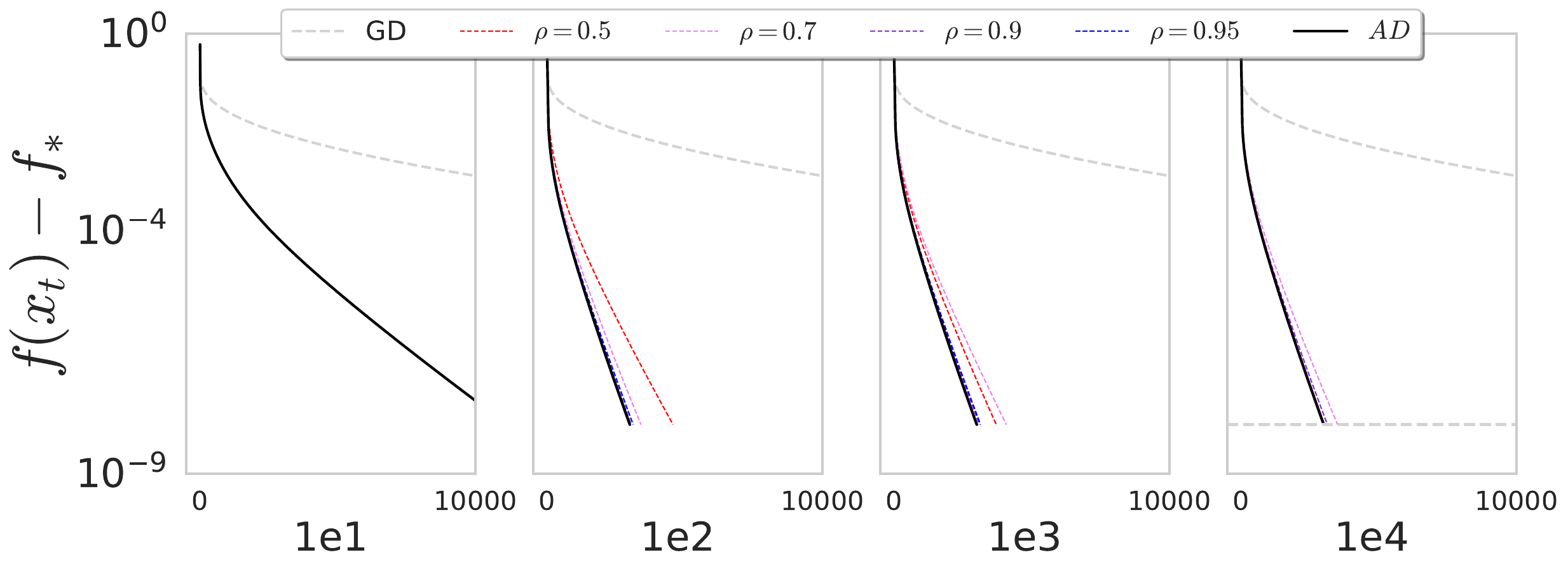}
         \caption{\textsc{web-1}.}
         \label{fig:logreg_monotone_web-1_gd}
    \end{subfigure}

    \caption{Logistic regression on four different datasets and four initial step-sizes \(\alpha_0= \{\num{1e1}, \num{1e2}, \num{1e3}, \num{1e4}\}/\bar{L}\): suboptimality gap for GD, GD with standard backtracking line search using \(\rho\in \{0.5, 0.7, 0.9, 0.95\}\) and GD with adaptive memoryless backtracking line search using \(\rho = 0.9\). The light gray horizontal dashed line shows the precision used to compute performance for each dataset.}
    \label{fig:logreg_monotone_plots_2/2_gd}
\end{figure}

\begin{figure}[H]
    \begin{subfigure}{\textwidth}
         \centering
         \includegraphics[height=0.18\textheight]{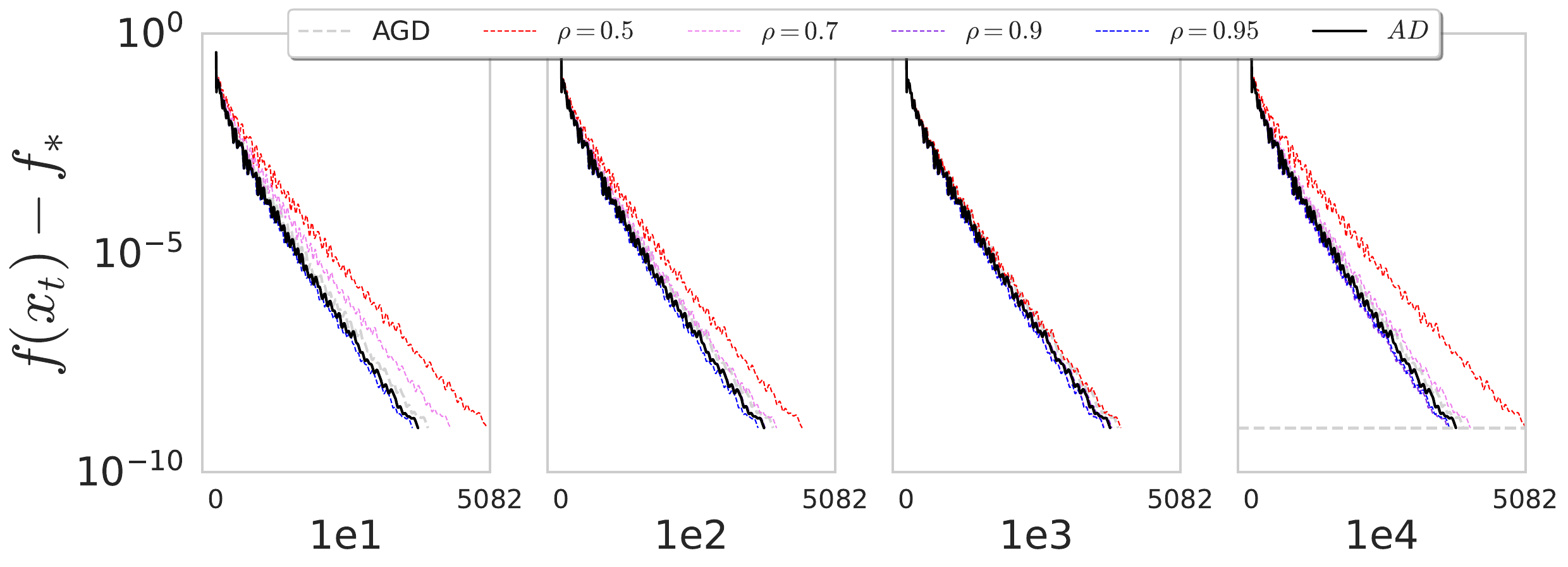}
         \caption{\textsc{adult}.}
         \label{fig:logreg_monotone_adult_agd}
    \end{subfigure}

    \begin{subfigure}{\textwidth}
         \centering
         \includegraphics[height=0.18\textheight]{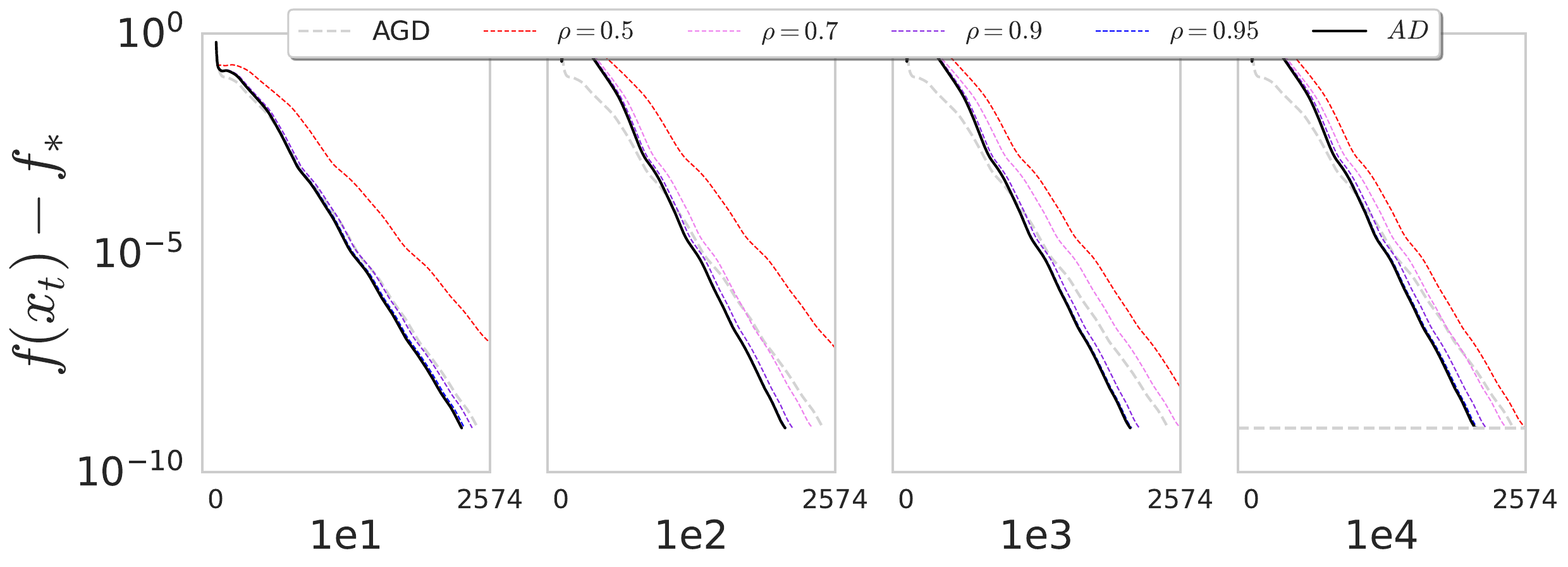}
         \caption{\textsc{gisette\_scale}.}
         \label{fig:logreg_monotone_gisette_agd}
    \end{subfigure}

    \begin{subfigure}{\textwidth}
         \centering
         \includegraphics[height=0.18\textheight]{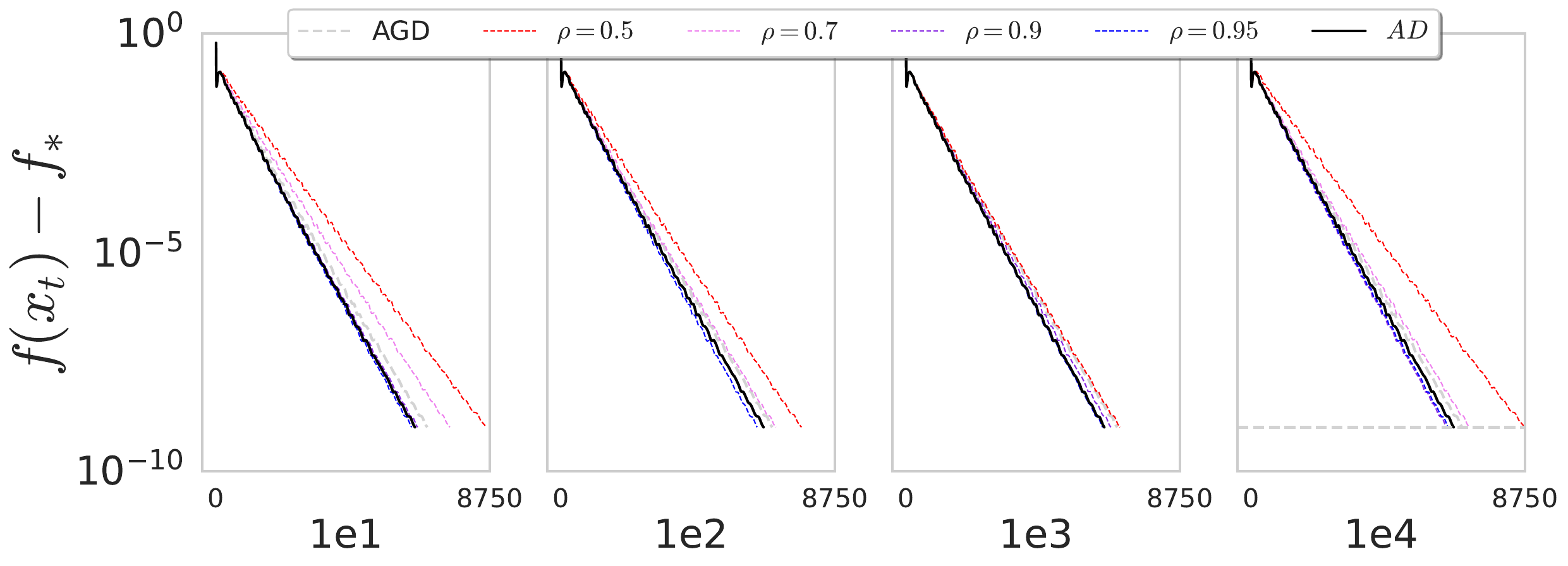}
         \caption{\textsc{MNIST}.}
         \label{fig:logreg_monotone_MNIST_agd}
    \end{subfigure}

    \begin{subfigure}{\textwidth}
         \centering
         \includegraphics[height=0.18\textheight]{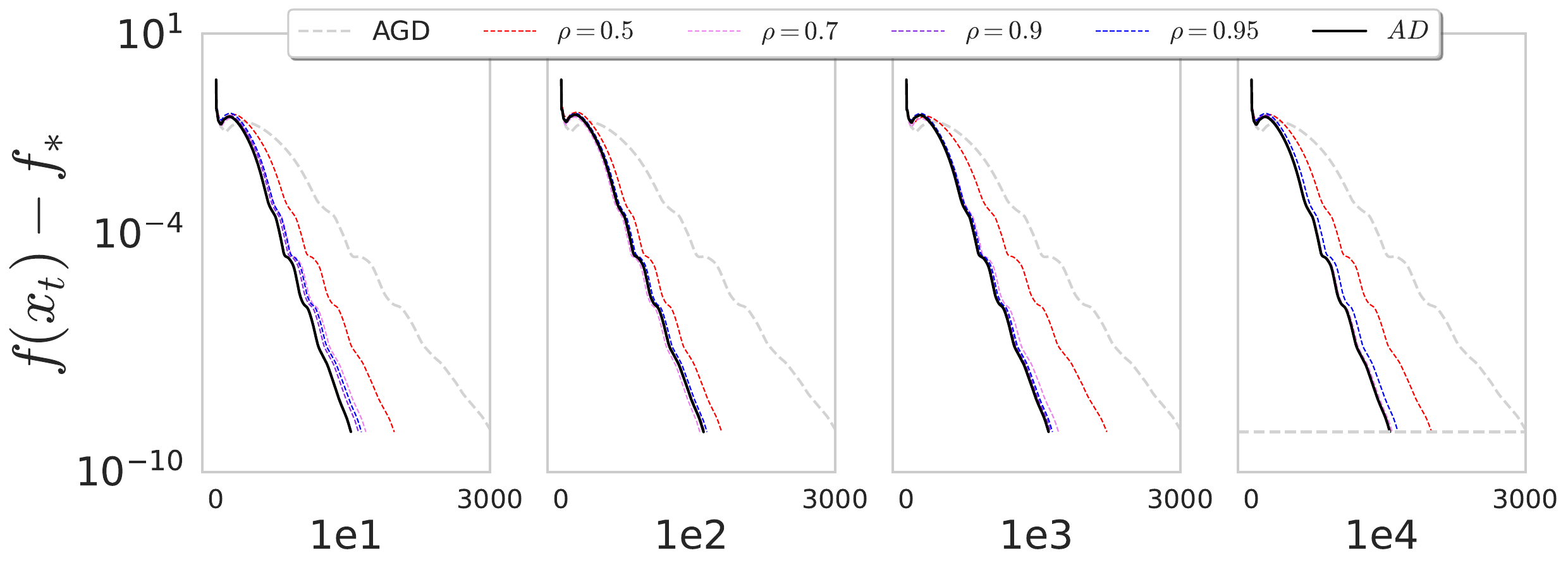}
         \caption{\textsc{mushrooms}.}
         \label{fig:logreg_monotone_mushrooms_agd}
    \end{subfigure}

    \caption{Logistic regression on four different datasets and four initial step-sizes \(\alpha_0= \{\num{1e1}, \num{1e2}, \num{1e3}, \num{1e4}\}/\bar{L}\): suboptimality gap for AGD, AGD with standard backtracking line search using \(\rho\in \{0.5, 0.7, 0.9, 0.95\}\) and AGD with adaptive memoryless backtracking line search using \(\rho = 0.9\). The light gray horizontal dashed line shows the precision used to compute performance for each dataset.}
    \label{fig:logreg_monotone_plots_1/2_agd}
\end{figure}

\begin{figure}[H]
    \begin{subfigure}{\textwidth}
         \centering
         \includegraphics[height=0.18\textheight]{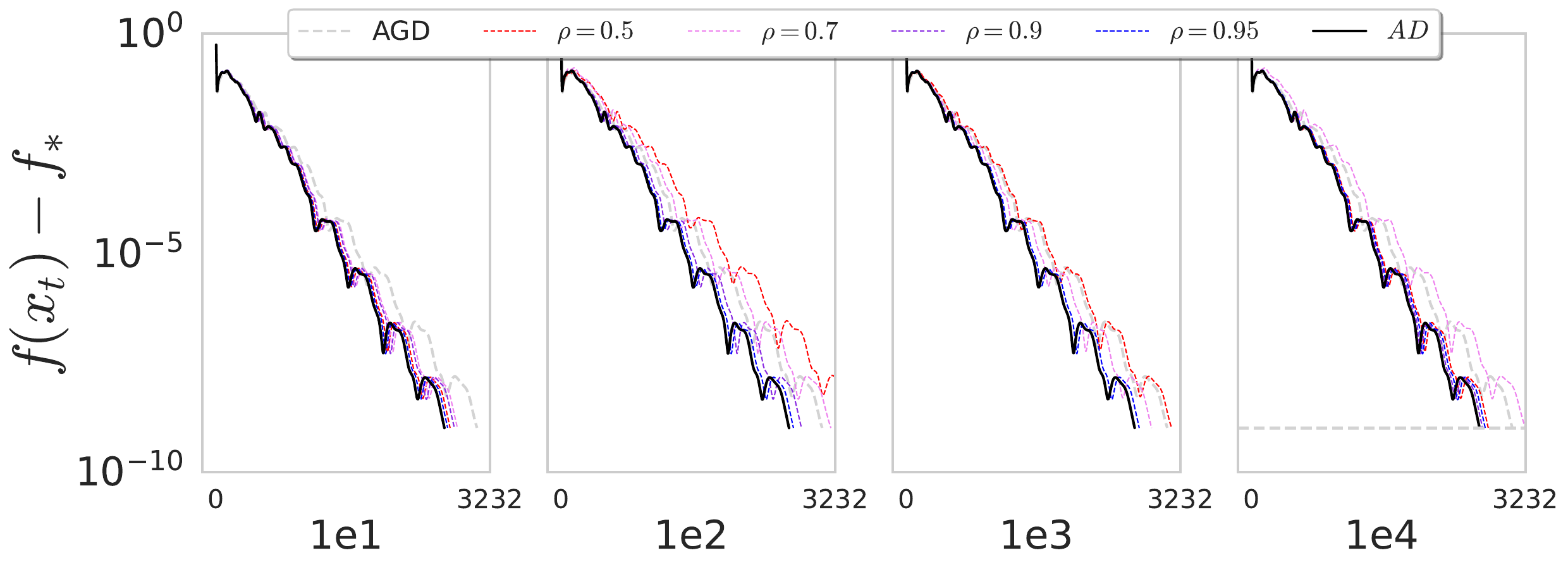}
         \caption{\textsc{phishing}.}
         \label{fig:logreg_monotone_adult_phising_agd}
    \end{subfigure}

    \begin{subfigure}{\textwidth}
         \centering
         \includegraphics[height=0.18\textheight]{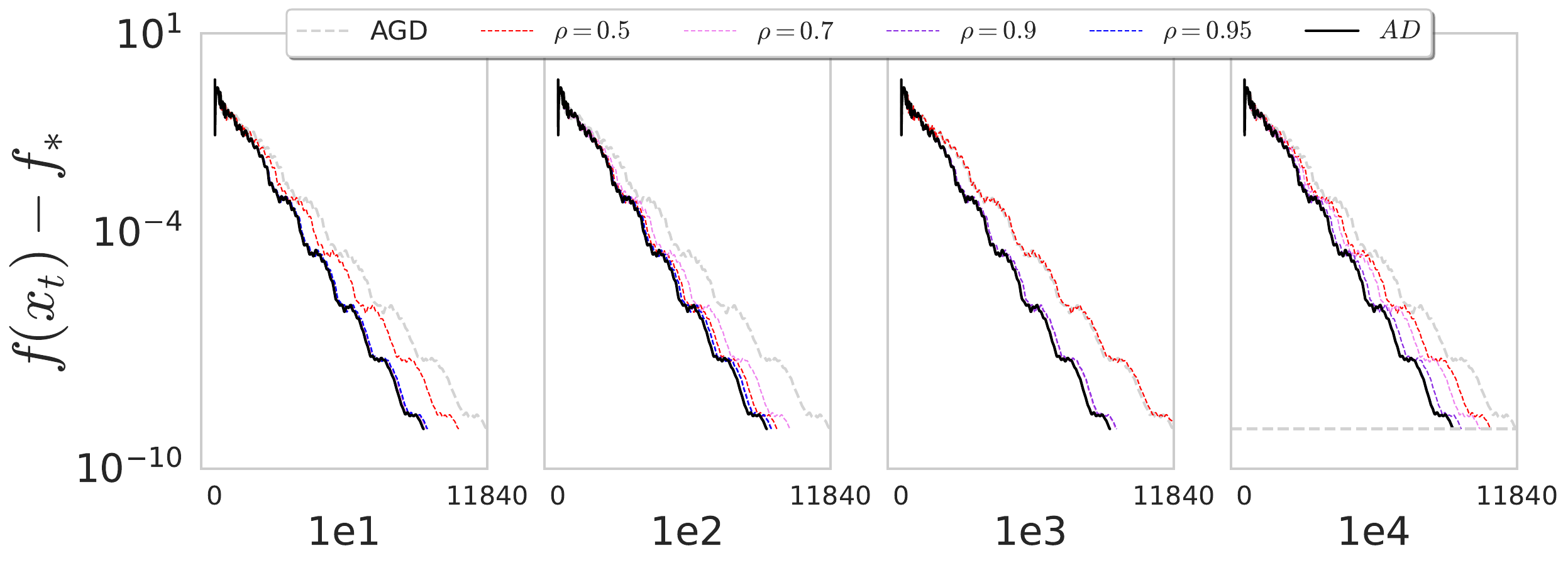}
         \caption{\textsc{protein}.}
         \label{fig:logreg_monotone_protein_agd}
    \end{subfigure}

    \begin{subfigure}{\textwidth}
         \centering
         \includegraphics[height=0.18\textheight]{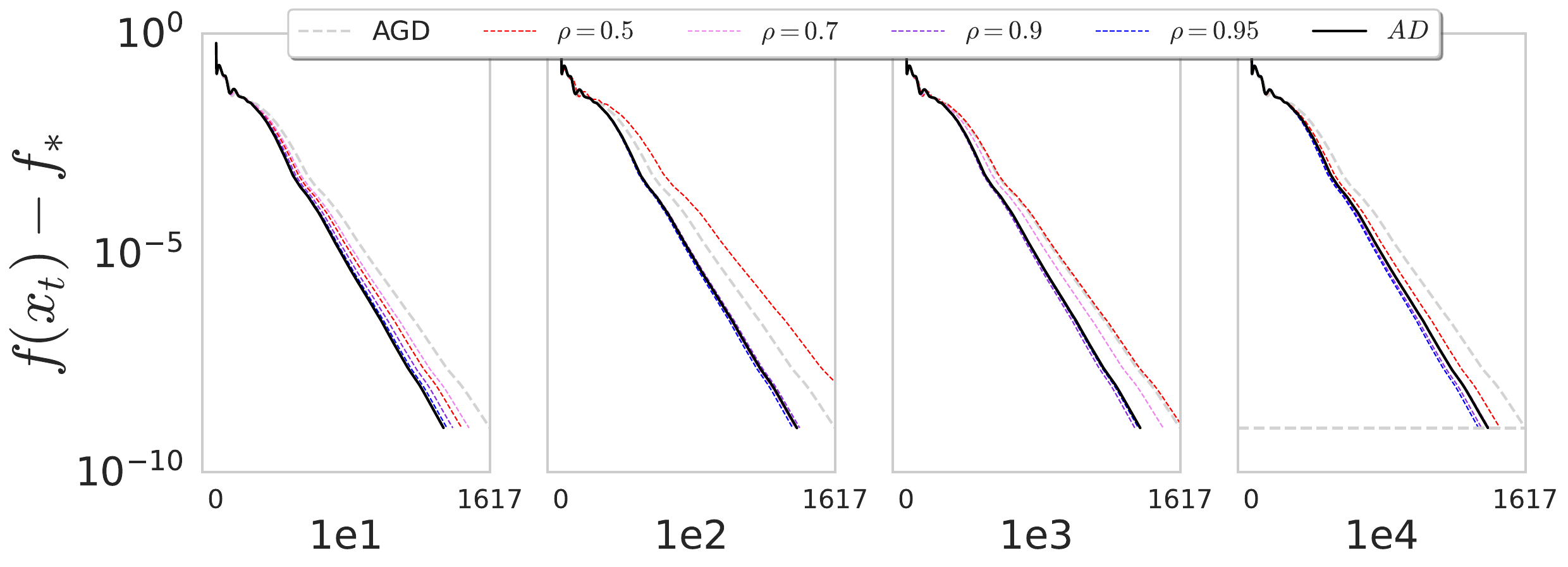}
         \caption{\textsc{web-1}.}
         \label{fig:logreg_monotone_web-1_a}
    \end{subfigure}

    \caption{Logistic regression on four different datasets and four initial step-sizes \(\alpha_0= \{\num{1e1}, \num{1e2}, \num{1e3}, \num{1e4}\}/\bar{L}\): suboptimality gap for AGD, AGD with standard backtracking line search using \(\rho\in \{0.5, 0.7, 0.9, 0.95\}\) and AGD with adaptive memoryless backtracking line search using \(\rho = 0.9\). The light gray horizontal dashed line shows the precision used to compute performance for each dataset.}
    \label{fig:logreg_monotone_plots_2/2_agd}
\end{figure}

\begin{figure}[H]
    \begin{subfigure}{\textwidth}
         \centering
         \includegraphics[height=0.18\textheight]{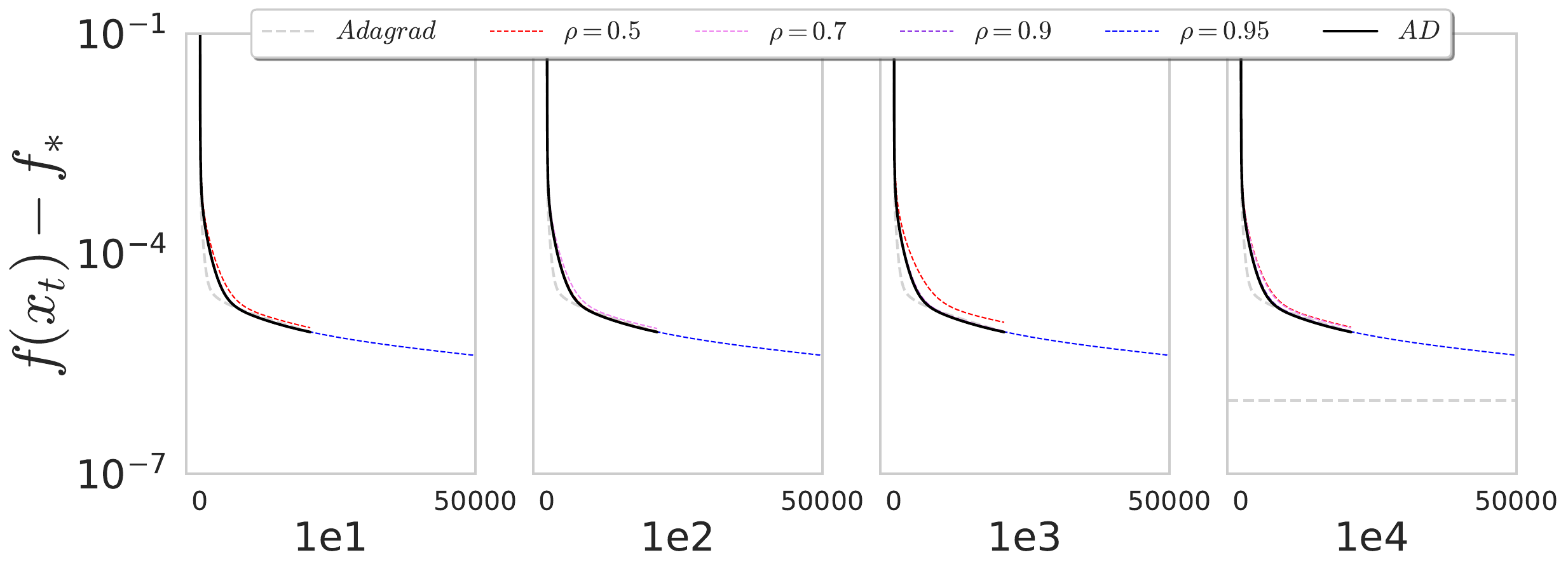}
         \caption{\textsc{adult}.}
         \label{fig:logreg_monotone_adult_adagrad}
    \end{subfigure}

    \begin{subfigure}{\textwidth}
         \centering
         \includegraphics[height=0.18\textheight]{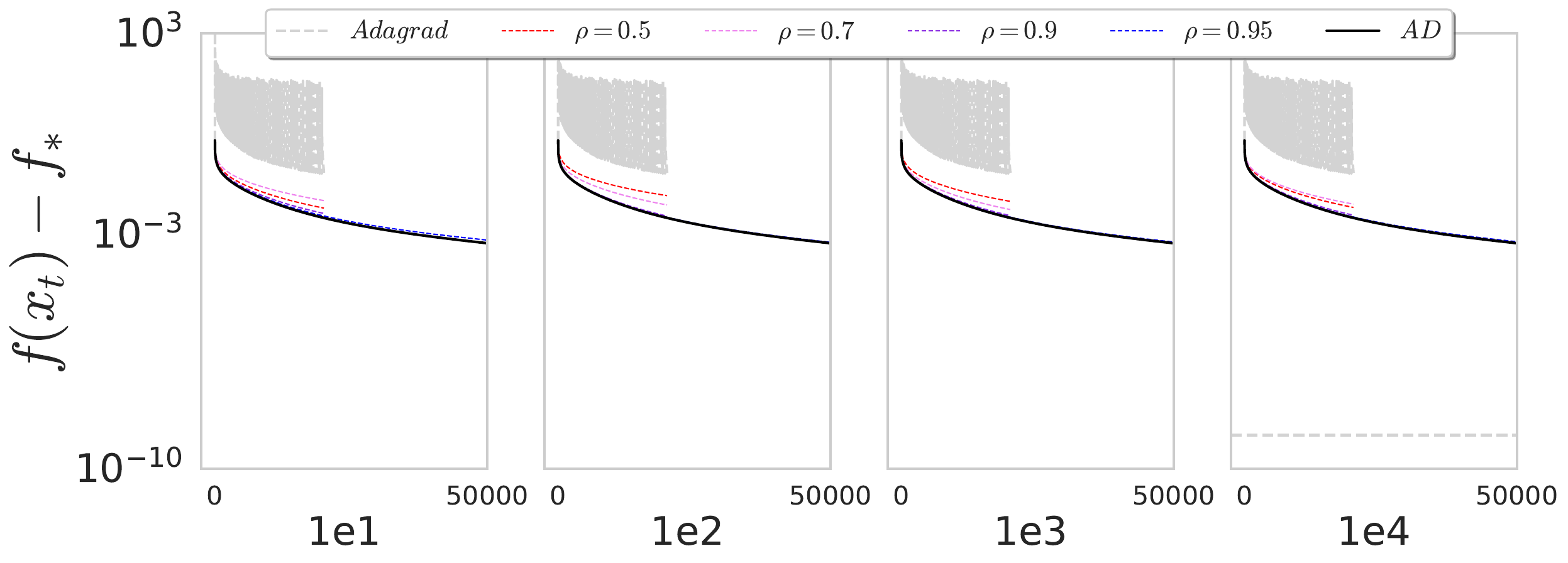}
         \caption{\textsc{gisette\_scale}.}
         \label{fig:logreg_monotone_gisette_adagrad}
    \end{subfigure}

    \begin{subfigure}{\textwidth}
         \centering
         \includegraphics[height=0.18\textheight]{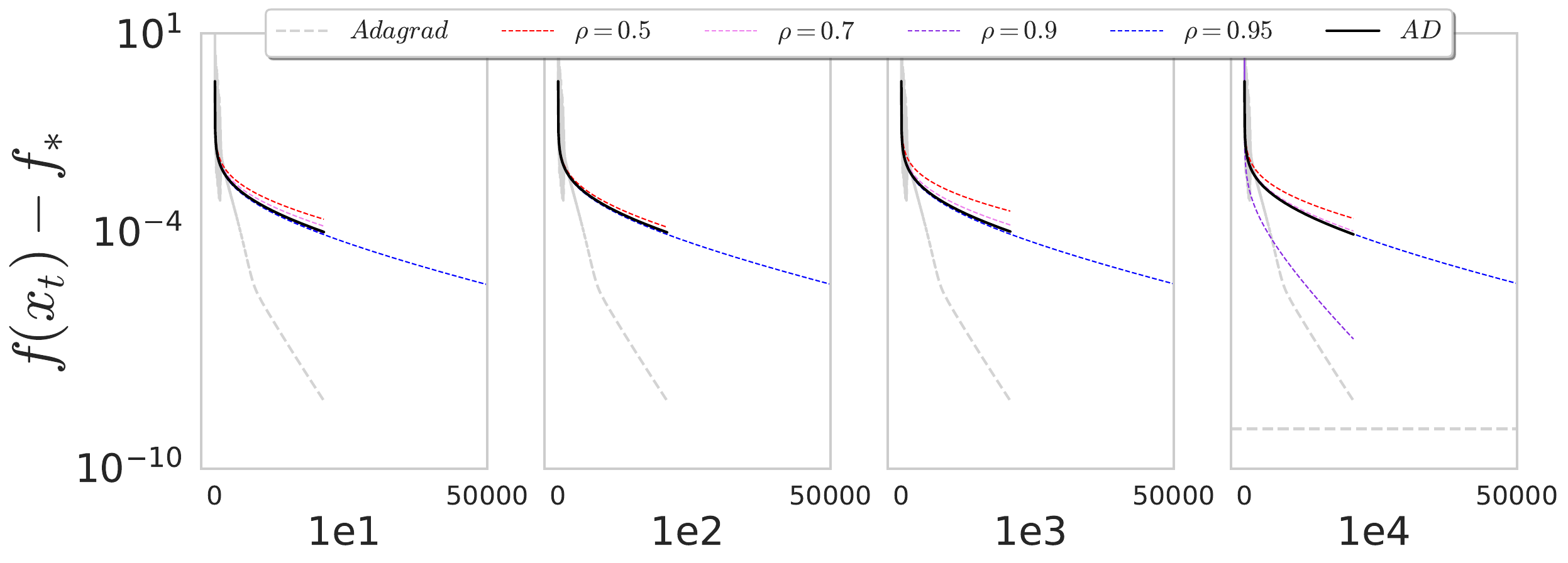}
         \caption{\textsc{MNIST}.}
         \label{fig:logreg_monotone_MNIST_adagrad}
    \end{subfigure}

    \begin{subfigure}{\textwidth}
         \centering
         \includegraphics[height=0.18\textheight]{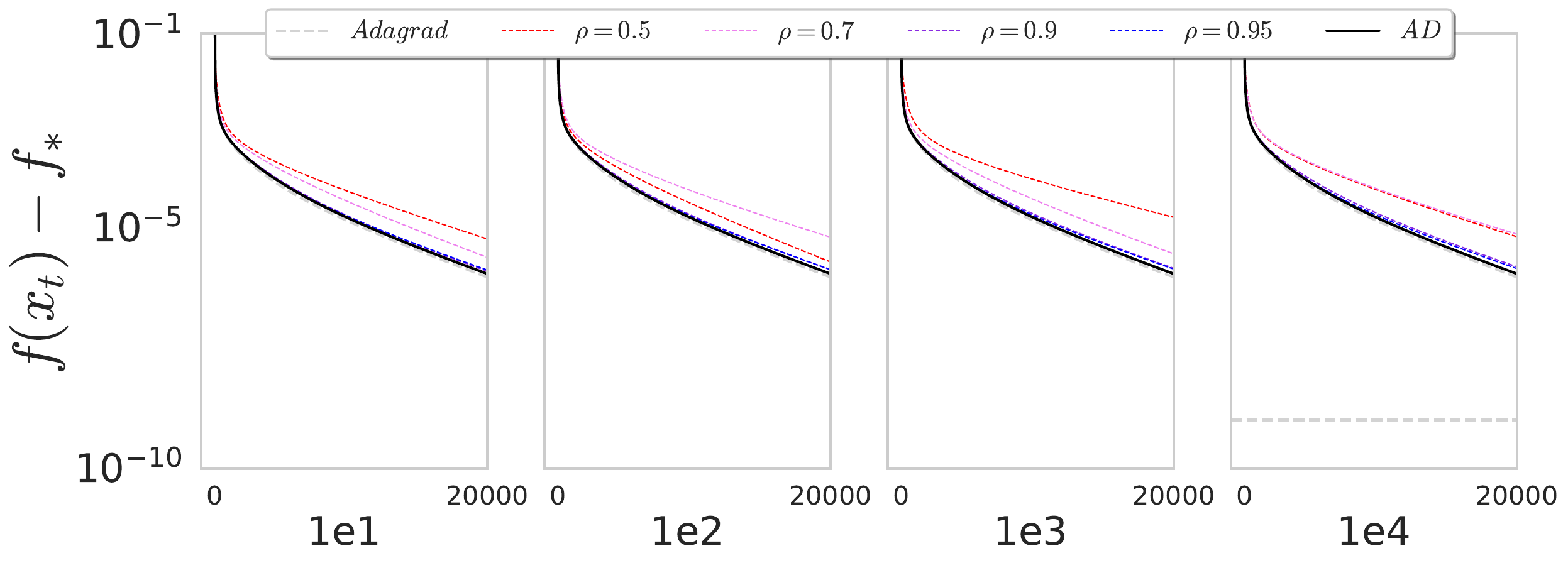}
         \caption{\textsc{mushrooms}.}
         \label{fig:logreg_monotone_mushrooms_adagrad}
    \end{subfigure}

    \caption{Logistic regression on four different datasets and four initial step-sizes \(\alpha_0= \{\num{1e1}, \num{1e2}, \num{1e3}, \num{1e4}\}/\bar{L}\): suboptimality gap for Adagrad, Adagrad with standard backtracking line search using \(\rho\in \{0.5, 0.7, 0.9, 0.95\}\) and Adagrad with adaptive memoryless backtracking line search using \(\rho = 0.9\). The light gray horizontal dashed line shows the precision used to compute performance for each dataset.}
    \label{fig:logreg_monotone_plots_1/2_adagrad}
\end{figure}

\begin{figure}[H]
    \begin{subfigure}{\textwidth}
         \centering
         \includegraphics[height=0.18\textheight]{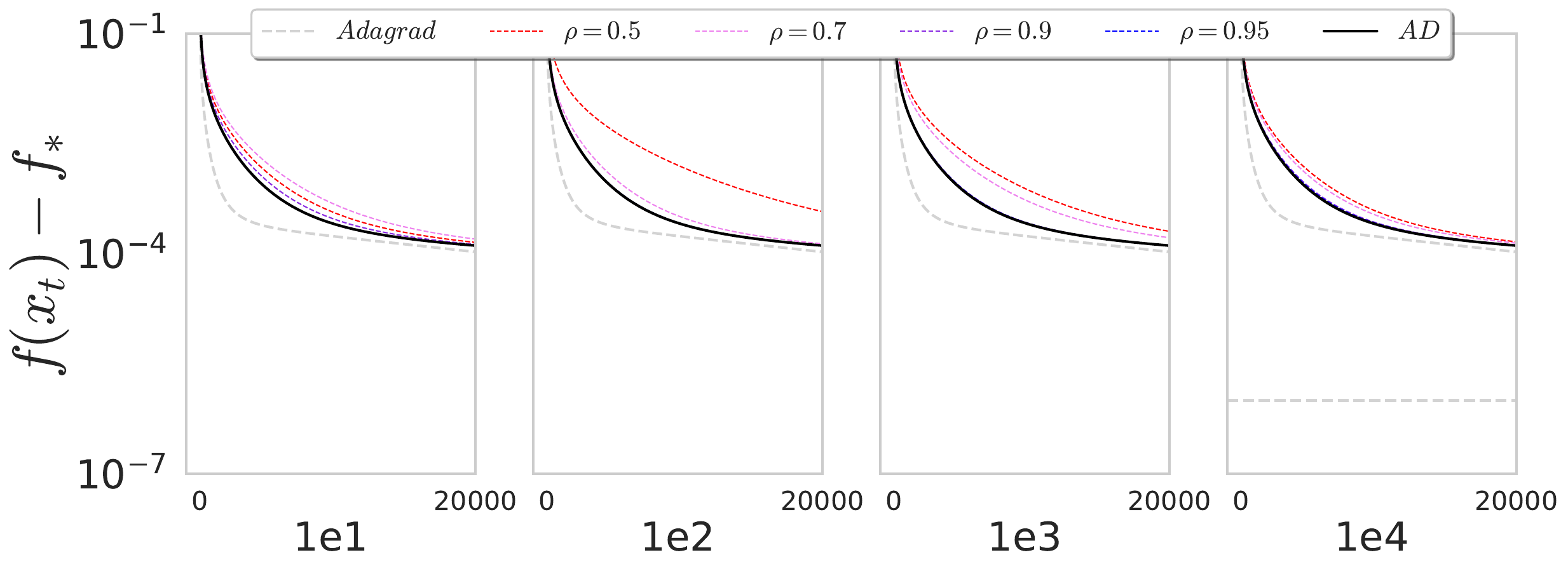}
         \caption{\textsc{phishing}.}
         \label{fig:logreg_monotone_adult_phising_adagrad}
    \end{subfigure}

    \begin{subfigure}{\textwidth}
         \centering
         \includegraphics[height=0.18\textheight]{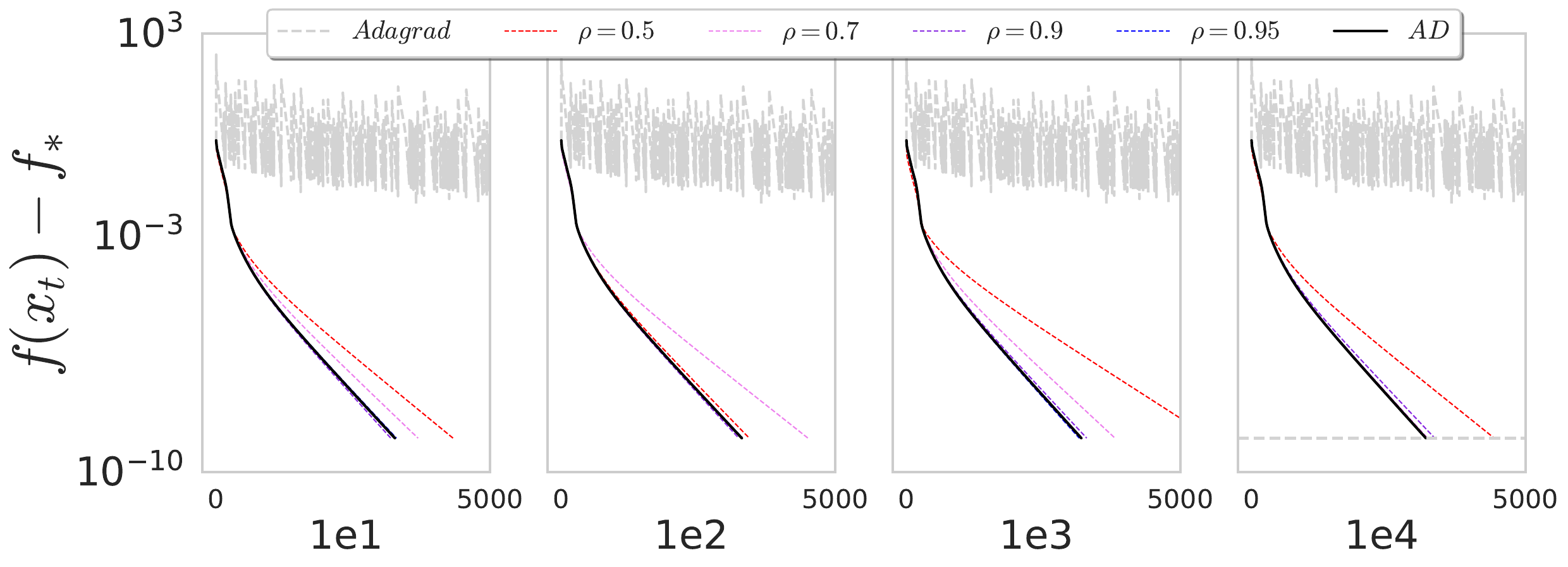}
         \caption{\textsc{protein}.}
         \label{fig:logreg_monotone_protein_adagrad}
    \end{subfigure}

    \begin{subfigure}{\textwidth}
         \centering
         \includegraphics[height=0.18\textheight]{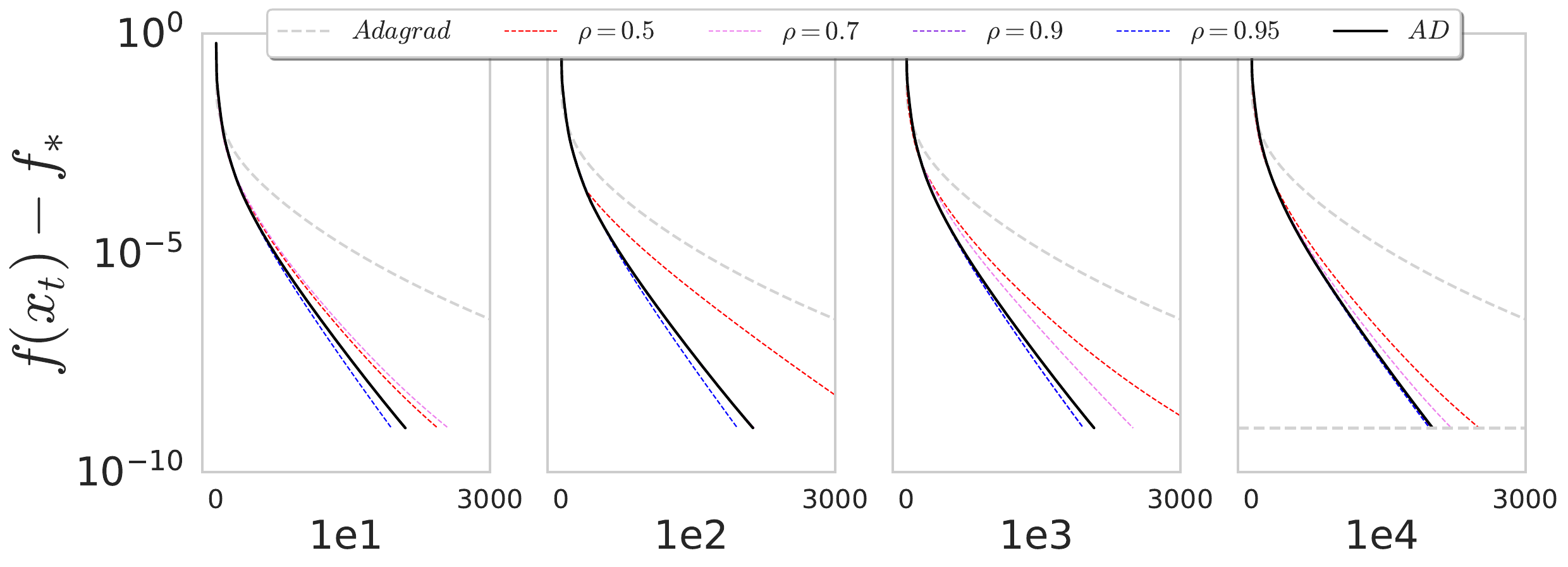}
         \caption{\textsc{web-1}.}
         \label{fig:logreg_monotone_adagrad_web-1_a_adagrad}
    \end{subfigure}

    \caption{Logistic regression on four different datasets and four initial step-sizes \(\alpha_0= \{\num{1e1}, \num{1e2}, \num{1e3}, \num{1e4}\}/\bar{L}\): suboptimality gap for Adagrad, Adagrad with standard backtracking line search using \(\rho\in \{0.5, 0.7, 0.9, 0.95\}\) and Adagrad with adaptive memoryless backtracking line search using \(\rho = 0.9\). The light gray horizontal dashed line shows the precision used to compute performance for each dataset.}
    \label{fig:logreg_monotone_plots_2/2_adagrad}
\end{figure}

\newpage
\subsection{Memoryless Backtracking Line Search}
\label{sec:app-logreg-memoryless}

\begin{figure}[H]
     \centering
     \includegraphics[height=0.18\textheight]{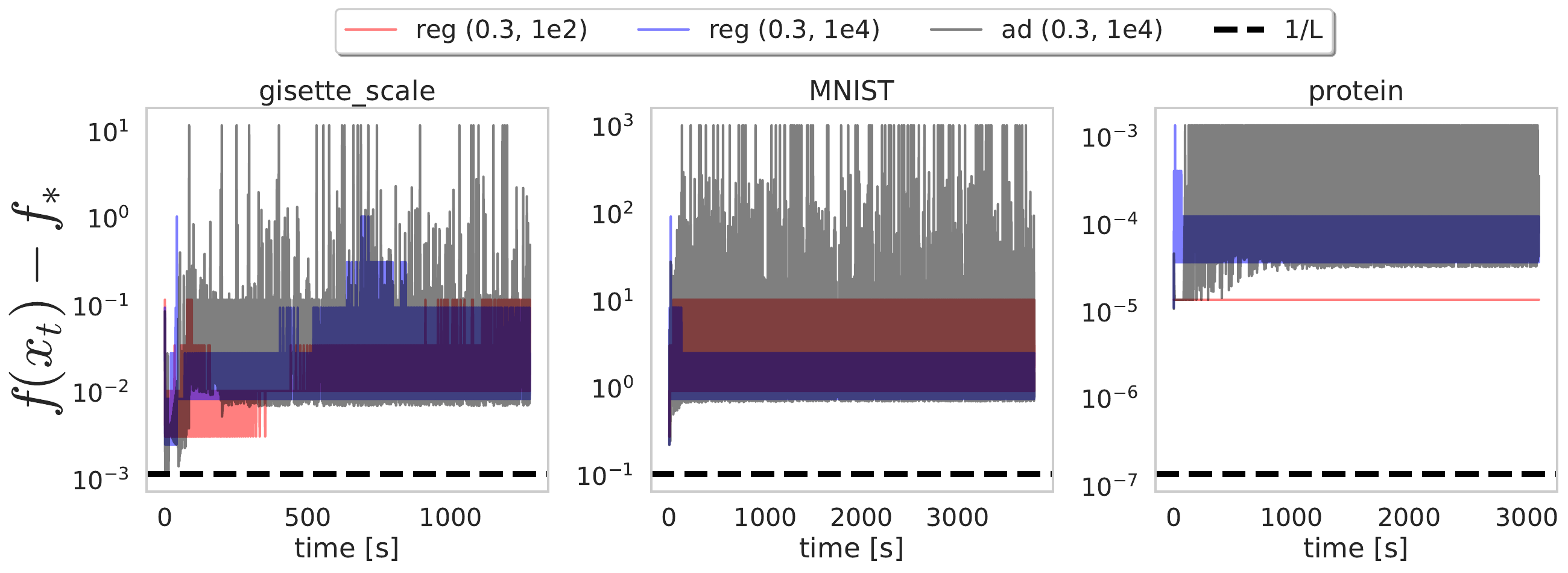}
     \caption{step-sizes for experiments shown in \cref{fig:logreg_time_comparison_gd}. Baseline: GD with constant \(\alpha_{k}=1/\bar{L}\); reg (\(\rho, \beta\)) and ad (\(\rho, \beta\)): GD with, respectively, regular and adaptive memoryless BLS parameterized by \(\rho\) and \(\alpha_{0}=\beta/\bar{L}\). The thick black dashed line denotes \(1/\bar{L}\), where \(\bar{L}=\lambda_{\max}(A^{\top}A)/4n\).}
     \label{fig:logreg_gd_sample_step_sizes}
\end{figure}

\begin{figure}[H]
     \centering
     \includegraphics[height=0.18\textheight]{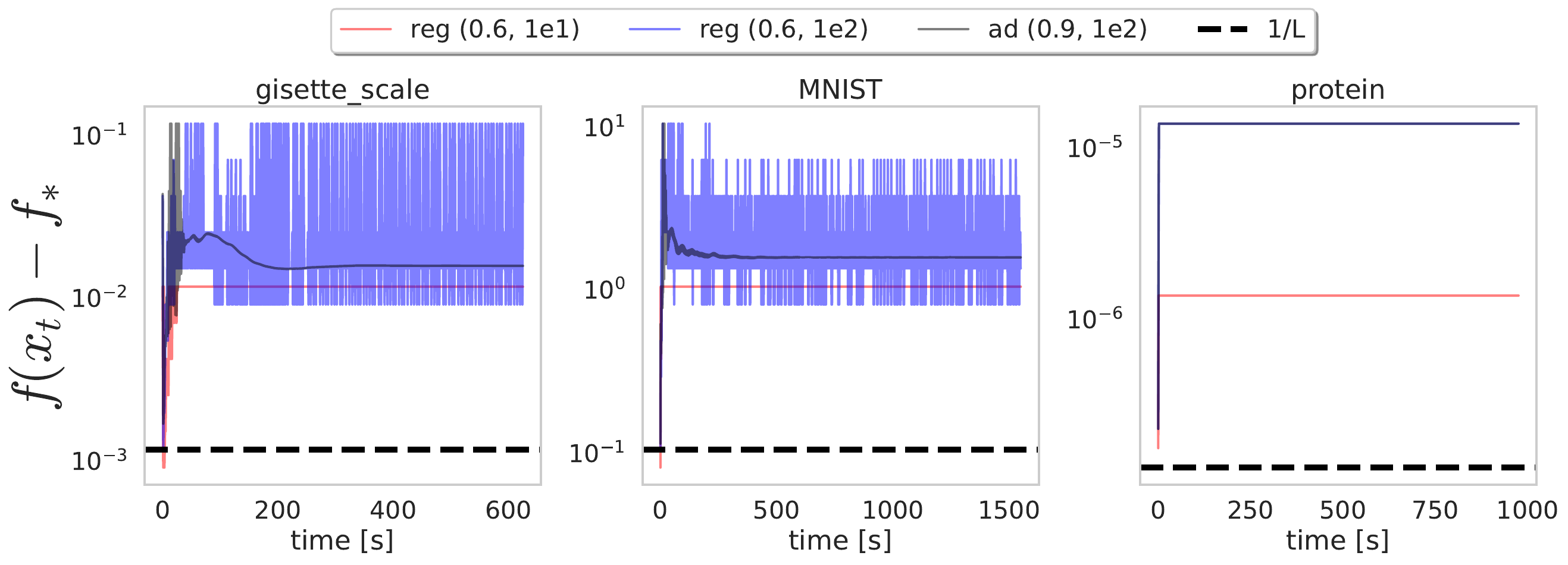}
     \caption{step-sizes for experiments shown in \cref{fig:logreg_time_comparison_agd}. Baseline: AGD with constant \(\alpha_{k}=1/\bar{L}\); reg (\(\rho, \beta\)) and ad (\(\rho, \beta\)): AGD with, respectively, regular and adaptive memoryless BLS parameterized by \(\rho\) and \(\alpha_{0}=\beta/\bar{L}\). The thick black dashed line denotes \(1/\bar{L}\), where \(\bar{L}=\lambda_{\max}(A^{\top}A)/4n\).}
     \label{fig:logreg_agd_sample_step_sizes}
\end{figure}

\begin{figure}[H]
    \begin{subfigure}{\textwidth}
         \centering
         \includegraphics[height=0.18\textheight]{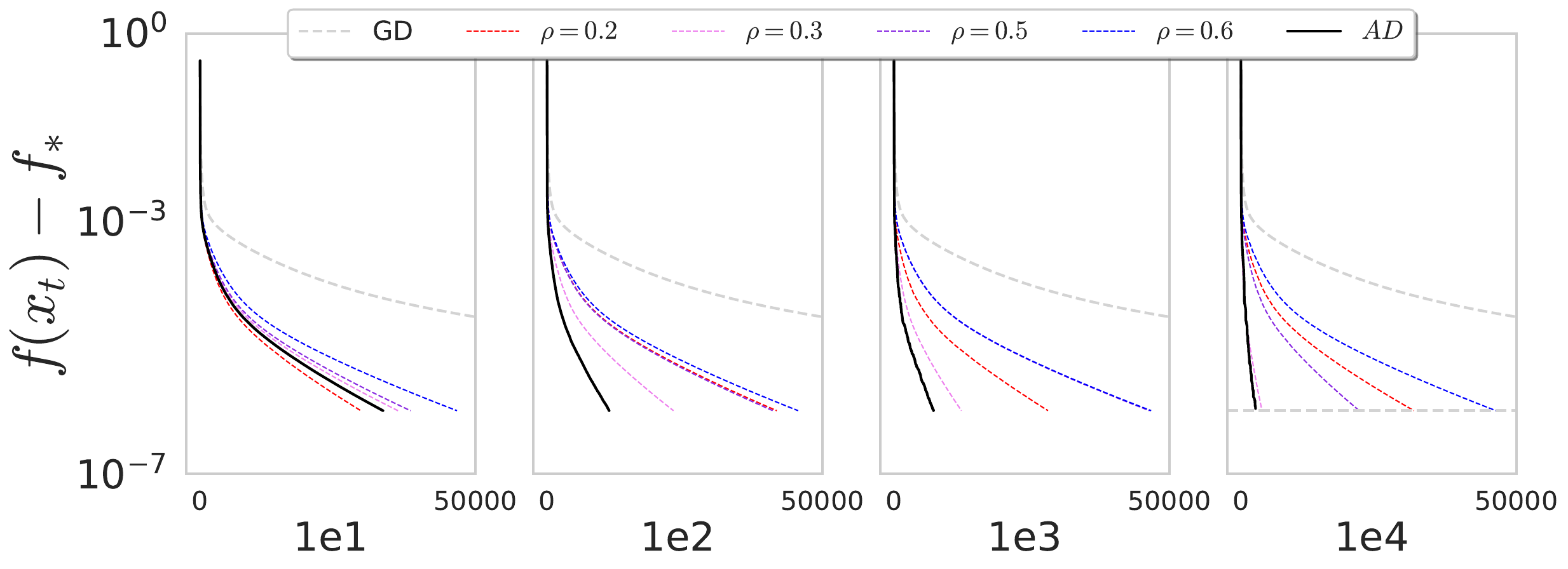}
         \caption{\textsc{adult}.}
         \label{fig:logreg_adult_gd}
    \end{subfigure}

    \caption{Logistic regression on four different datasets and four initial step-sizes \(\alpha_0= \{\num{1e1}, \num{1e2}, \num{1e3}, \num{1e4}\}/\bar{L}\): suboptimality gap for GD, GD with standard memoryless backtracking line search using \(\rho\in \{0.2, 0.3, 0.5, 0.6\}\) and GD with adaptive memoryless backtracking line search using \(\rho = 0.3\). The light gray horizontal dashed line shows the precision used to compute performance for each dataset.}
    \label{fig:logreg_plots_1/3_gd}
\end{figure}

\begin{figure}[H]
    \begin{subfigure}{\textwidth}
         \centering
         \includegraphics[height=0.18\textheight]{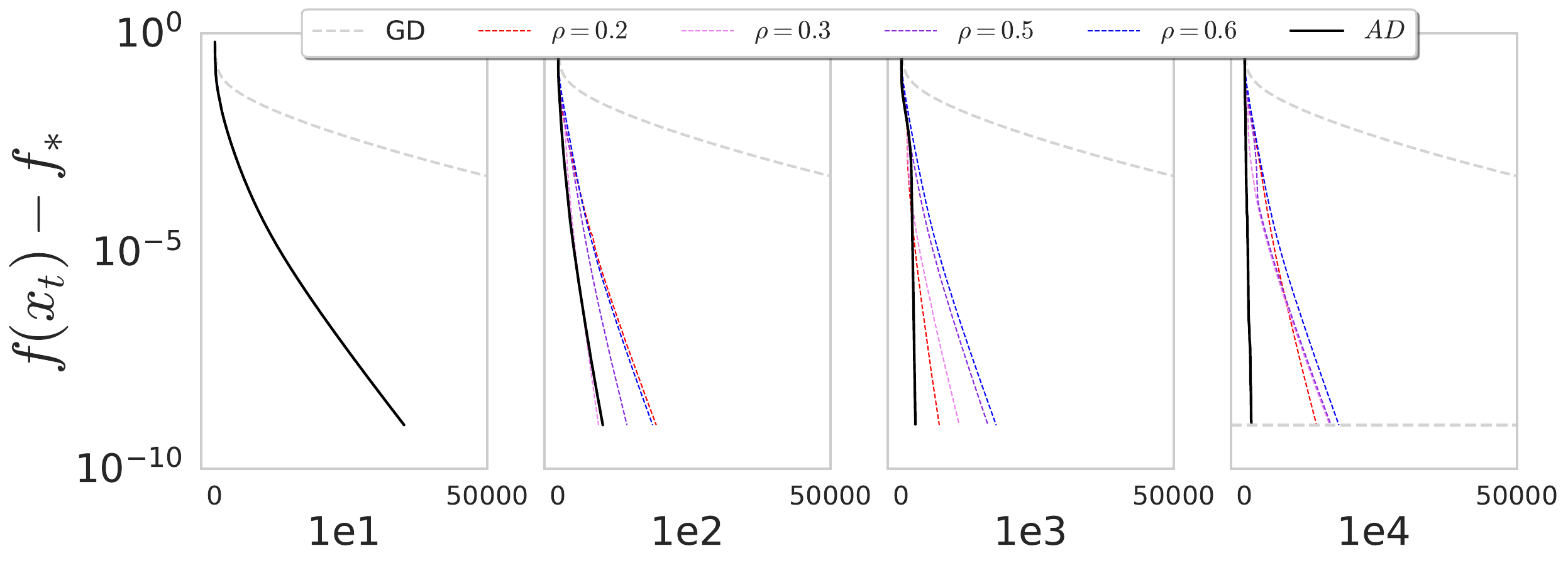}
         \caption{\textsc{gisette\_scale}.}
         \label{fig:logreg_gisette_gd}
    \end{subfigure}

    \begin{subfigure}{\textwidth}
         \centering
         \includegraphics[height=0.18\textheight]{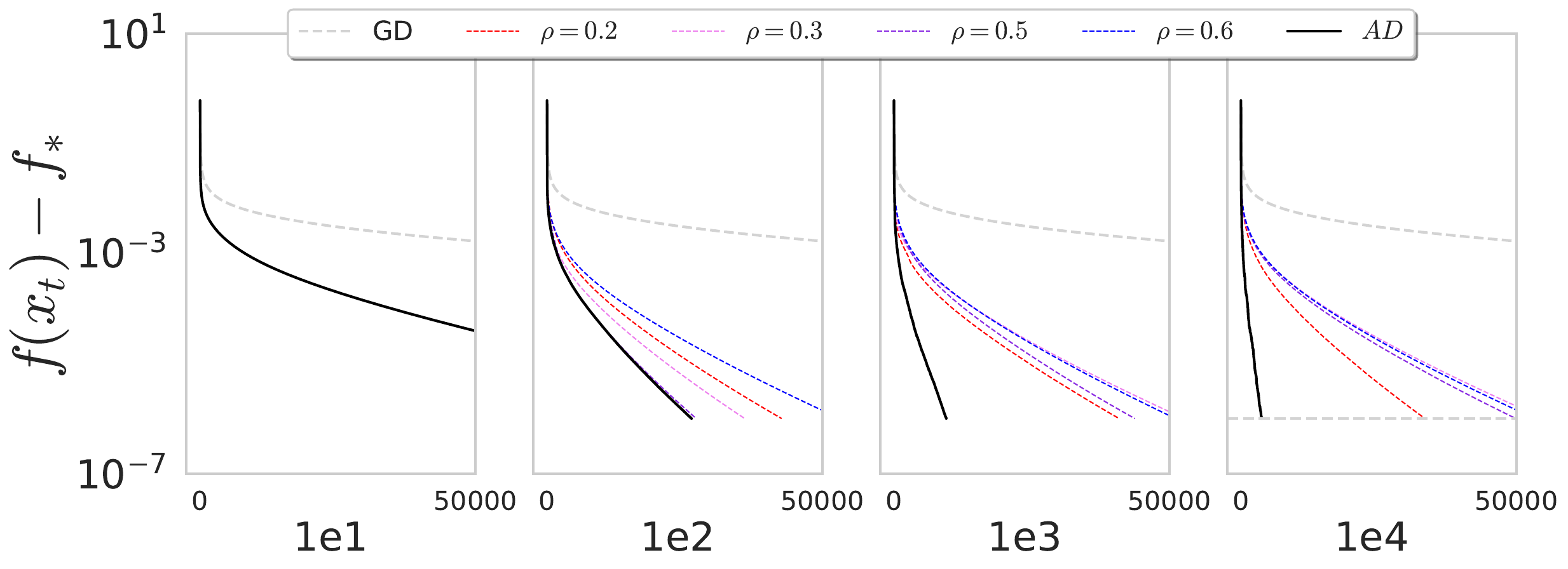}
         \caption{\textsc{MNIST}.}
         \label{fig:logreg_MNIST_gd}
    \end{subfigure}

    \begin{subfigure}{\textwidth}
         \centering
         \includegraphics[height=0.18\textheight]{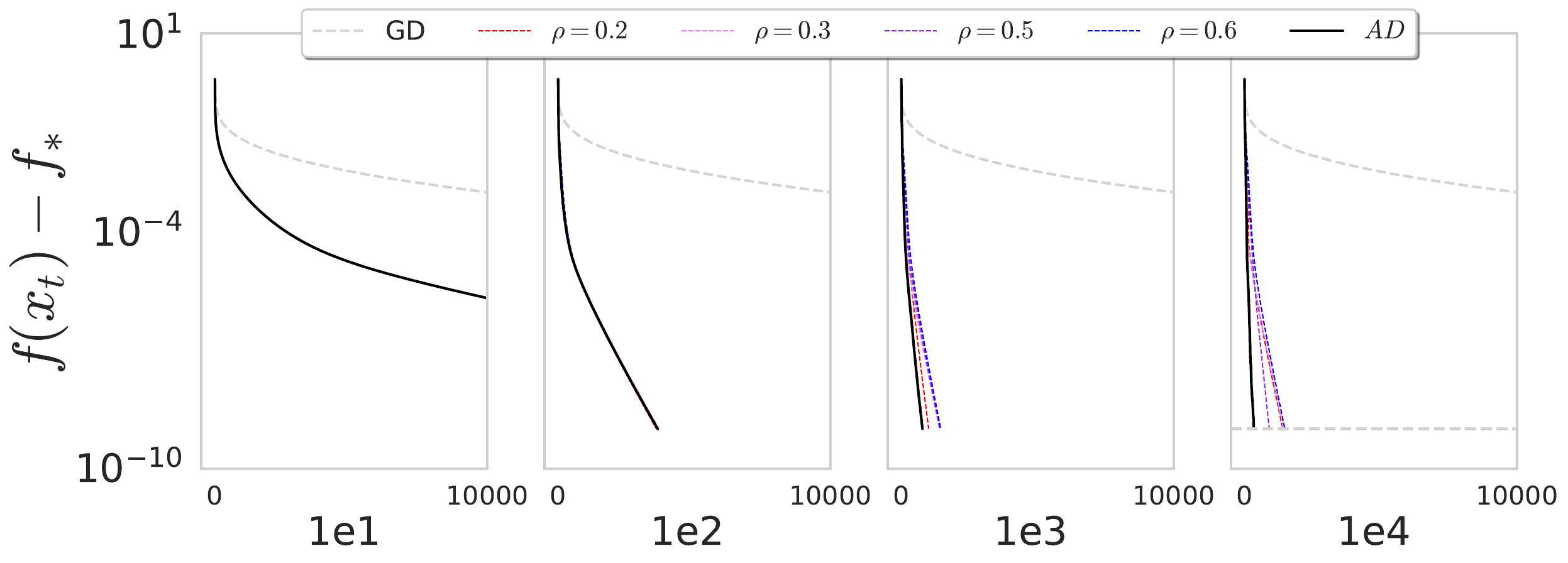}
         \caption{\textsc{mushrooms}.}
         \label{fig:logreg_mushrooms_gd}
    \end{subfigure}

    \caption{Logistic regression on four different datasets and four initial step-sizes \(\alpha_0= \{\num{1e1}, \num{1e2}, \num{1e3}, \num{1e4}\}/\bar{L}\): suboptimality gap for GD, GD with standard memoryless backtracking line search using \(\rho\in \{0.2, 0.3, 0.5, 0.6\}\) and GD with adaptive memoryless backtracking line search using \(\rho = 0.3\). The light gray horizontal dashed line shows the precision used to compute performance for each dataset.}
    \label{fig:logreg_plots_2/3_gd}
\end{figure}

\begin{figure}[H]
    \begin{subfigure}{\textwidth}
         \centering
         \includegraphics[height=0.18\textheight]{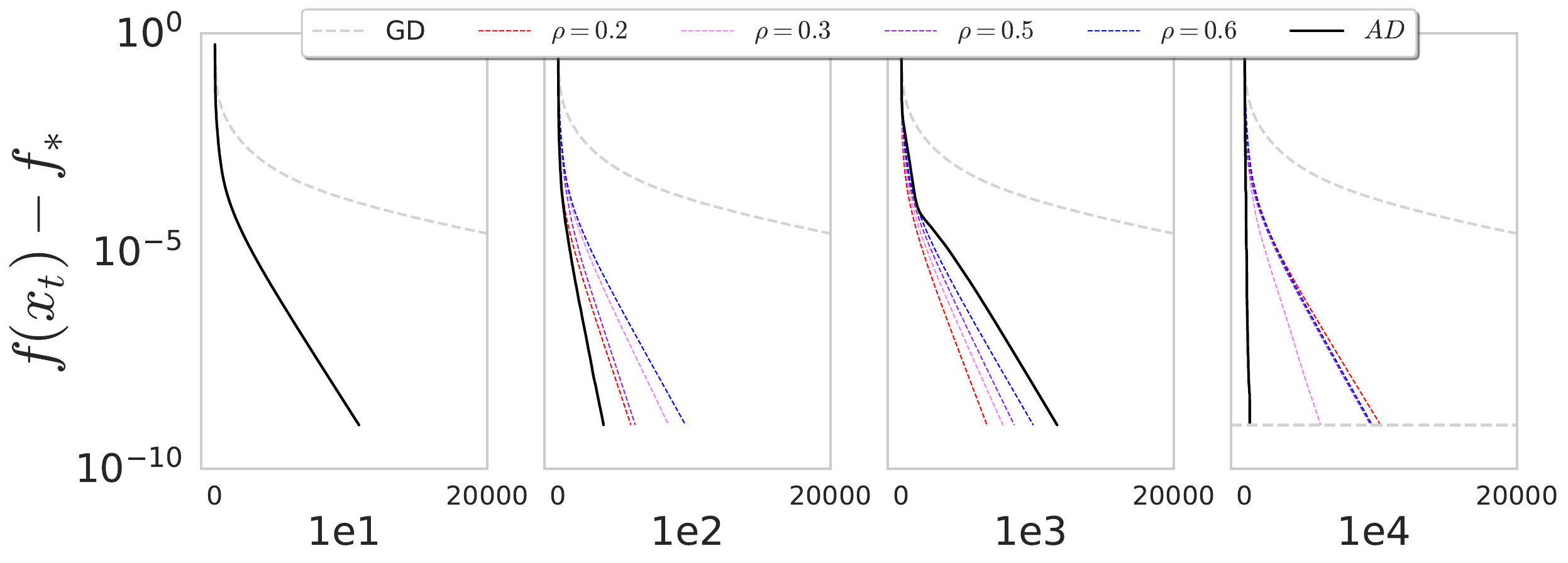}
         \caption{\textsc{phishing}.}
         \label{fig:logreg_adult_phising_gd}
    \end{subfigure}

    \begin{subfigure}{\textwidth}
         \centering
         \includegraphics[height=0.18\textheight]{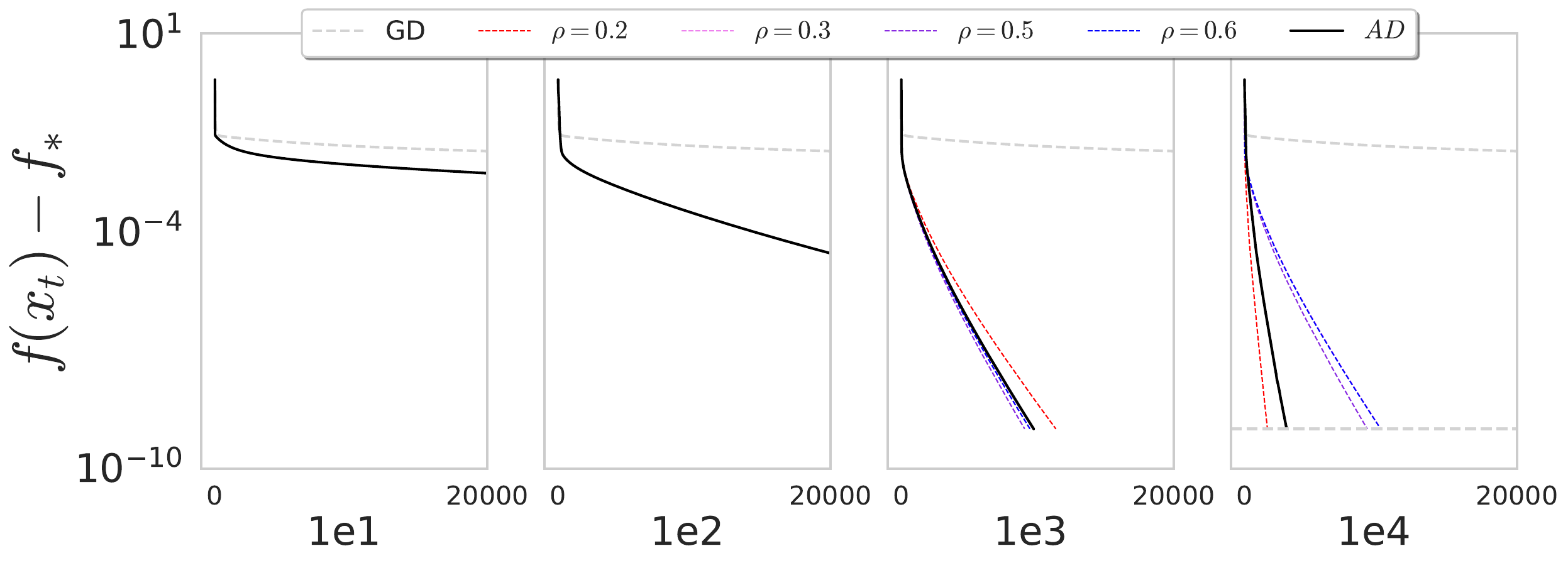}
         \caption{\textsc{protein}.}
         \label{fig:logreg_protein_gd}
    \end{subfigure}

    \begin{subfigure}{\textwidth}
         \centering
         \includegraphics[height=0.18\textheight]{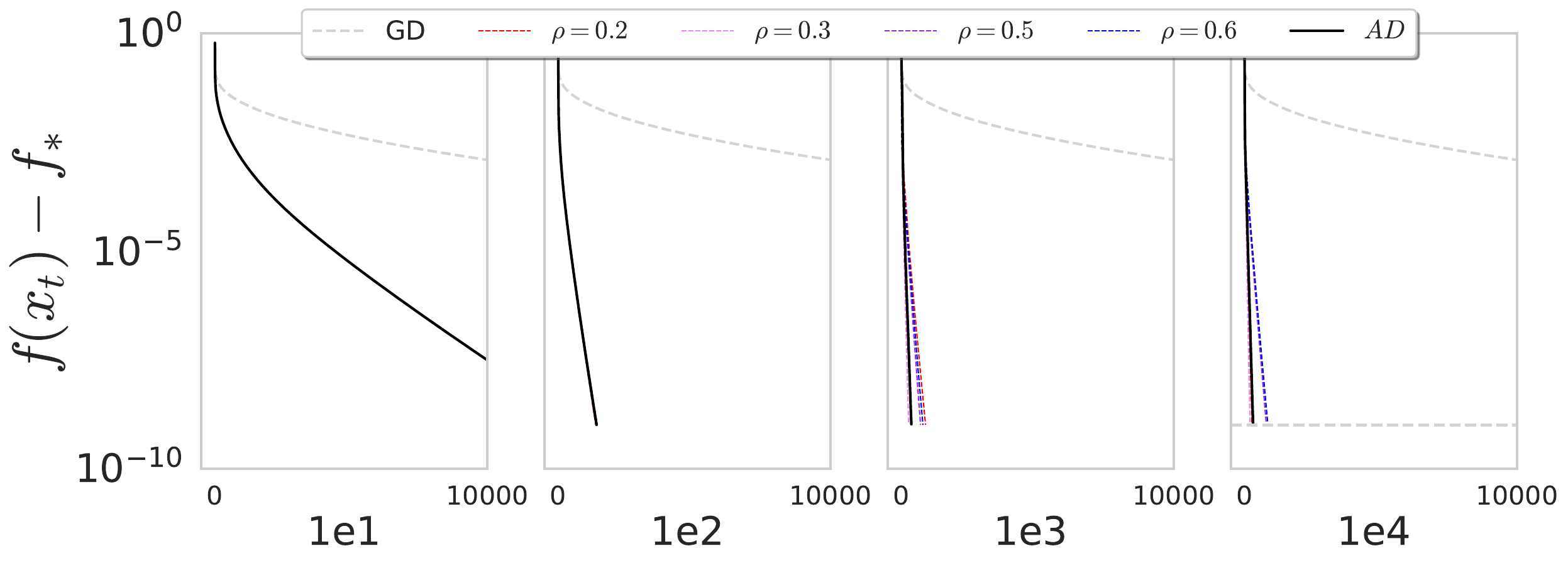}
         \caption{\textsc{web-1}.}
         \label{fig:logreg_web-1_gd}
    \end{subfigure}

    \caption{Logistic regression on four different datasets and four initial step-sizes \(\alpha_0= \{\num{1e1}, \num{1e2}, \num{1e3}, \num{1e4}\}/\bar{L}\): suboptimality gap for GD, GD with standard memoryless backtracking line search using \(\rho\in \{0.2, 0.3, 0.5, 0.6\}\) and GD with adaptive memoryless backtracking line search using \(\rho = 0.3\). The light gray horizontal dashed line shows the precision used to compute performance for each dataset.}
    \label{fig:logreg_plots_3/3_gd}
\end{figure}

\begin{figure}[H]
    \begin{subfigure}{\textwidth}
         \centering
         \includegraphics[height=0.18\textheight]{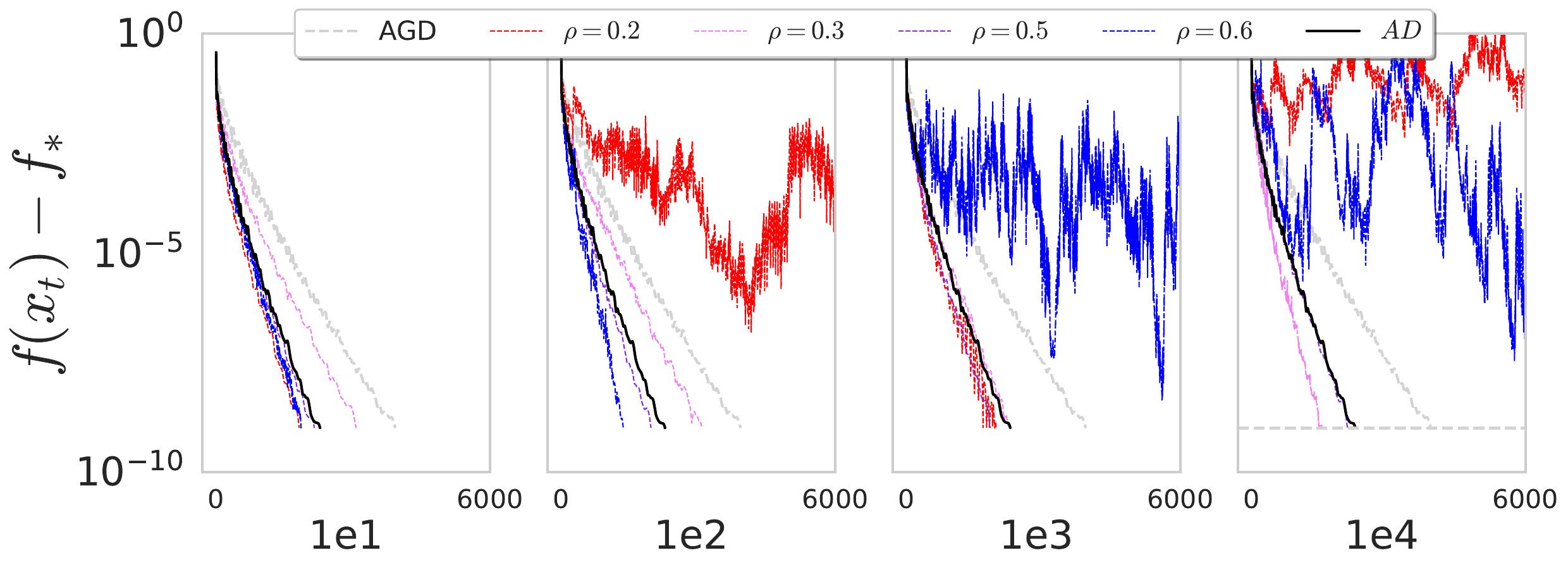}
         \caption{\textsc{adult}.}
         \label{fig:logreg_adult_agd}
    \end{subfigure}

    \begin{subfigure}{\textwidth}
         \centering
         \includegraphics[height=0.18\textheight]{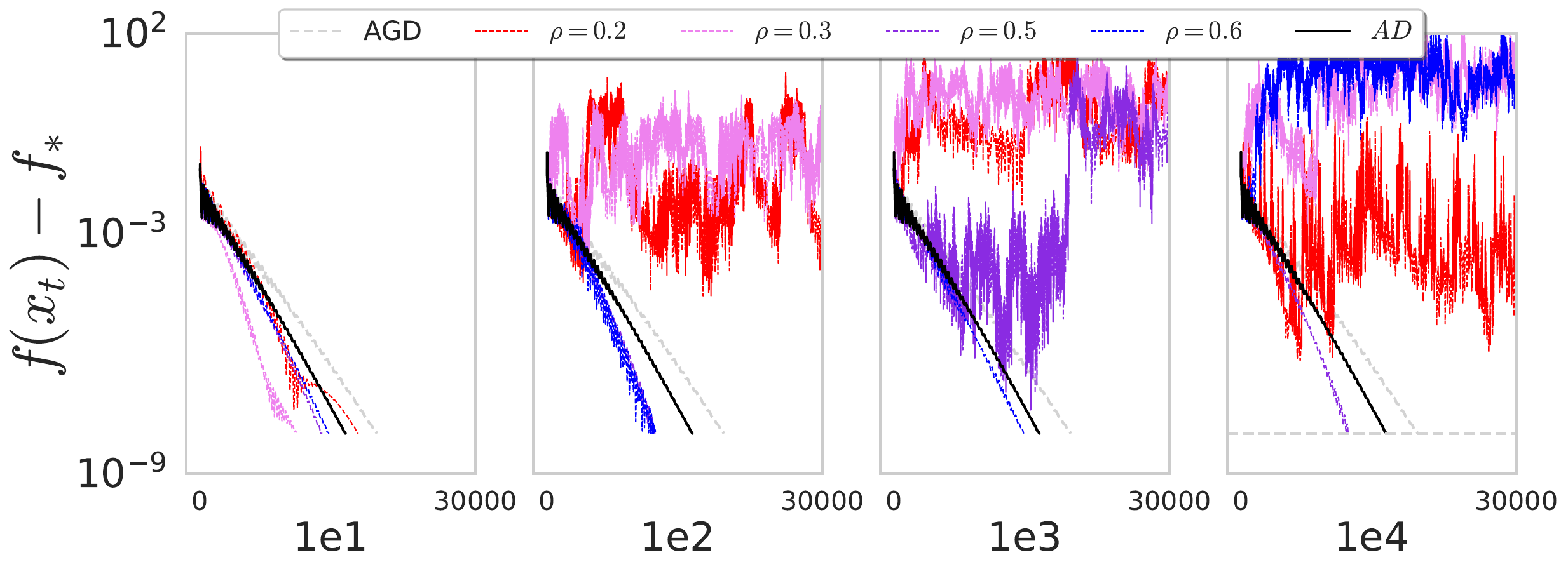}
         \caption{\textsc{covtype}.}
         \label{fig:logreg_covtype_agd}
    \end{subfigure}

    \begin{subfigure}{\textwidth}
         \centering
         \includegraphics[height=0.18\textheight]{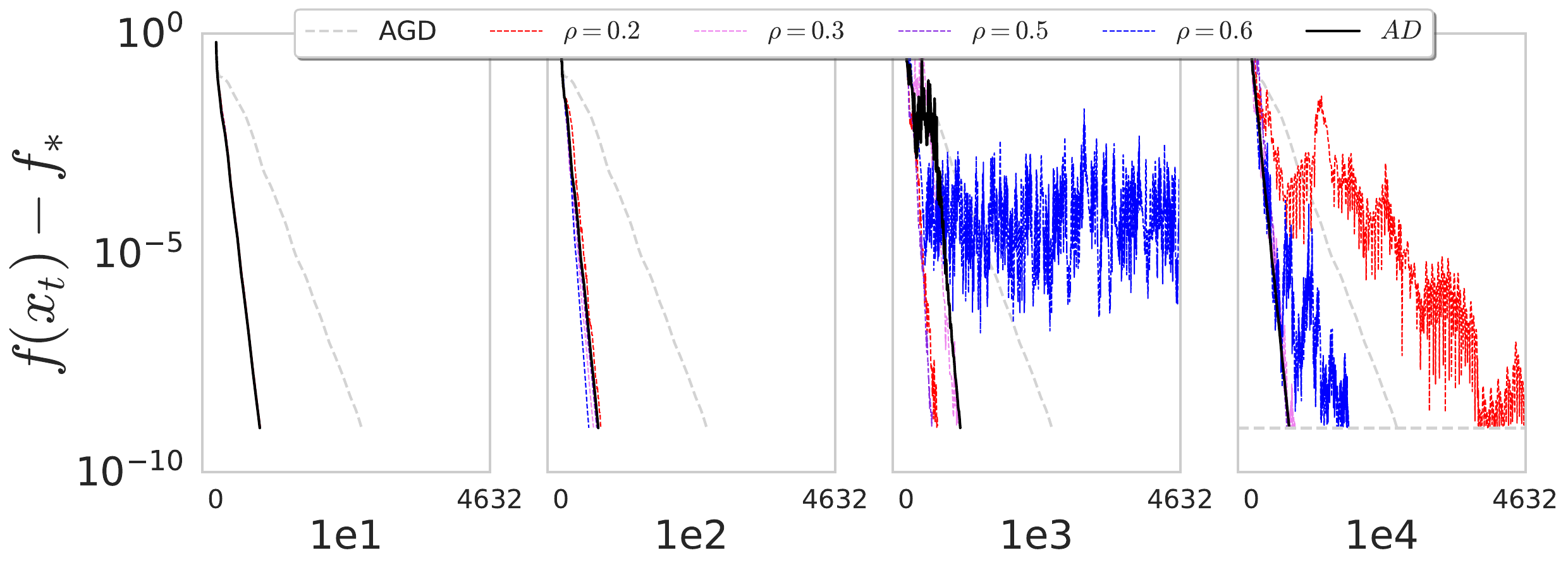}
         \caption{\textsc{gisette\_scale}.}
         \label{fig:logreg_gisette_agd}
    \end{subfigure}

    \begin{subfigure}{\textwidth}
         \centering
         \includegraphics[height=0.18\textheight]{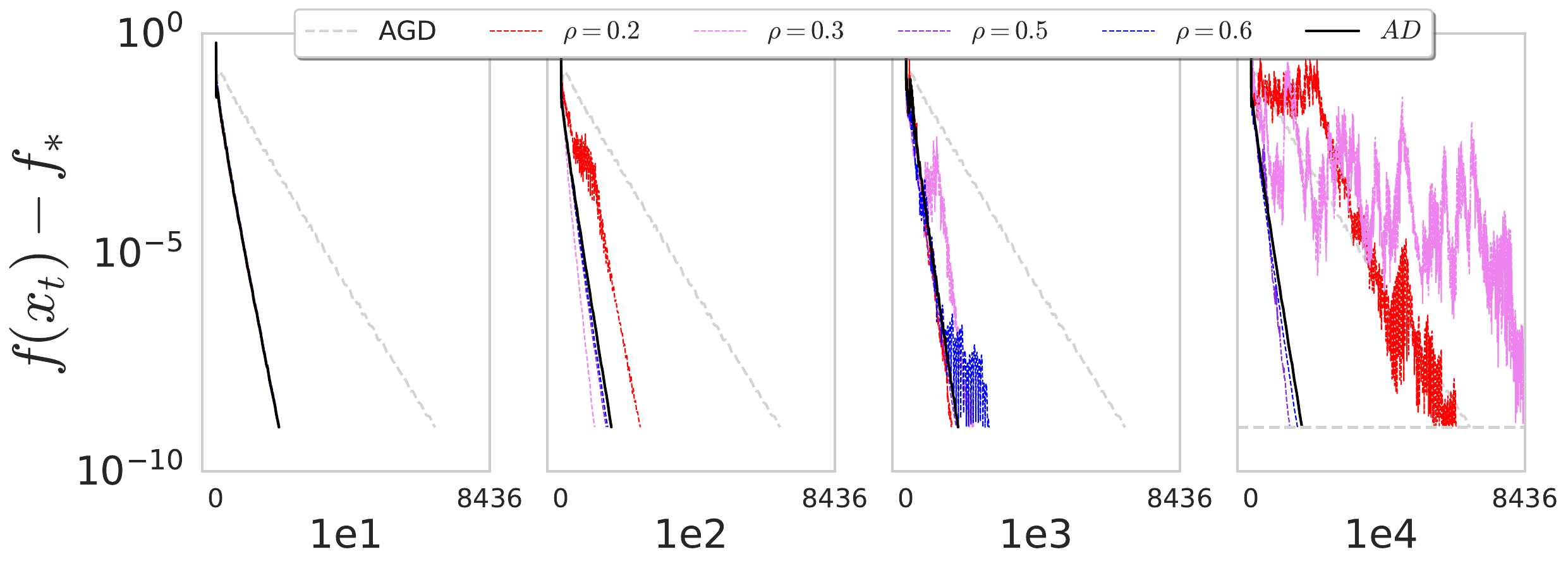}
         \caption{\textsc{MNIST}.}
         \label{fig:logreg_MNIST_agd}
    \end{subfigure}

    \caption{Logistic regression on four different datasets and four initial step-sizes \(\alpha_0= \{\num{1e1}, \num{1e2}, \num{1e3}, \num{1e4}\}/\bar{L}\): suboptimality gap for AGD, AGD with standard memoryless backtracking line search using \(\rho\in \{0.2, 0.3, 0.5, 0.6\}\) and AGD with adaptive memoryless backtracking line search using \(\rho = 0.9\). The light gray horizontal dashed line shows the precision used to compute performance for all methods, \(10^{-9}\).}
    \label{fig:logreg_plots_1/2_agd}
\end{figure}

\begin{figure}[H]
    \begin{subfigure}{\textwidth}
         \centering
         \includegraphics[height=0.18\textheight]{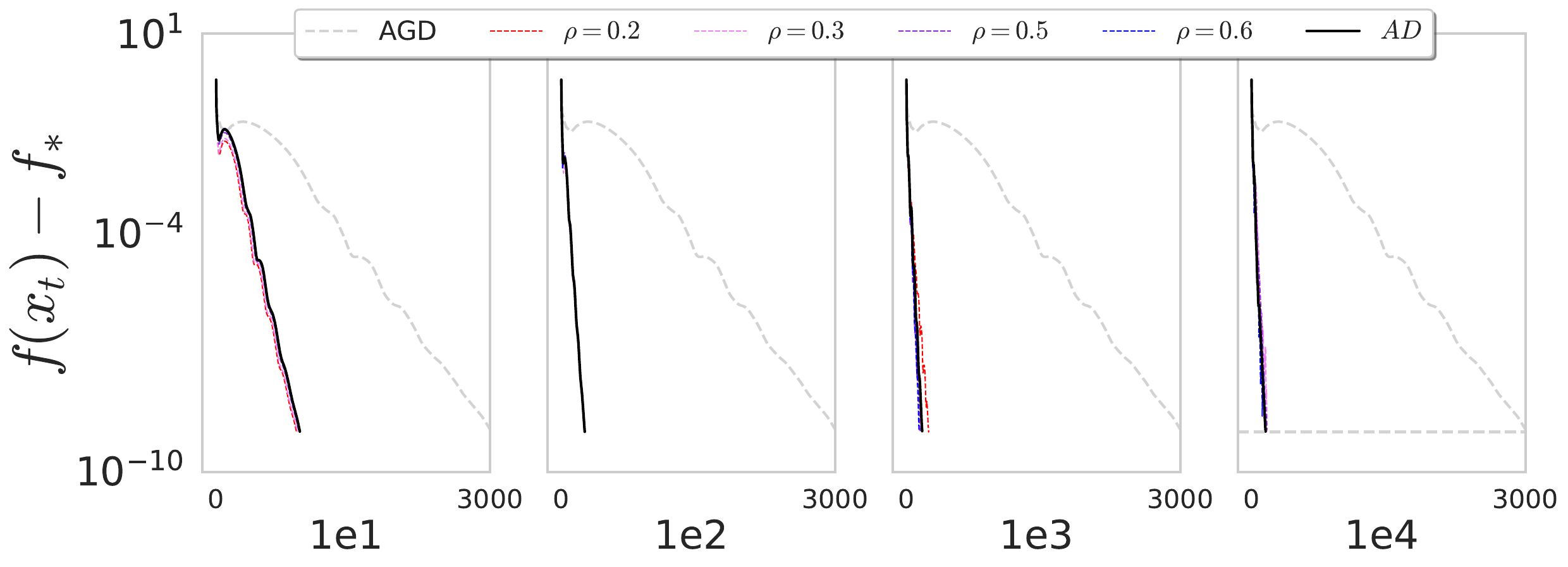}
         \caption{\textsc{mushrooms}.}
         \label{fig:logreg_mushrooms_agd}
    \end{subfigure}

    \begin{subfigure}{\textwidth}
         \centering
         \includegraphics[height=0.18\textheight]{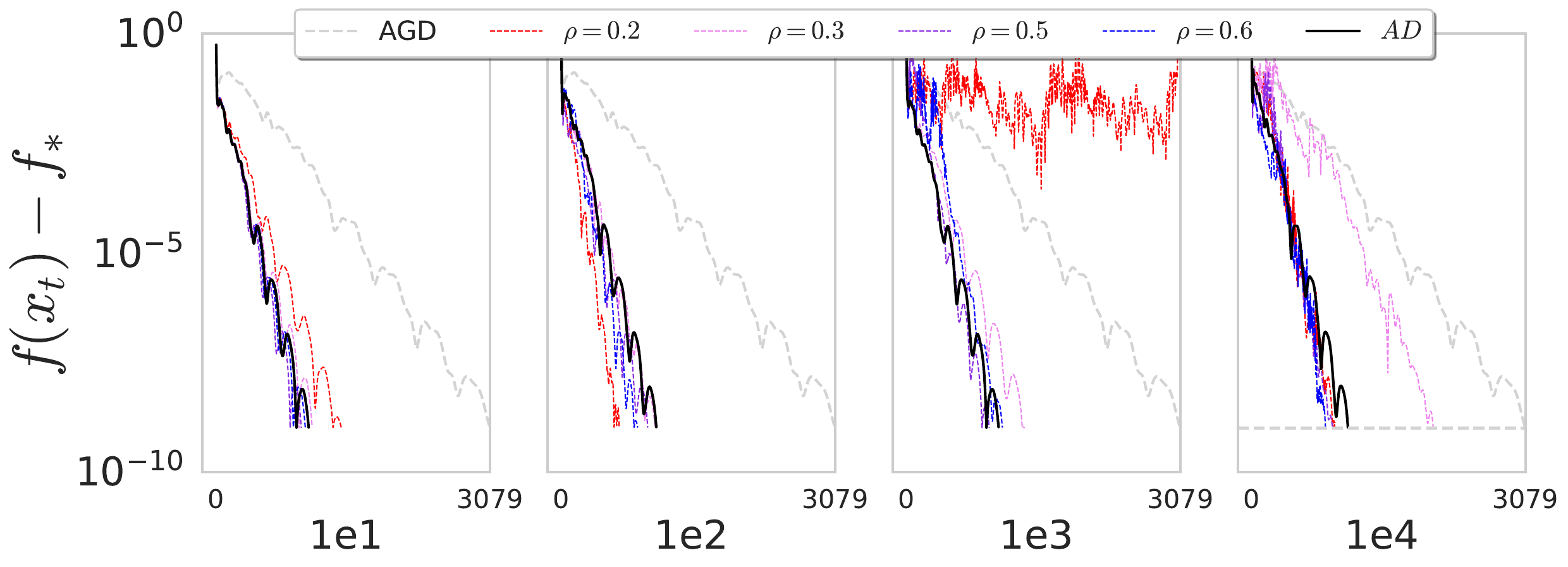}
         \caption{\textsc{phishing}.}
         \label{fig:phishing_agd}
    \end{subfigure}

    \begin{subfigure}{\textwidth}
         \centering
         \includegraphics[height=0.18\textheight]{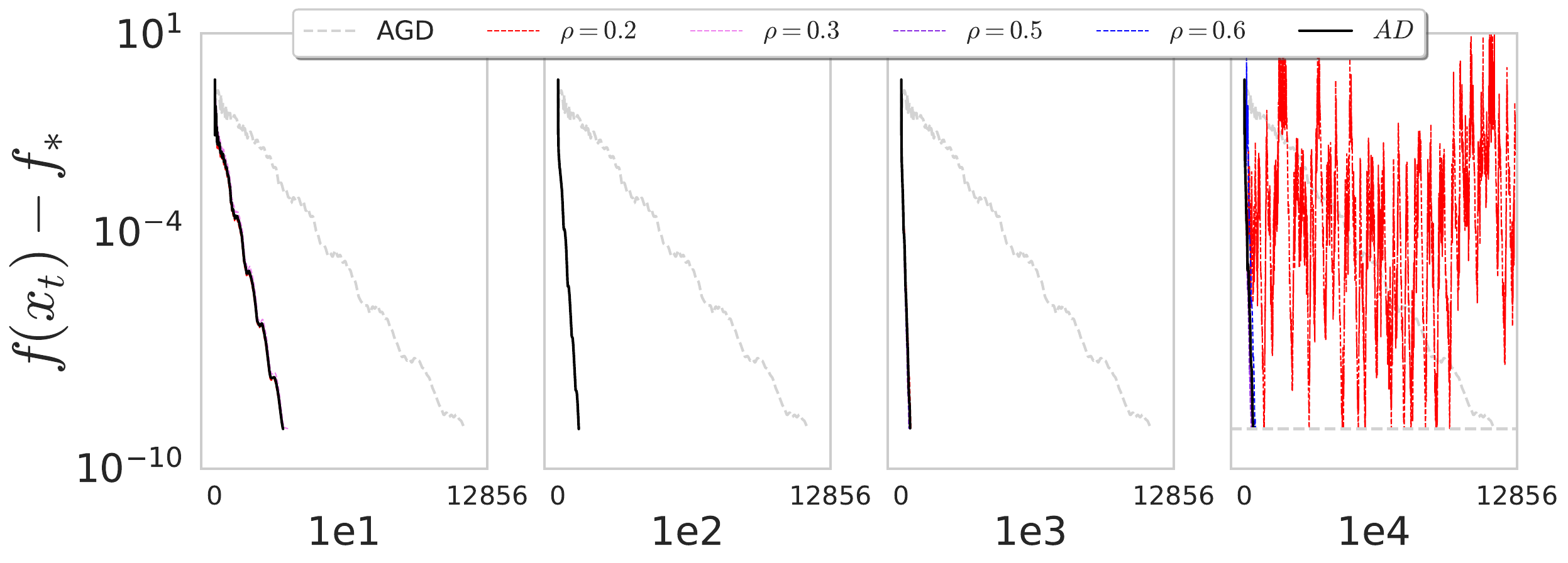}
         \caption{\textsc{protein}.}
         \label{fig:logreg_protein_agd}
    \end{subfigure}

    \begin{subfigure}{\textwidth}
         \centering
         \includegraphics[height=0.18\textheight]{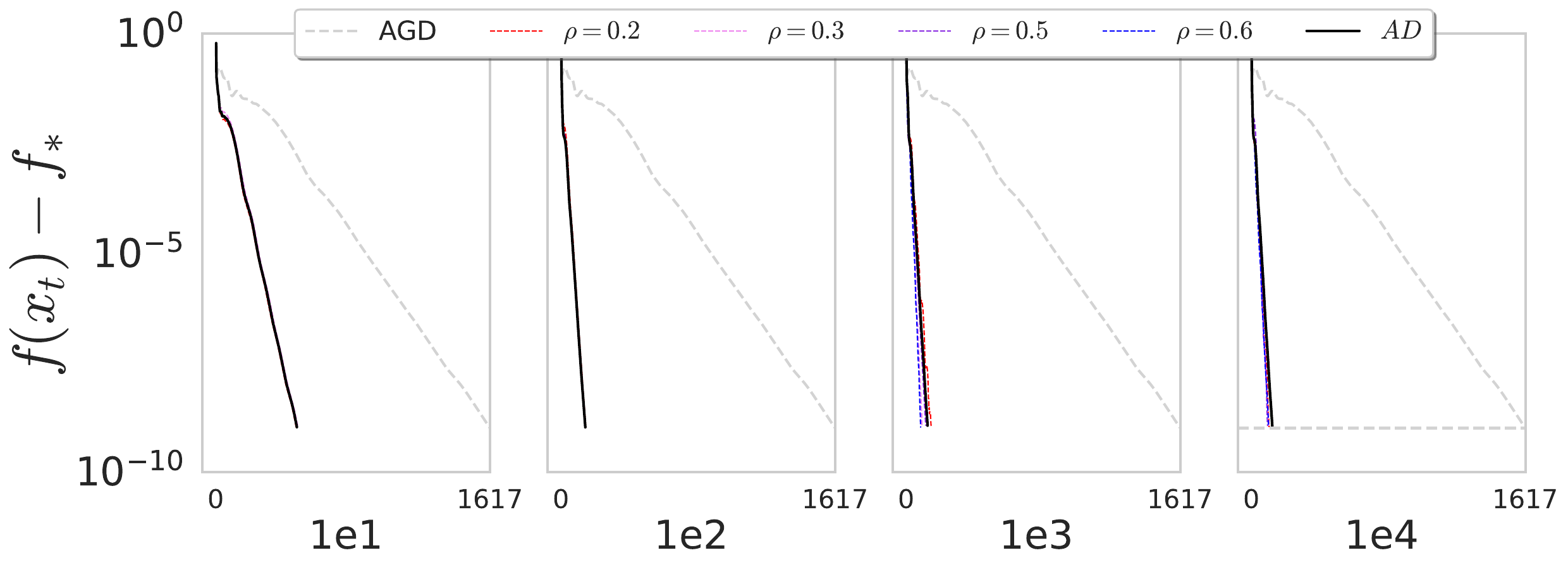}
         \caption{\textsc{web-1}.}
         \label{fig:logreg_web-1_agd}
    \end{subfigure}

    \caption{Logistic regression on four different datasets and four initial step-sizes \(\alpha_0= \{\num{1e1}, \num{1e2}, \num{1e3}, \num{1e4}\}/\bar{L}\): suboptimality gap for AGD, AGD with standard memoryless backtracking line search using \(\rho\in \{0.2, 0.3, 0.5, 0.6\}\) and AGD with adaptive memoryless backtracking line search using \(\rho = 0.9\). The light gray horizontal dashed line shows the precision used to compute performance for all methods, \(10^{-9}\).}
    \label{fig:logreg_plots_2/2_agd}
\end{figure}

\begin{figure}[H]
    \begin{subfigure}{\textwidth}
         \centering
         \includegraphics[height=0.18\textheight]{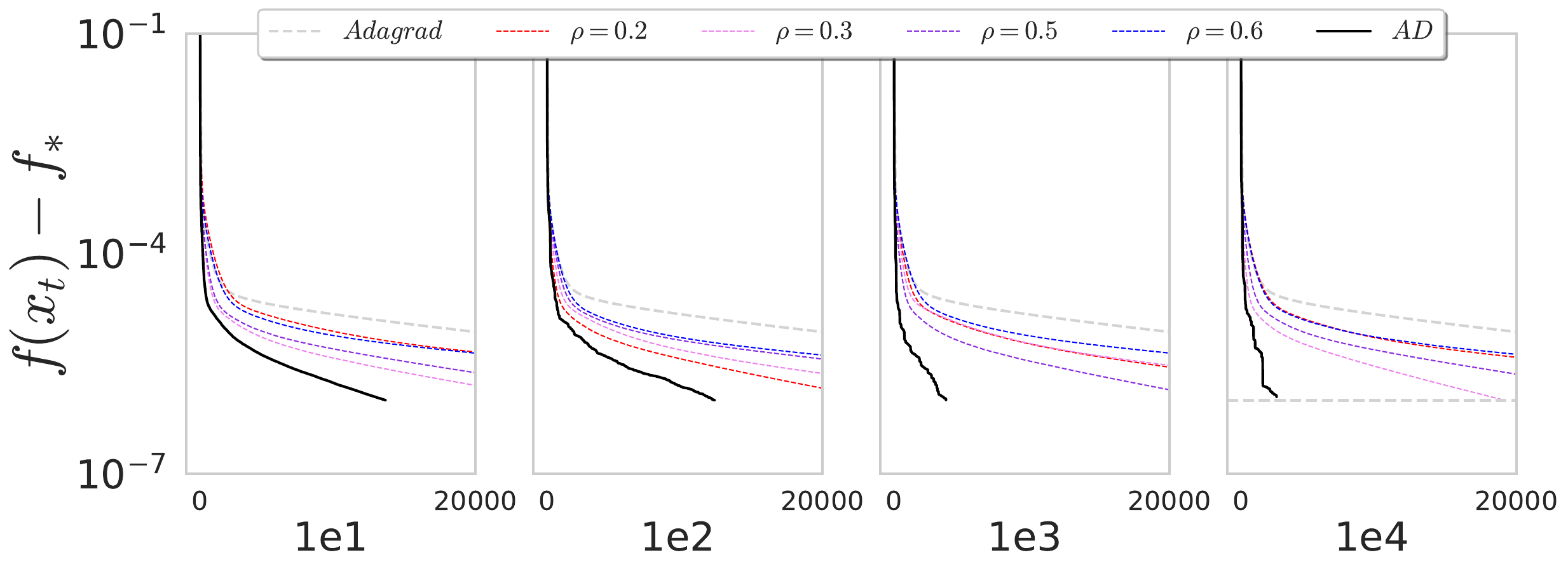}
         \caption{\textsc{adult}.}
         \label{fig:logreg_adult_adagrad}
    \end{subfigure}

    \begin{subfigure}{\textwidth}
         \centering
         \includegraphics[height=0.18\textheight]{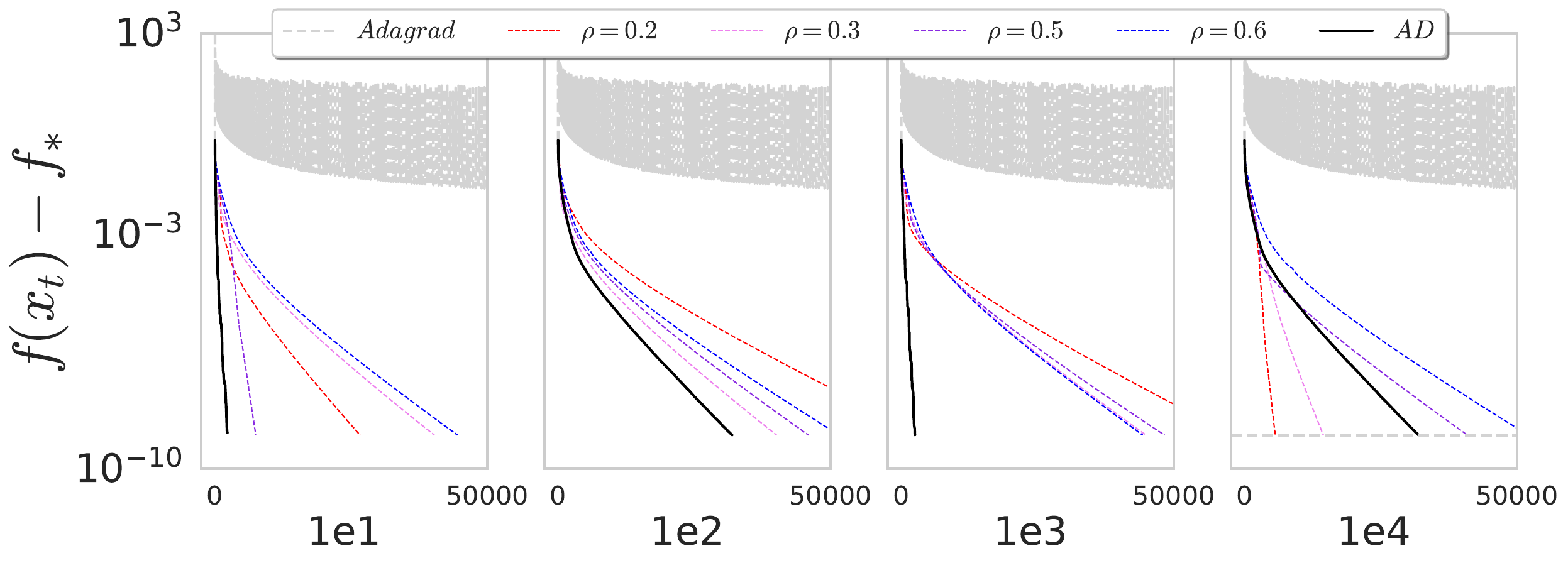}
         \caption{\textsc{gisette\_scale}.}
         \label{fig:logreg_gisette_adagrad}
    \end{subfigure}

    \begin{subfigure}{\textwidth}
         \centering
         \includegraphics[height=0.18\textheight]{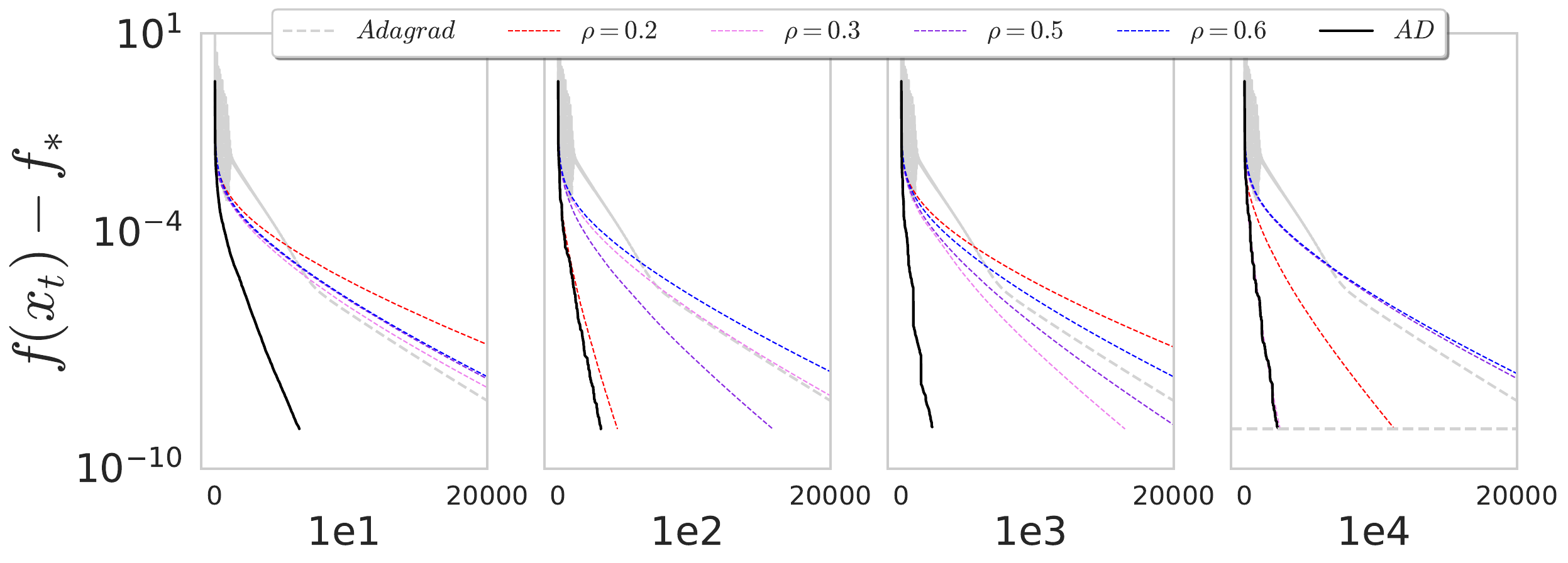}
         \caption{\textsc{MNIST}.}
         \label{fig:logreg_MNIST_adagrad}
    \end{subfigure}

    \begin{subfigure}{\textwidth}
         \centering
         \includegraphics[height=0.18\textheight]{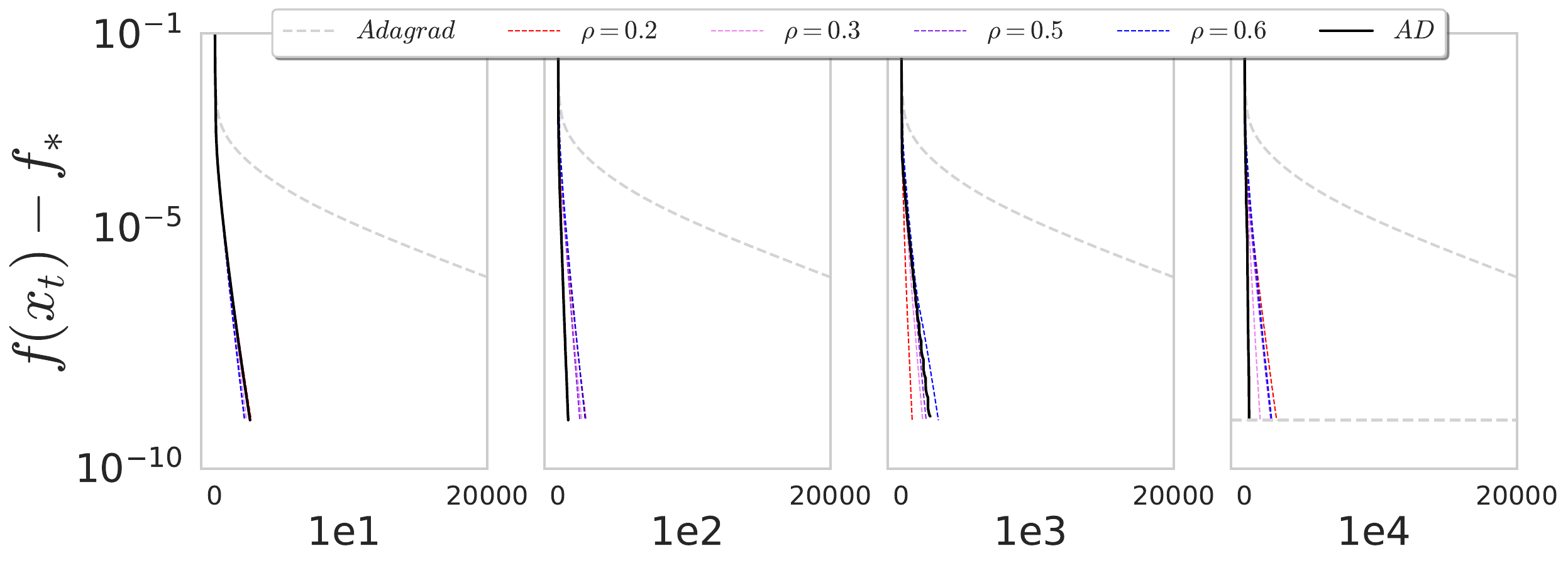}
         \caption{\textsc{mushrooms}.}
         \label{fig:logreg_mushrooms_adagrad}
    \end{subfigure}

    \caption{Logistic regression on four different datasets and four initial step-sizes \(\alpha_0= \{\num{1e1}, \num{1e2}, \num{1e3}, \num{1e4}\}/\bar{L}\): suboptimality gap for Adagrad, Adagrad with standard memoryless backtracking line search using \(\rho\in \{0.2, 0.3, 0.5, 0.6\}\) and Adagrad with adaptive memoryless backtracking line search using \(\rho = 0.3\). The light gray horizontal dashed line shows the precision used to compute performance for each dataset.}
    \label{fig:logreg_plots_1/2_adagrad}
\end{figure}

\begin{figure}[H]
    \begin{subfigure}{\textwidth}
         \centering
         \includegraphics[height=0.18\textheight]{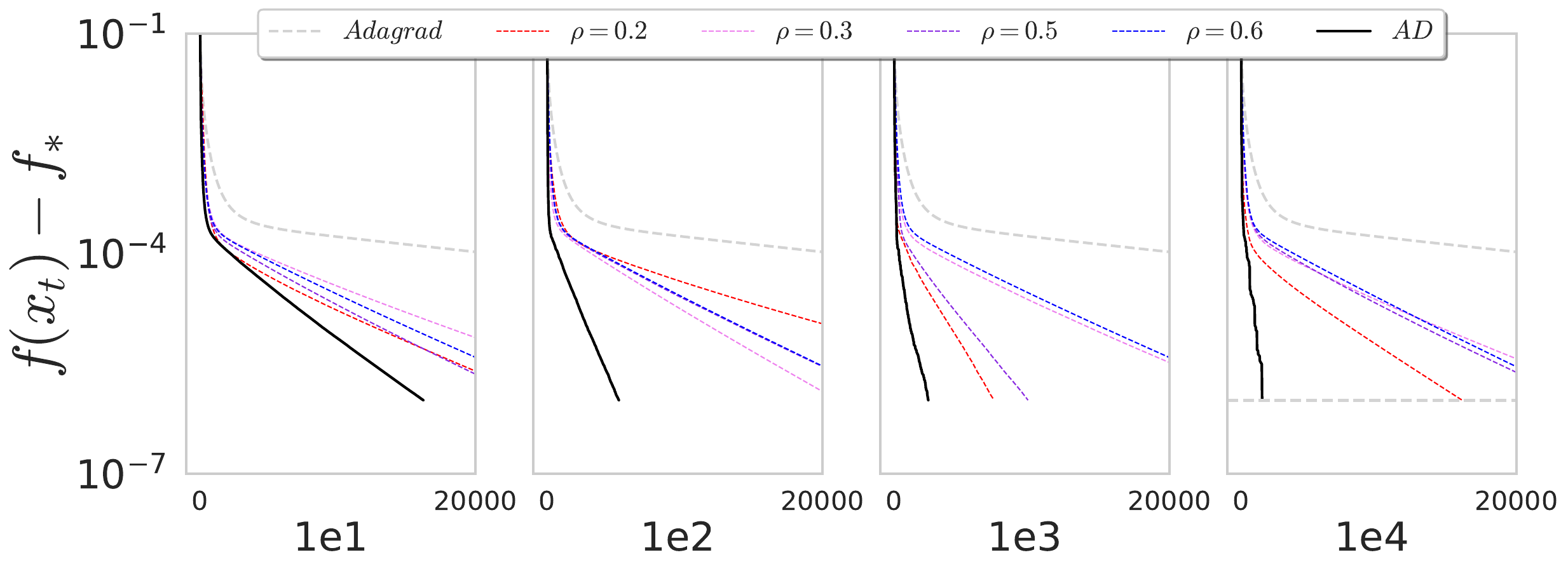}
         \caption{\textsc{phishing}.}
         \label{fig:logreg_adult_phising_ada}
    \end{subfigure}

    \begin{subfigure}{\textwidth}
         \centering
         \includegraphics[height=0.18\textheight]{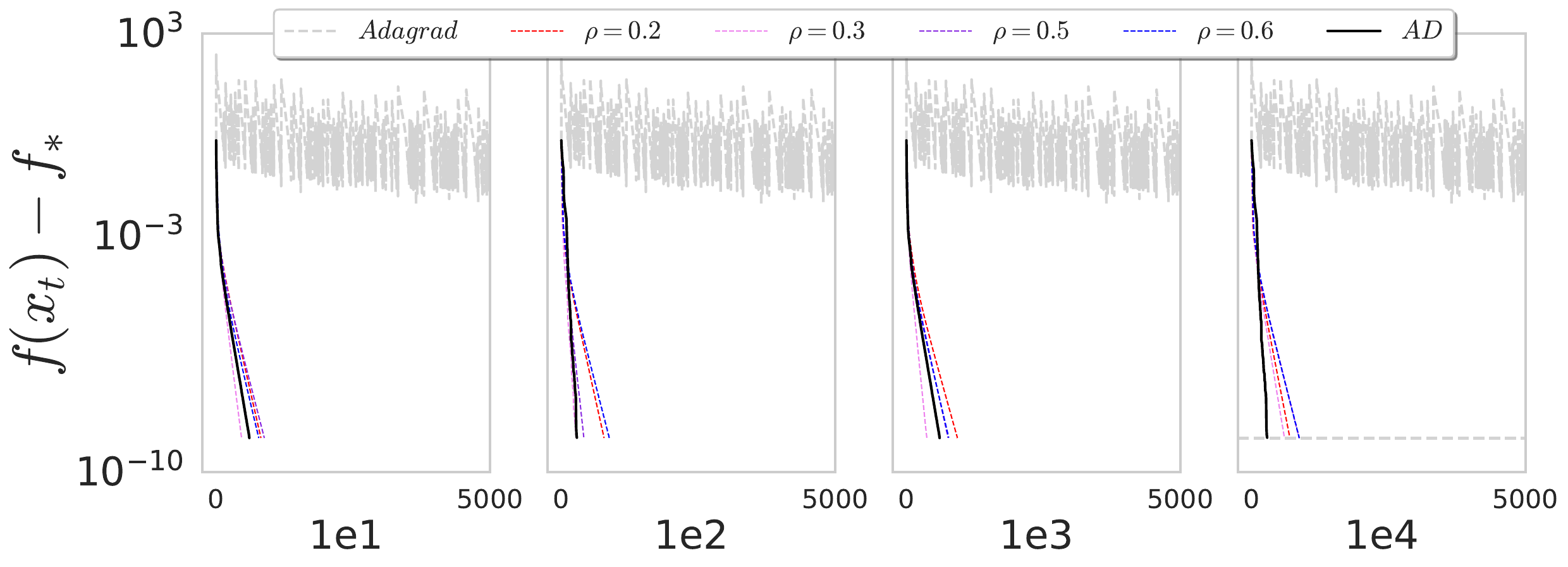}
         \caption{\textsc{protein}.}
         \label{fig:logreg_protein_adagrad}
    \end{subfigure}

    \begin{subfigure}{\textwidth}
         \centering
         \includegraphics[height=0.18\textheight]{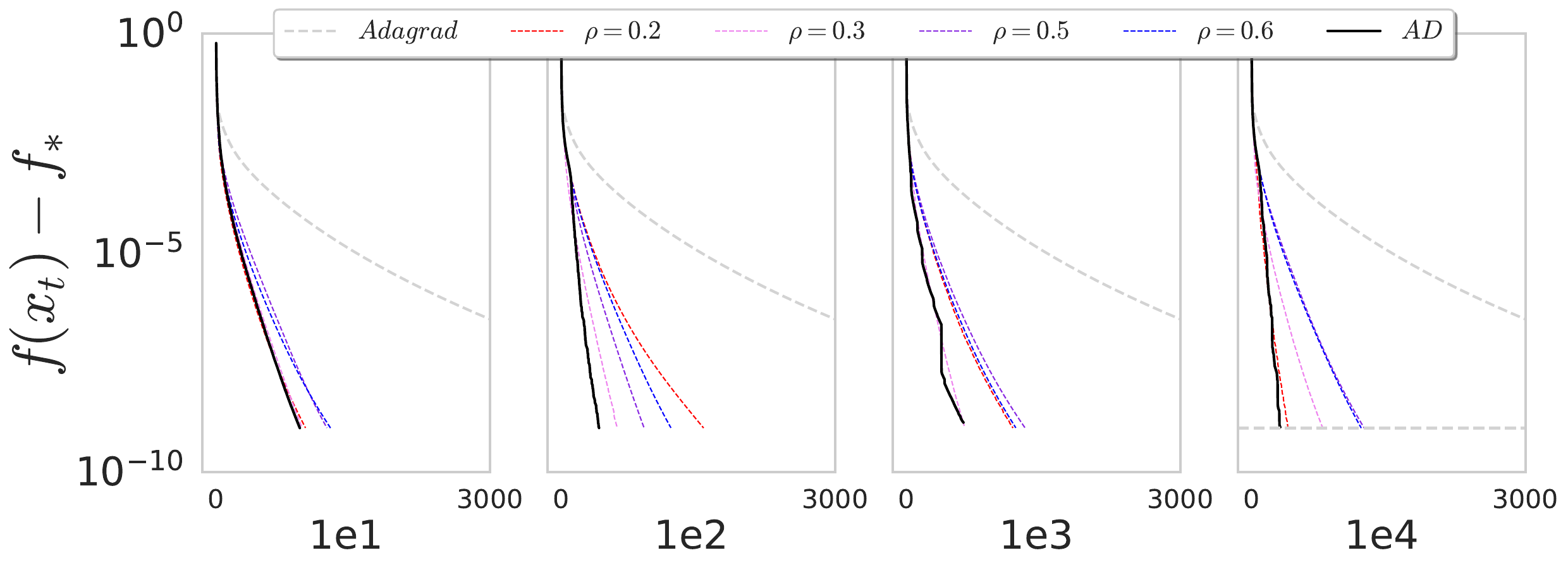}
         \caption{\textsc{web-1}.}
         \label{fig:logreg_web-1_adagrad}
    \end{subfigure}

    \caption{Logistic regression on four different datasets and four initial step-sizes \(\alpha_0= \{\num{1e1}, \num{1e2}, \num{1e3}, \num{1e4}\}/\bar{L}\): suboptimality gap for Adagrad, Adagrad with standard memoryless backtracking line search using \(\rho\in \{0.2, 0.3, 0.5, 0.6\}\) and Adagrad with adaptive memoryless backtracking line search using \(\rho = 0.3\). The light gray horizontal dashed line shows the precision used to compute performance for each dataset.}
    \label{fig:logreg_plots_2/2_adagrad}
\end{figure}

\newpage
\section{Linear inverse problems}
\label{sec:app-lin-inv}



\noindent {\bf Dataset details.}
We consider \(A\) observations from eight datasets: \textsc{iris}, \textsc{digits}, \textsc{wine}, \textsc{olivetti\_faces} and \textsc{lfw\_pairs} from scikit-learn \citep{Pedregosa2011}, \textsc{spear3} and \textsc{spear10} \citep{Lorenz2014} and \textsc{sparco} \citep{vandenBerg2007}.
For multi-class datasets, the first two are considered.
The number of datapoints and dimensions of each dataset can be found on \cref{tab:app-lin-inv},

\begin{table}[H]
    \centering
    \caption{Details of FISTA experiments.}
    \begin{tabular}{lrrcc}
        \toprule
        \textbf{dataset} & \textbf{datapoints} & \textbf{dimensions} & \(\lambda\) & \(L_{0}\)\\ 
        \hline
        digits & 360 & 64 & \(\num{1e-1}\) & \(\num{1e0}, \num{1e1}, \num{1e2}, \num{1e3}\) \\
        iris & 100 & 4 & \(\num{1e-2}\) & \(\num{1e-1}, \num{1e0}, \num{1e1}, \num{1e2}\) \\
        lfw{\_}pairs & 2200 & 5828 & \(\num{1e0}\) & \(\num{1e-3}, \num{1e-2}, \num{1e-1}, \num{1e0}\) \\
        olivetti{\_}faces & 20 & 4096 & \(\num{1e-2}\) & \(\num{1e0}, \num{1e1}, \num{1e2}, \num{1e3}\) \\
        Spear3 & 512 & 1024 & \(\num{1e-1}\) & \(\num{1e-3}, \num{1e-2}, \num{1e-1}, \num{1e0}\) \\
        Spear10 & 512 & 1024 & \(\num{1e-2}\) & \(\num{1e-3}, \num{1e-2}, \num{1e-1}, \num{1e0}\) \\
        Sparco3 & 1024 & 2048 & \(\num{1e-2}\) & \(\num{1e-3}, \num{1e-2}, \num{1e-1}, \num{1e0}\) \\
        wine & 130 & 13 & \(\num{1e-2}\) & \(\num{1e0}, \num{1e1}, \num{1e2}, \num{1e3}\) \\
        \bottomrule
    \end{tabular}
    \label{tab:app-lin-inv}
\end{table}

\begin{figure}[h]
    \begin{subfigure}{\textwidth}
         \centering
         \includegraphics[height=0.18\textheight]{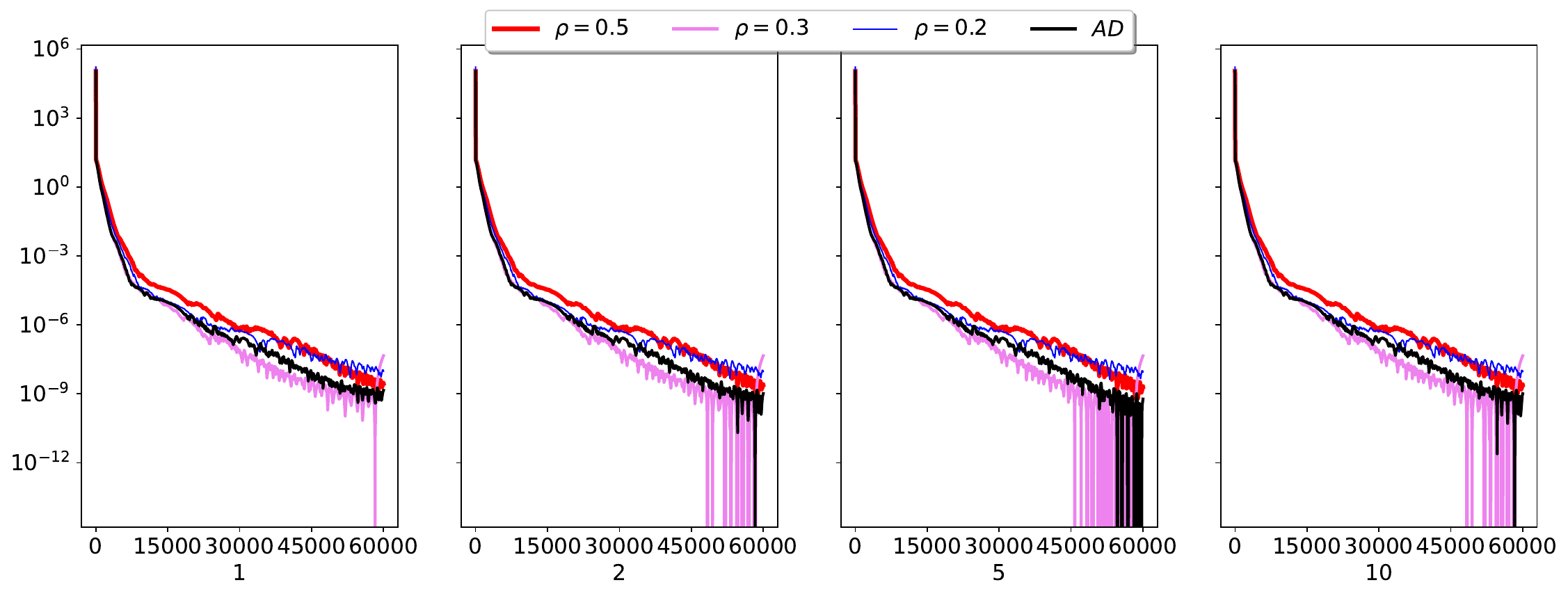}
         \caption{\textsc{synthetic}.}
         \label{fig:fista_synthetic_L0_ablation}
    \end{subfigure}

    \begin{subfigure}{\textwidth}
         \centering
         \includegraphics[height=0.18\textheight]{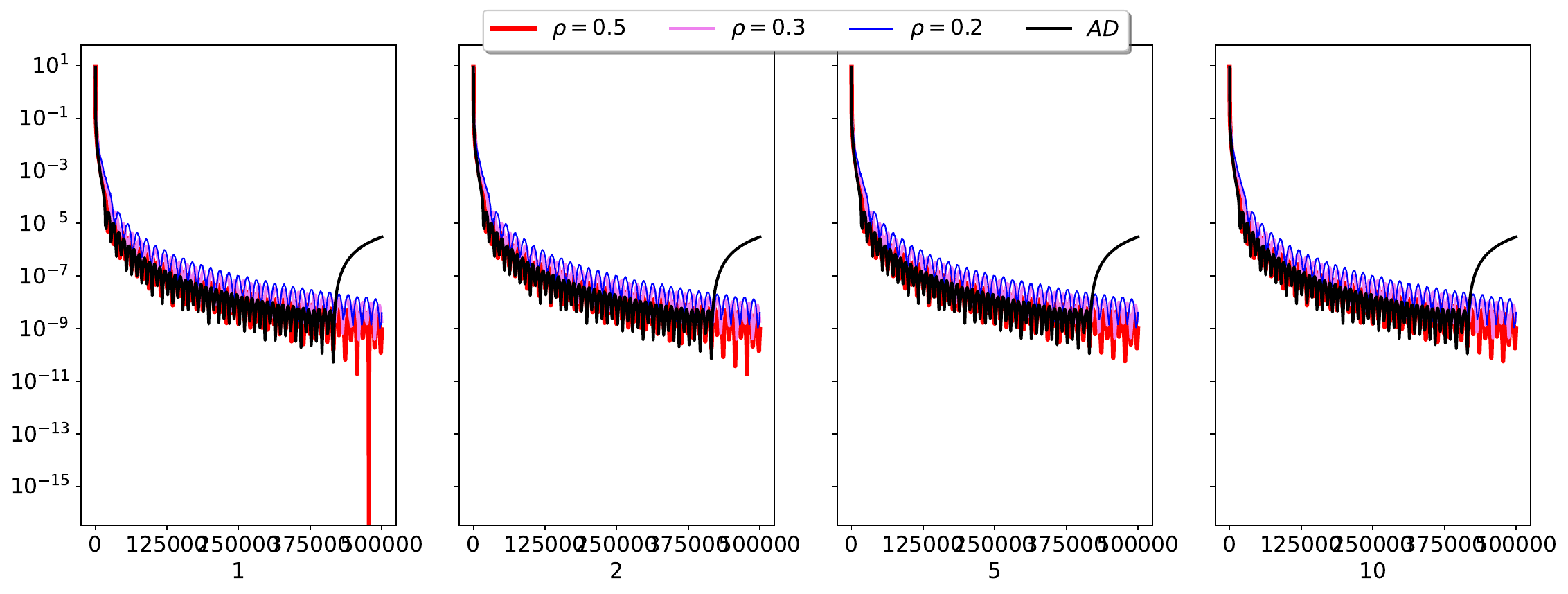}
         \caption{\textsc{olivetti\_faces}.}
         \label{fig:fista_olivetti_faces_L0_ablation}
    \end{subfigure}
    \caption{}
    
    \label{fig:fista_set2_L0_ablations}
\end{figure}

\begin{figure}
    \begin{subfigure}{\textwidth}
         \centering
         \includegraphics[height=0.18\textheight]{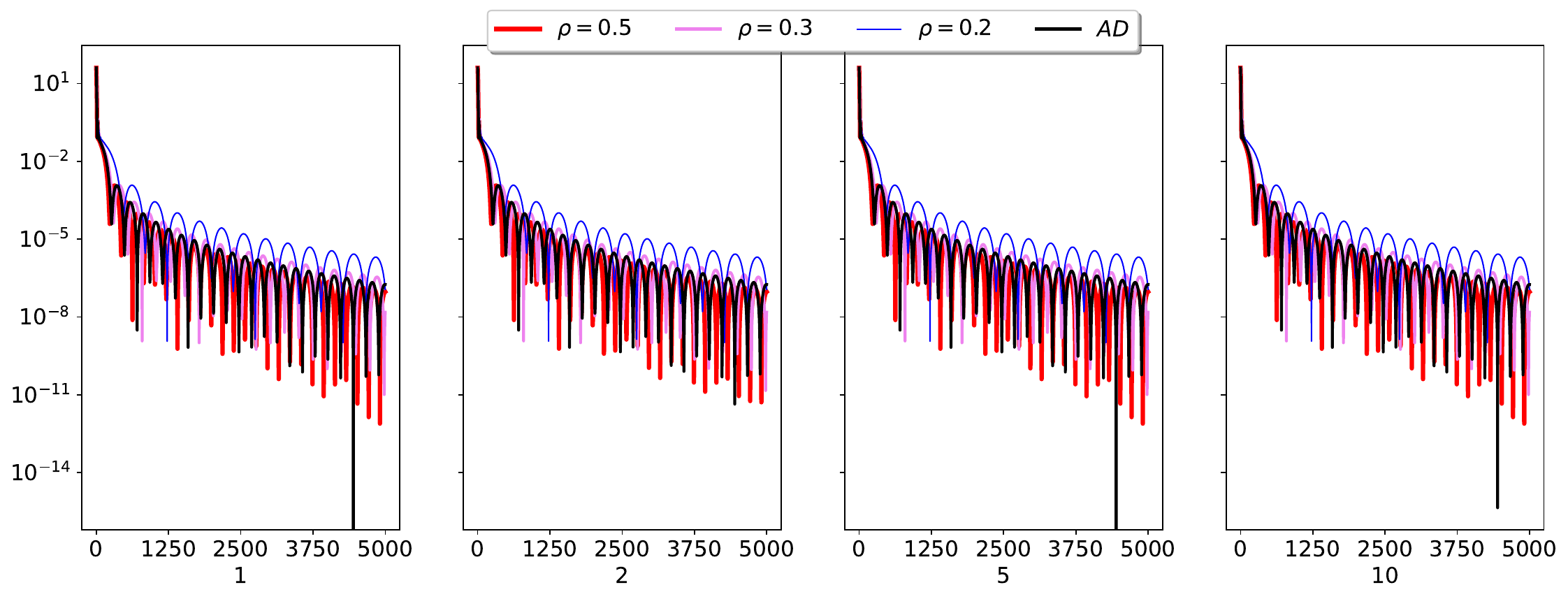}
         \caption{\textsc{iris}.}
         \label{fig:fista_iris_L0_ablation}
    \end{subfigure}

    \begin{subfigure}{\textwidth}
         \centering
         \includegraphics[height=0.18\textheight]{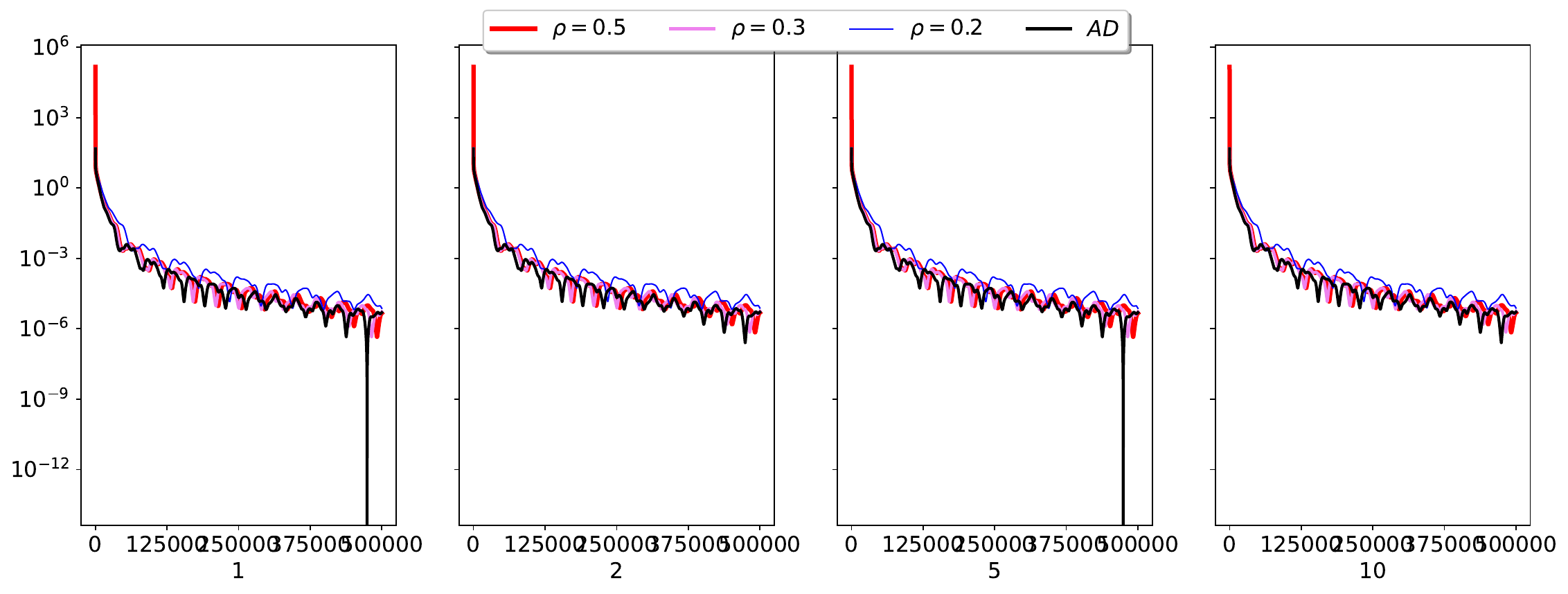}
         \caption{\textsc{wine}.}
         \label{fig:fista_wine_L0_ablation}
    \end{subfigure}

    \begin{subfigure}{\textwidth}
         \centering
         \includegraphics[height=0.18\textheight]{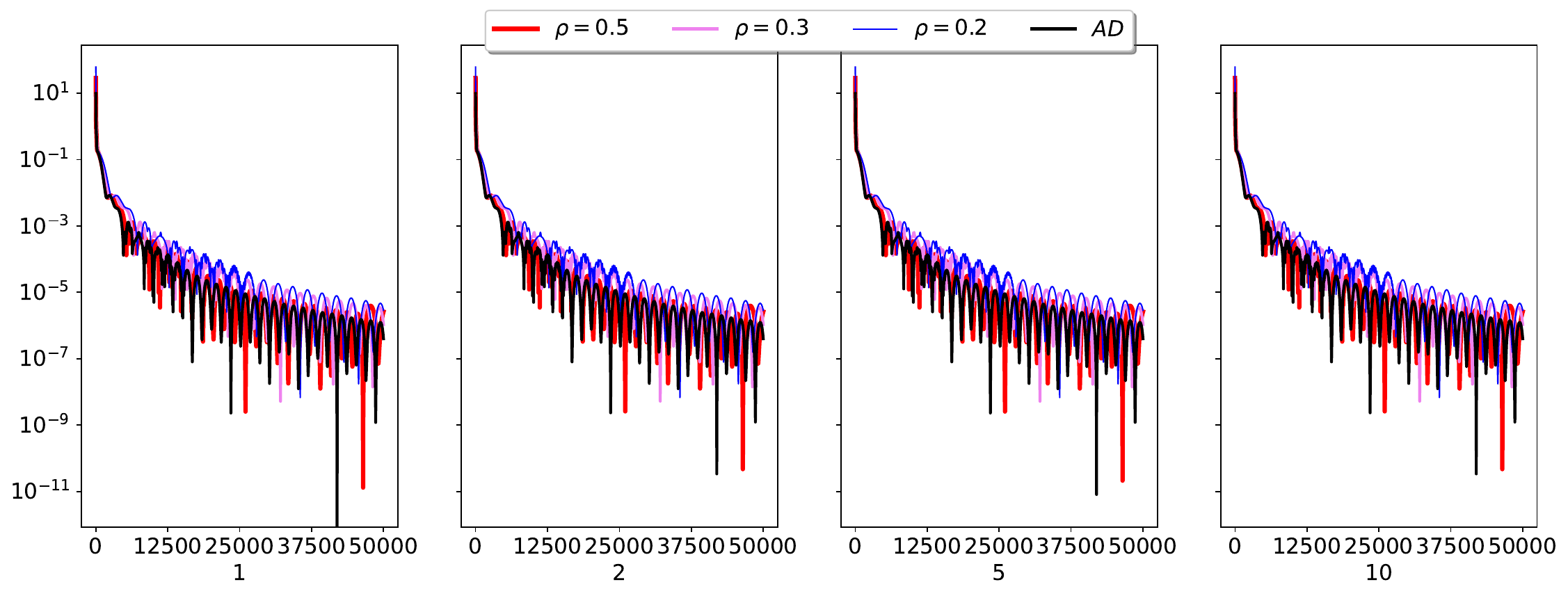}
         \caption{\textsc{digits}.}
         \label{fig:fista_digits_L0_ablation}
    \end{subfigure}

    \caption{}
    \label{fig:fista_set1_L0_ablations}
\end{figure}

\newpage
\section{Matrix Factorization Experiments}
\label{sec:app-matrix-factorization}

We sample \(A\) from the file ``u.data'', part of the MovieLens 100K dataset (\url{grouplens.org/datasets/movielens/100k/}).
Moreover, we choose the precision representing a reduction to \(\num{1e-12}\) in the suboptimality gap, which corresponds to a lower bound of \(\num{1e-5}\) as the initial objective values typically hover around \(\num{1e7}\).

\begin{figure}[H]
    \begin{subfigure}{\textwidth}
         \centering
         \includegraphics[width=0.75\textwidth]{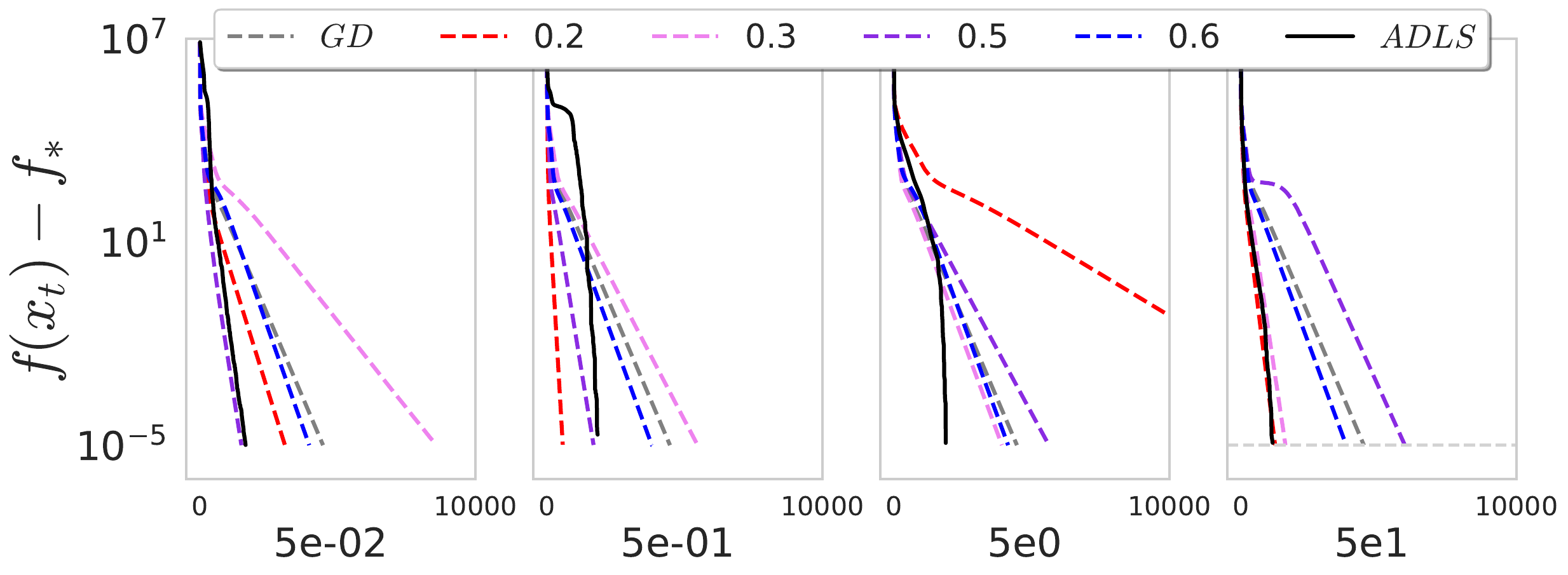}
         \caption{Rank 10.}
         \label{fig:mf_0_10}
    \end{subfigure}

    \begin{subfigure}{\textwidth}
         \centering
         \includegraphics[width=0.75\textwidth]{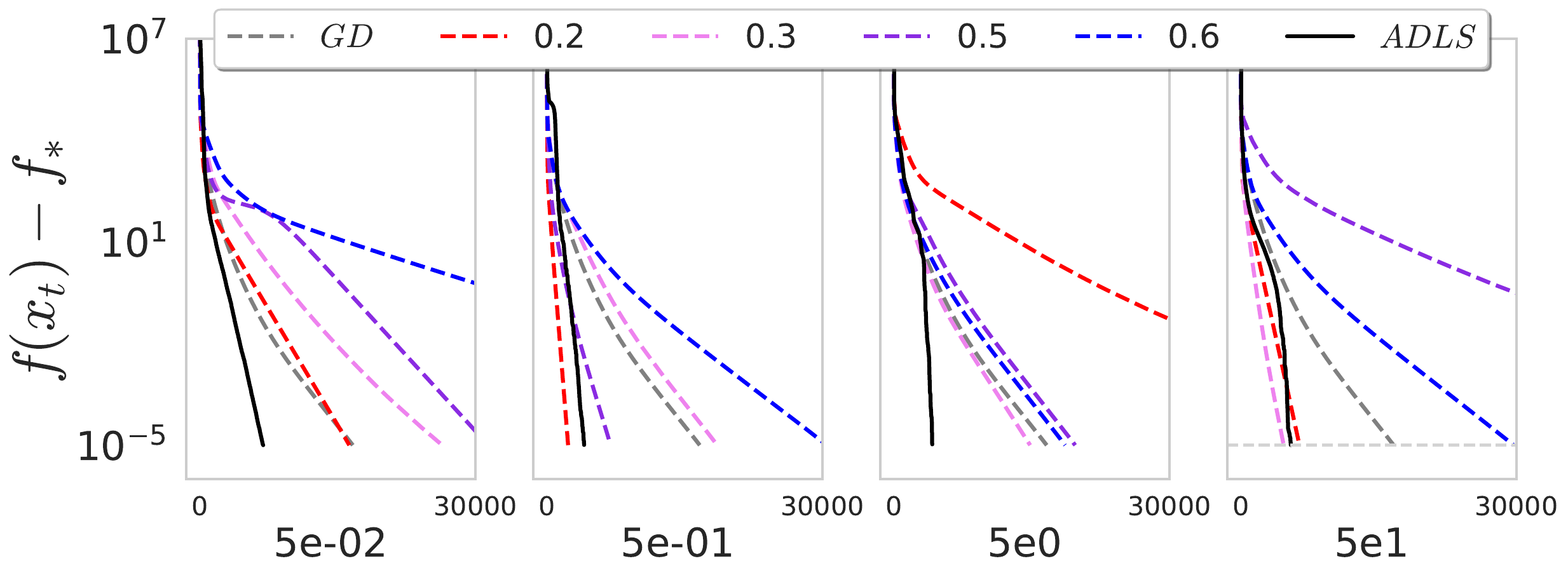}
         \caption{Rank 20.}
         \label{fig:mf_0_20}
    \end{subfigure}

    \begin{subfigure}{\textwidth}
         \centering
         \includegraphics[width=0.75\textwidth]{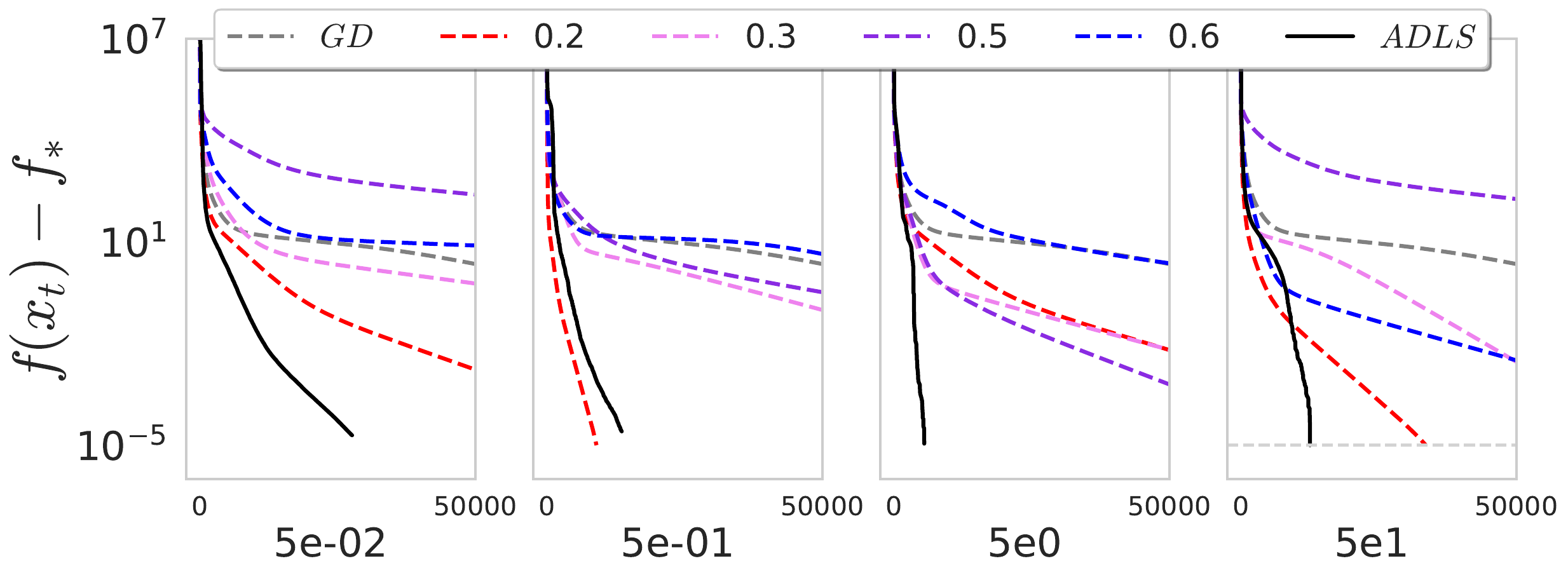}
         \caption{Rank 30.}
         \label{fig:mf_0_30}
    \end{subfigure}
    
    \caption{Matrix factorization and three values of rank: suboptimality gap for gradient descent, gradient descent with standard backtracking line search using \(\rho\in \{0.2, 0.3, 0.5, 0.6\}\) and gradient descent with adaptive backtracking line search using \(\rho = 0.3\). The light gray horizontal dashed line shows the precision used to compute performance for all methods, \(10^{-5}\).}
\end{figure}

\end{document}